\newcommand{\overbar}[1]{\mkern 0.5mu\overline{\mkern-0.5mu#1\mkern-2.0mu}\mkern 2.0mu}
\newcommand{\be}{\begin{equation}}
 \newcommand{\func}[1]{\mbox{\rm #1}\,}
\newcommand{\la}{\left\langle  }
\newcommand{\ra}{\right\rangle  }
\providecommand{\{}{\left\{}
\providecommand{\}}{\right\}}
\providecommand{\norm}[1]{\lVert#1\rVert_{\infty}}
\providecommand{\normt}[1]{\lVert#1\rVert}
\providecommand{\normo}[1]{\lVert#1\rVert_{1}}
\providecommand{\law}{\leftarrow}
\newcommand{\ee}{\end{equation}}
\newtheorem{asump}{Assumption}{\bf}{\it}
\theoremstyle{plain}
\newtheorem{theorem}{Theorem}
\newtheorem{lemma}{Lemma}
\newtheorem{proposition}{Proposition}
\theoremstyle{remark}
\newtheorem*{remark}{Remark}
\theoremstyle{definition}
\newtheorem{definition}{Definition}[section]
\newtheorem{corollary}{Corollary}
\begin{document}
\title{A version of bundle method with linear programming}
\author{Shuai Liu\thanks{Email: \texttt{liushuai04235@gmail.com} Research supported by the Australian Research Council under Discovery Grant DP12100567.}}
\author{Andrew Eberhard\thanks{Email: \texttt{andy.eb@rmit.edu.au} Research supported by the Australian Research Council under Discovery Grant DP12100567.}}
\author{Yousong Luo\thanks{Email: \texttt{yluo@rmit.edu.au}}}
\affil{School of Mathematical and Geospatial Sciences, \\RMIT University, Melbourne, VIC 3001, Australia}
\date{July 7, 2015}
\maketitle
\begin{abstract}
Bundle methods have been intensively studied for solving both convex and nonconvex optimization problems. In most of the bundle methods developed thus far, at least one quadratic programming (QP) subproblem needs to be solved in each iteration. In this paper, we exploit the feasibility of developing a bundle algorithm that only solves linear subproblems. 
We start from minimization of a convex function and show that the sequence of major iterations converge to a minimizer. For nonconvex functions we  consider functions that are locally Lipschitz continuous and prox-regular on a bounded level set, and minimize the cutting-plane model over a trust region with infinity norm. The para-convexity of such functions allows us to use the locally convexified model and its convexity properties. Under some conditions and assumptions, we study the convergence of the proposed algorithm through the outer semicontinuity of the  proximal mapping. Encouraging results of preliminary numerical experiments on standard test sets are provided.
\end{abstract}
\textbf{Keywords} {Nonconvex optimization, \and Nonsmooth optimization, \and Trust region method, \and Linear subproblem, \and Prox-regular}

\maketitle
\section{Introduction}
As a generalization of nonlinear programming (NLP), nonsmooth optimization (NSO) has broader application areas and more theoretical challenges. Traced back to as early as 1962 \cite{Shor19621}, NSO has been studied intensively hitherto with more and more new methods being developed. 
 Generally speaking, methods in unconstrained NSO fall into the following categories: subgradient methods \cite{Shor1985,kiwiel1983aggregate,Bagirov2013}, gradient sampling methods \cite{burke2005robust,Tang2014}, modified Newton \cite{Qi1993} or quasi-Newton \cite{Lewis2013a} methods, proximal point methods \cite{rockafellar1976monotone}, derivative free methods \cite{vaz2007particle}, and bundle methods \cite{Lemarechal1975,Wolfe1975,kiwiel1990proximity,Schramm1992}. NSO has a much broader scope than NLP and theory can be developed for very generic 
 functions classes ie. Lipschitz continuous etc.  So in order to develop more efficient algorithms 
and stronger theoretic results, various special structures of the objective function
are often assumed. Special algorithms have developed, tailored for well-structured problems such as convex composite \cite{jeyakumar1995convex}, partially separable \cite{lukvsan2006variable}, etc.  
  Bundle method is probably the most intensively researched methods for nonsmooth optimization. It was created independently by Claude Lemar\'{e}chal \cite{Lemarechal1975} and Philip Wolfe \cite{Wolfe1975} in 1975. Since then a great number of variants of bundle methods have been developed, such as proximal bundle \cite{kiwiel1990proximity,feltenmark2000dual}, trust region bundle \cite{Schramm1992,dempe2001bundle}, splitting bundle \cite{Fuduli2013}, and redistributed bundle \cite{Hare2010}.
Bundle methods grew out of cutting plane methods which often showed  
  a great deal of instability (see page 276 of \cite{Hiriart-Urruty1993a}). To correct this a better approximation of the whole subdifferential is made and stabilization of the descent step is also incorporated. 
  
Consider any optimization problems of the form 
\be
\label{p} \underset{x\in\mathbbm{R}^n}{\operatorname{minimize}}\quad f(x)
\ee
where the objective function $f:\mathbbm{R}^n\rightarrow \mathbbm{R}$ is locally Lipschitz continuous. The traditional bundle method solves a parametric quadratic subproblem in each iteration to obtain a search direction with possible options to follow with a line search. The advantage of employing a quadratic subproblem is that the optimal solution is unique due to the strict convexity, furthermore, the solution can be explicitly expressed in terms of the current iteration point and an average of some subgradients. 
However this convenience must be weighed up against the inconvenience  of having 
to solve a QP at each iteration which can be time-consuming, especially for large scale problems.  One of the motivations of this paper is to overcome the necessity of solving a QP
 in each iteration. Hence, we propose a new version of bundle algorithm for solving \eqref{p} with linear subproblems only. Surprisingly, one finds that even without the explicit expression of the
  solution, and even without line search, our algorithm can still converge to stationary points for nonconvex problems. 
  
  Our approach uses two key tools to achieve this end. First, is a subdifferential
 approximation. Traditional bundle methods use a convex combination of subgradients
(an average) to approximate a selection from an approximation of the whole
subdifferential set (the subgradient selection technique). Traditional subgradient
methods, including gradient sampling methods, use minimum norm subgradient as a
 replacement for the gradient, a calculation involving the solution of a QP. In our method we use a trust
region method based on a linear cutting plane model 
(of a related local convexification of $f$) and in the theoretical analysis we use
 $\frac{f(x)-\bar{f}}{\norm{x-P(x)}}$, where $\bar{f} $ is the minimum value of $f$ and $P(x)$ is the projection of $x$ onto the optimal solution set, as a lower bound on the norm of the minimum norm element of the subdifferential. This alternative for the  subdifferential approximation plays a significant role in our convergence analysis. Secondly, we use a local convexification technique developed in \cite{Hare2010}. For nonconvex functions that are prox-regular and Lipschitz continuous, we show that there exists a number `$a$' such that $f(\cdot)+\frac{a}{2}\|\cdot-\bar{x}\|^2$ is a restriction, to a level set of $f$, of a globally convex function.  Clearly when $f$ is convex 
it suffices to take $a=0$. Unlike \cite{Hare2010}, where the convexification parameter `$a$' is eventually stabilized, we only need `$a$' to be bounded. This allows us to exploit the outer semicontinuity of the subdifferential and that of the associated \emph{proximal mapping}, while traditional bundle methods use the outer semicontinuity of the $\epsilon$-subdifferential.  
   
The use of a linear model in a trust region method is a relatively new idea and our work follows that of \cite{Linderoth2003} where a related approach is used to solve a 
large scale optimization problem that arise out of stochastic programming. 
Standard theory for trust region methods use quadratic model functions. This is usually 
justified via the imposition of sufficient differentiability 
assumption on the objective function. Such logic is 
less compelling in the context of nonsmooth optimization. Furthermore, one usually
associates a bundle--trust--region method  to an approach 
that absorbs the trust region into a quadratic
penalty to control the step length. We instead directly  impose an infinity box norm that is 
handled in more or less a tradition manner for a trust region method. Unlike 
\cite{Linderoth2003} we are able to handle a generic class of prox-regular, 
locally Lipschitz function, greatly extending the applicability of this approach. 

This paper is organized as follows. Section 2 contains the properties of our objective function as preliminary knowledge and Section 3 includes a description of a version of bundle method with linear subproblem for convex optimization. In Section 4 we derive the method for nonconvex optimization and we analyse the convergence of the algorithm in Section 5. Preliminary numerical tests are presented in Section 6.  
   
In this paper, $\normt{\cdot}$, $\normo{\cdot}$ and $\norm{\cdot}$ denote the two norm, one norm and infinity norm, respectively. Denote by  
$ \text{lev}_{ b}f$  the lower level set of $f$ defined by $\{x\in\mathbbm{R}^n \big| f(x)\leq b\}$ and $B(x,\epsilon)$ the closed ball centered at $x$ with radius $\epsilon$.  The convex hull of a set $C\in \mathbbm{R}^n$ is denoted by co $C$, the domain of function $f$ is $\text{dom }f$ and  the interior of a set $C$ is $\text{int }C$.
\section{Properties of the objective function}
For the reader's convenience we collect in this section some standard definitions and properties we will be utilizing in our development. 
\begin{definition}[subdifferential]
Let $f:\mathbbm{R}^n \rightarrow\overbar{\mathbbm{R}}$ be finite at $\bar{x}$.
\begin{enumerate}
\item The set 
\be
\hat{\partial}f(\bar{x})\mathrel{\mathop:}=\left\{s\in\mathbbm{R}^n\big |\ \liminf\limits_{x\rightarrow\bar{x}\atop x\not=\bar{x}}\frac{f(x)-f(\bar{x})- \la  s,x-\bar{x}\ra }{||x-\bar{x}||}\geq 0\right\}\notag
\ee
is called the \emph{Fr\'echet subdifferential} or \emph{regular subdifferential }of $f$ at $\bar{x}$ with elements called \emph{Fr\'echet subgradients} or \emph{regular subgradients }of $f$ at $\bar{x}$.  When $|f(\bar{x})|=\infty$ then $\hat{\partial}f(\bar{x})\mathrel{\mathop:}=\emptyset$.
\item The set $\partial f(\bar{x})\mathrel{\mathop:}=\limsup\limits_{x\rightarrow\bar{x}\atop f(x)\rightarrow f(\bar{x})}\hat{\partial}f(\bar{x})  $ is called the \emph{basic subdifferential }of $f$ at $\bar{x}$ with elements called \emph{basic subgradients} of $f$ at $\bar{x}$. When  
$|f(\bar{x})|=\infty$ then ${\partial}f(\bar{x})\mathrel{\mathop:}=\emptyset$.
\end{enumerate}
\end{definition}

\begin{definition}[prox-regularity,  Definition 13.27,  \cite{Rockafellar1998}]
A function $f:\mathbbm{R}^n \rightarrow\overbar{\mathbbm{R}}$ is \emph{prox-regular} at $ \bar{x} $ for 
$ \bar{v}$ with respect to $\epsilon$ and $a$ if $f$ is finite and locally lower semicontinuous (l.s.c) at $\bar{x}$ with $\bar{v}\in\partial f(\bar{x})$, and there exist $\epsilon > 0$ 
and $a \geq 0$ such that
\be\label{prox-regularInequality}
f(x') \geq f(x) + \la  v,x'-x\ra-\frac{a}{2}
||x' - x||^2 \ \forall\ x'\in B(\bar{x},\epsilon)
\ee
when $||x-\bar{x}||<\epsilon$, $v \in \partial f(x),\ ||v-\bar{v}||<\epsilon,\ f(x) < f(\bar{x}) + \epsilon.$
When this holds for all $\bar{v} \in \partial f(\bar{x})$, $f$ is said to be prox-regular at $\bar{x}$.
\end{definition}
\begin{definition}[para-convexity]\label{paraconvexity}
Given a point $\bar{x}\in\mathbbm{R}^n$ and a real number $\epsilon>0$, a function $f:\mathbbm{R}^n \rightarrow\overbar{\mathbbm{R}}$ is \emph{para-convex} on $B(\bar{x},\epsilon)$ with respect to $a$ if there exists $a\geq 0$ such that the function $f(\cdot)+\frac{a}{2}||\cdot||^2$ is convex on $B(\bar{x},\epsilon)$. 
\end{definition}
\begin{definition}[proximal mapping]\label{proximal}
For a proper, l.s.c. function $f : \mathbbm{R}^n \mapsto \bar{\mathbb{R}}$ and a parameter $a > 0$, the \emph{Moreau envelope function}
$e_ a f$ and \emph{proximal mapping} (or \emph{proximal point mapping}) $P_ a f$ are defined by
\begin{eqnarray}
e_a f(x) := \inf_{w\in\text{dom} f}\{f(w) +\frac{a}{2}\normt{w - x}^2\},\\
\label{pro:map} P_a f(x) := \arg\min_{w\in\text{dom} f}\{f(w) +\frac{a}{2}\normt{w - x}^2\}.
\end{eqnarray}
\end{definition}
If the proper l.s.c function $f$ is bounded from below then $P_{a}f(x)$ is nonempty and compact for all $(x,a)\in\mathbb{R}^n\times \mathbb{R}_{>0}$, and the mapping $\mathbb{R}^n\times \mathbb{R}_{>0}\ni(x,a)\mapsto e_{a}f(x)$ is continuous.
From Definition \ref{proximal} we see that if $p\in P_{a}f(x)$ then $f(p)\leq e_{a}f(x)$ and $a(p-x)\in\partial f(p)$. We also have $e_{a}f(x)\leq f(x)$ for all $x\in\mathbbm{R}^n$ and $e_{a}f(x)= f(x)$ if and only if $x\in P_{a}f(x)$.
\begin{definition}[outer semicontinuity]
A set valued mapping $S:\mathbbm{R}^n\rightrightarrows\mathbbm{R}^m$ is \emph{outer semicontinuous} at $\bar{x}$ if 
\[
\{u\ |\ \exists\ x^j\rightarrow \bar{x},\ \exists\ u^j\rightarrow u\text{ with }u^j\in S(x^j)\}= S(\bar{x}).
\]
\end{definition}
We note that both subdifferential and proximal mapping are outer semicontinuous. Additionally, the mapping $\mathbb{R}^n\times \mathbb{R}_{>0}\ni(x,a)\mapsto P_{a}f(x)$ is outer semicontinuous.

The following proposition is from lemma 2.2 of \cite{Eberhard2001}.
\begin{proposition}\label{prox-para}
If a function $f:\mathbbm{R}^n \rightarrow\overbar{\mathbbm{R}}$ is locally Lipschitz continuous and prox-regular at $\bar{x}$ then there exist $\epsilon$ and `$a$' such that $f$ is para-convex on $B(\bar{x},\epsilon)$ with respect to `$a$'. 
\end{proposition}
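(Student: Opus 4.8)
The plan is to show that the prox-regularity inequality \eqref{prox-regularInequality}, once its quadratic term is absorbed, is exactly the convex subgradient inequality for the augmented function $g:=f(\cdot)+\frac{a}{2}\normt{\cdot}^2$, and then to remove the three ``filters'' ($\normt{v-\bar v}<\epsilon$, $f(x)<f(\bar x)+\epsilon$, and the dependence on a fixed $\bar v$) that prox-regularity attaches to that inequality. First I would record the exact sum rule $\partial g(x)=\partial f(x)+ax$ (valid since $\frac{a}{2}\normt{\cdot}^2$ is smooth with gradient $ax$) and complete the square to verify that, for $w=v+ax\in\partial g(x)$, the inequality $f(x')\geq f(x)+\la v,x'-x\ra-\frac{a}{2}\normt{x'-x}^2$ is equivalent to $g(x')\geq g(x)+\la w,x'-x\ra$. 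Thus it suffices to produce a single radius $\epsilon>0$ and a single constant $a\geq 0$ for which this affine-minorant inequality holds for every $x,x'\in B(\bar x,\epsilon)$ and every $w\in\partial g(x)$.

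Next I would uniformize the prox-regularity parameters over the subdifferential at $\bar x$. Because $f$ is locally Lipschitz, $\partial f(\bar x)$ is nonempty and compact; applying prox-regularity at each $\bar v\in\partial f(\bar x)$ yields radii $\epsilon_{\bar v}$ and constants $a_{\bar v}$, and the balls $B(\bar v,\epsilon_{\bar v}/2)$ cover $\partial f(\bar x)$. Extracting a finite subcover indexed by $\bar v_1,\dots,\bar v_k$ and setting $a:=\max_i a_{\bar v_i}$ and $\epsilon_0:=\min_i \epsilon_{\bar v_i}/2$, I obtain a common constant $a$ and radius $\epsilon_0$: any $v$ lying within $\epsilon_0$ of some point of $\partial f(\bar x)$ lies within $\epsilon_{\bar v_i}$ of one of the centres $\bar v_i$, so the inequality for that index applies, and enlarging the correction constant from $a_{\bar v_i}$ to $a$ only weakens its right-hand side.

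I would then discharge the remaining two filters by continuity. Since the basic subdifferential is outer semicontinuous and, by local Lipschitz continuity, locally bounded, for the $\epsilon_0$ above there is $\eta>0$ with $\partial f(x)\subseteq\partial f(\bar x)+\epsilon_0 B(0,1)$ whenever $\normt{x-\bar x}<\eta$; hence every $v\in\partial f(x)$ is automatically within $\epsilon_0$ of $\partial f(\bar x)$, removing the restriction $\normt{v-\bar v}<\epsilon$. Continuity of $f$ at $\bar x$ gives $\eta'>0$ with $f(x)<f(\bar x)+\epsilon_0$ on $B(\bar x,\eta')$, removing $f(x)<f(\bar x)+\epsilon$. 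Taking $\epsilon:=\min\{\epsilon_0,\eta,\eta'\}$ (shrunk if necessary so that both $x$ and $x'$ are admissible), the subgradient inequality for $g$ holds for all $x,x'\in B(\bar x,\epsilon)$ and all $w\in\partial g(x)$. Finally, choosing one $w_x\in\partial g(x)$ at each point (nonempty since $g$ is locally Lipschitz) exhibits $g$ on $B(\bar x,\epsilon)$ as the pointwise supremum of the affine minorants $y\mapsto g(x)+\la w_x,y-x\ra$, which is convex; this is exactly para-convexity of $f$ on $B(\bar x,\epsilon)$ with respect to $a$.

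The step I expect to be the main obstacle is the uniformization over $\bar v\in\partial f(\bar x)$: prox-regularity supplies parameters $(\epsilon_{\bar v},a_{\bar v})$ a priori depending on $\bar v$, and the delicate point is to match an arbitrary $v\in\partial f(x)$ (for $x$ near $\bar x$) to a cover centre $\bar v_i$ so that the inequality genuinely transfers. This is where compactness of $\partial f(\bar x)$, together with the local-boundedness-plus-outer-semicontinuity estimate, does the real work; the square-completion and the sup-of-affine-minorants characterization of convexity are routine by comparison.
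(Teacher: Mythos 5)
Your proof is correct, and there is in fact no internal argument in the paper to compare it against: the authors do not prove Proposition \ref{prox-para} at all, but import it verbatim as Lemma 2.2 of \cite{Eberhard2001}. So what you have produced is a self-contained substitute for a citation, and every step of it checks out. The completion-of-the-square identity correctly shows that \eqref{prox-regularInequality} for $v\in\partial f(x)$ is exactly the subgradient inequality $g(x')\geq g(x)+\la w,x'-x\ra$ for $g=f(\cdot)+\frac{a}{2}\normt{\cdot}^2$ and $w=v+ax$, via the exact sum rule for a smooth perturbation. Your uniformization step is where the hypothesis of local Lipschitz continuity does its real work, and you use it correctly twice: first, $\partial f(\bar{x})$ is nonempty and compact (for a merely prox-regular $f$ it could be empty, making prox-regularity at $\bar{x}$ vacuous and the statement false in spirit), which legitimizes the finite subcover and the choices $a=\max_i a_{\bar{v}_i}$, $\epsilon_0=\min_i \epsilon_{\bar{v}_i}/2$, with your triangle-inequality bookkeeping $\normt{v-\bar{v}_i}<\epsilon_0+\epsilon_{\bar{v}_i}/2\leq\epsilon_{\bar{v}_i}$ being exactly right; second, local boundedness together with outer semicontinuity yields the inclusion $\partial f(x)\subseteq\partial f(\bar{x})+\epsilon_0 B(0,1)$ near $\bar{x}$ by the standard compactness argument, which is what discharges the filter $\normt{v-\bar{v}}<\epsilon$. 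Continuity removes the filter $f(x)<f(\bar{x})+\epsilon$, and the sup-of-exact-affine-minorants characterization of convexity on a convex set closes the argument (one could alternatively apply the subgradient inequality at $z=\lambda x+(1-\lambda)x'$ and average, but that is cosmetic). Two pedantic points, neither a gap: the cover of $\partial f(\bar{x})$ should be by the interiors of the balls $B(\bar{v},\epsilon_{\bar{v}}/2)$ since the paper's $B$ is closed, and the final radius must be taken strictly below $\min\{\epsilon_0,\eta,\eta'\}$ because the prox-regularity conditions are strict while $B(\bar{x},\epsilon)$ is closed --- your ``shrunk if necessary'' proviso already covers this. It is also worth noting that your argument is very much in the spirit of the paper: the finite-subcover uniformization you perform over the compact set $\partial f(\bar{x})$ is the same device the authors themselves use in Lemma \ref{convexific}, one level up, to uniformize the parameter `$a$' over the compact level set $\text{lev}_{x^0}f$ while taking Proposition \ref{prox-para} as given pointwise.
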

\begin{remark}\label{con:prop}
If  $f:\mathbbm{R}^n \rightarrow\overbar{\mathbbm{R}}$ is a convex function then it is useful to note that  
\begin{equation}
f(x)=\sup \left\{f(y)+s^T(x-y)|y\in \mathbb{R}^n,\ s\in \partial f(y)\right\}.
\end{equation}
\end{remark}
\section{LP-bundle method : The convex case}
Consider minimizing a convex function $f$ on $\mathbbm{R}^n$. Denote $S$ the set of minimizers of $f$. Then $S$ is closed and convex. Assuming $S$ is nonempty, the projection operator $P(\cdot)$ onto $S$ is well defined. In reference \cite{Linderoth2003}, a bundle trust-region method was proposed to solve a two-stage stochastic linear programming problem. We show that this method can be generalized to minimize any convex and locally Lipschitz continuous functions. We refer to the generalized method as bundle method with linear programming for convex optimization (LPBC).
Given an auxiliary point $y_i$ and a subgradient $s_i\in\partial f(y_i)$, 
a cutting-plane function is a linear mapping
\be\label{cuttinfplanefunction1}
x\mapsto f(y_i)+\la  s_i,x-y_i\ra.
\ee
The cutting-plane model of $f(x)$ is constructed by the point-wise maximum of the cutting-plane functions as follows:  
\begin{equation}\label{mod:con}
m(x)=\max\limits_{i\in I}\{f(y_i)+\la  s_i,x-y_i\ra\},
\end{equation}
where $I$ is the index set of auxiliary points. LPBC applies the model function on a trust region generated by infinity norm so that it solves the following subproblem sequentially, 
\begin{equation}
\begin{split}\label{transit}
\min\limits_{x\in \mathbbm{R}^n}&\quad m(x)\\
\mathrm{subject\ to}&\quad||x-\bar{x}||_\infty\leq \Delta,
\end{split}
\end{equation}
where $\bar{x}$ is the current best candidate for a minimizer of $f$ and $\Delta$ is the trust region radius. Adopting a scalar variable, problem \eqref{transit} is equivalent to the following linear programming problem 
\begin{subequations}\label{subpz2}
\begin{align} 
\min\limits_{(x,z)\in \mathbbm{R}^{n+1}}&\qquad \qquad\qquad z\\
\mathrm{subject\ to}&\quad f(y_i)+\la  s_i,x-y_i\ra\leq z,\ \forall\ i\in I,\label{subp:2}\\
\  & \qquad\quad||x-\bar{x}||_\infty\leq \Delta.
\end{align}
\end{subequations}

During the $k$th iteration, LPBC solves several linear problems with different model functions `$m$' and possibly different trust region radii $\Delta$ before a new iterate $x^{k+1}$ is identified. Hence LPBC refers to $x^k$ and $x^{k+1}$ as \textit{major iterates} and $x^{kl},\ l=0,1,2,\cdots$ obtained by solving the current linear problem as \textit{minor iterates}. We will also use $
x^*$ to denote minor iterates when it is not necessary to identify the iteration indices.
The subscript $kl$ and sometimes $(k,l)$ means $l$ minor iterations have been executed after $k$-th major iteration. Consequently, the $\bar{x}$, $\Delta$ and $I$ in \eqref{subpz2} are replaced by $x^k$, $\Delta^k_l$ and $I(k,l)$.  
After solving subproblem \eqref{subpz2}, an optimal solution $(x^{kl},z^{kl})$ is obtained. And $x^{kl}$ will be accepted as new iterate $x^{k+1}$ if it yields substantial reduction in the real objective $f$, otherwise the model function will be refined by adding and deleting cutting planes. The substantial reduction in the value of $f$ is measured by its quotient with the reduction of model value, i.e. $m^k_l(x^k)-m^k_l(x^{kl})$. 
LPBC updates the model $m$ in a way such that the following conditions hold:
\begin{equation}\label{modCond1}
m^k_l(x^k)=f(x^k),\quad \text{for all index }k,\ l.
\end{equation}
\begin{equation}\label{modCond2}
m^k_l \textrm{ is a convex, piecewise linear lower underestimate of }f ,\ l=1,2,\cdots.
\end{equation}
Specifically, to obtain $m^k_{l+1}$, LPBC flushes all the cutting planes except  the following two types.
\begin{itemize}
\item The cutting plane is generated at $x^k$. Thus the cutting plane $f(x^k)+{s^k}^T(x-x^k)$ is always kept in the linear subproblem during the $k$th major iteration;\label{modelup1}
\item The cutting plane is active at $x^{kl}$ with positive Lagrange multiplier. \label{modelup3}
\end{itemize}
LPBC adds the new cutting plane generated at 
$x^{kl}$, $f(x^{kl})+{s^k_l}^T(x-x^{kl})$ to the model $m^k_{l+1}$.
\SetAlgoShortEnd
\begin{procedure}
\SetAlgoLined
\caption{LPBC Updating Trust Region()}\label{btrutr}
Define 
\begin{equation}\label{rho:c}
\rho_l^k\mathrel{\mathop:}=\frac{f(x^k)-f(x^{kl})}{f(x^k)-m^k_l(x^{kl})}
\end{equation}
\uIf{$\rho^k_l>\eta_3$ \textbf{ and }$\norm{x^{kl}-x^k}>0.9\Delta$}{$\Delta\law\min\{\alpha_2\Delta,\Delta_{\max}\}$}
\ElseIf{$ \rho^k_l<-\frac{1}{\min\{1,\Delta\}}$}{
 $\Delta\law\alpha_1\Delta$\label{different}\;
 }
\end{procedure}

In the definition of $\rho^k_l$ in \eqref{rho:c}, the denominator $f(x^k)-m^k_l(x^{kl})$ is the reduction of model $m(x) $ from $x^k$ to $x^{kl}$ due to condition \eqref{modCond1}. 
We will use the notion \emph{linearization error}, the difference between the value of a function and the value of a cutting-plane function. Consider a cutting plane of a generic function, as defined in \eqref{cuttinfplanefunction1}. The linearization error of this cutting plane at $\bar{x}$ is 
\begin{equation}\label{eiTidle}
\tilde{e}_i\mathrel{\mathop:}=f(\bar{x})-[f(y_i)+\la s_i,\bar{x}-y_i\ra ].
\end{equation}
In the LPBC algorithm, the linear subproblem \eqref{subpz2} was used. Applying the KKT condition to \eqref{subpz2}, we can deduce the explicit expression of the model reduction of LPBC as following, 
 \be\label{con:modRed}
 m(\bar{x})-m(x^*) =\begin{cases}
 \sum\limits_{i\in \bar{I}}\lambda_i\tilde{e}_i,&\text{if } \norm{x^*-\bar{x}}<\Delta;\\
\sum\limits_{i\in \bar{I}}\lambda_i\tilde{e}_i+\Delta
\normo{\sum\limits_{i\in \bar{I}}\lambda_i s_i}, &\text{if } \norm{x^*-\bar{x}}=\Delta.
 \end{cases}
 \ee 
where $\bar{I}$ is the index set for active constraints in \eqref{subp:2} associated with an optimal solution $(x^*,z^*)$, and $\lambda_i$ for $i\in \bar{I}$ are the corresponding Lagrangian multipliers associated with $(x^*,z^*)$. Since $f$ is convex, all the linearization errors $\tilde{e}_i$ will be nonnegative. We also have  
\begin{equation}\label{con:epsSubd}
\sum\limits_{i\in \bar{I}}\lambda_i s_i\in \partial_{\bar{\epsilon}}f(\bar{x}),\text{ where }\bar{\epsilon}=\sum\limits_{i\in \bar{I}}\lambda_i \tilde{e}_i.
\end{equation}
The derivation of \eqref{con:modRed} and \eqref{con:epsSubd} can be found in Lemma \ref{reduction:lemma} where we prove the same conclusion for the generalization of LPBC, the LPBNC algorithm, with derivation following the same reasoning. The mapping $(x,\epsilon)\mapsto\partial_\epsilon f(x)$ is outer-semicontinuous, and hence when the model reduction decreases to 0 we have $0\in \partial f(x)$. Consequently, our stopping criterion is that the model reduction is sufficiently small as it is showed in line \ref{stop:conv} of Algorithm \ref{algo:1}. It is worthy to note that our model reduction in \eqref{con:modRed} is comparable with that in the classical bundle method which uses a quadratic model of the form 
\begin{equation}\label{mod:quad}
\min\limits_{x\in \mathbbm{R}^n} \quad\left[m(x)+\frac{\mu}{2}\normt{x-\bar{x}}^2\right].
\end{equation}
If the above model is used, then $m(\bar{x})-m(x^*)=\sum\limits_{i\in \bar{I}}\lambda_i \tilde{e}_i+\frac{1}{\mu}\normt{\sum\limits_{i\in \bar{I}}\lambda_i s_i}^2$. A significant difference between \eqref{transit} and \eqref{mod:quad} is that the latter is strictly convex but the former is not. The optimal solution to \eqref{mod:quad} is unique and can be expressed by $x^*=\bar{x}-\frac{1}{\mu}\sum\limits_{i\in \bar{I}}\lambda_i s_i$, while \eqref{transit} may have multiple solutions. The readers are referred to chapter XIV and XV of \cite{Hiriart-Urruty1993a} for a comprehensive understanding of classical bundle methods.
\LinesNumbered 
\begin{algorithm}
\caption{Algorithm LPBC}\label{algo:1}
\KwData{Final accuracy tolerance $\epsilon_{\mathrm{tol}}$,  and maximum trust region radius  $\Delta_{\max}$, initial trust region $\Delta^1_1\in[1,\Delta_{\max})$, initial point $x^0 $, trust region parameters $\eta_1,\ \eta_2, \ \eta_3$, integer $T\geq 20$ (inactive threshold); }
\textit{Initialization}\
Set the major and minor iteration counter $(k,l)\law (0,0)$\;
generate a cutting plane at $x^k$, update the index set $I(k,l)$ and define the cutting plane model \eqref{mod:con}. 
Set the minor iteration counter $l=0$, 
$y^k_1=x^k$ and compute $s^k_1\in\partial f(y^k_1)$\label{majorBegin}\;
Solve the linear programming subproblem \eqref{subpz2} and obtain an optimal solution $(x^{kl},z^{kl})$\label{line2z}\;
\label{stop:conv}\If{$f(x^k)- m^k_l(x^{kl})\leq (1+|f(x^k)|)\epsilon_{\rm tol}$}{STOP}\label{stop:end}
\eIf{$\rho_l^k=\frac{f(x^k)-f(x^{kl})}{f(x^k)-m^k_l(x^{kl})}\geq \eta_1$}{$x^{k+1}=x^{kl}$\;
obtain $\Delta^{k+1}_0$ via procedure \ref{btrutr}\;
obtain $m^{k+1}_0$ by keeping all cutting planes except those that have been inactive for $T$ iterations\;
$k=k+1$, continue to next major iteration by going to line \ref{majorBegin}}{obtain $\Delta^k_{l+1}$ via procedure \ref{btrutr}\;
delete all cutting planes except the one generated at $x^k$, 
those that are active at $x^{kl}$ and those that have been inactive for less than $T$ times. 
}
add the cutting plane
 $f(x^{kl})+{s^k_l}^T(x-x^{kl})$ to the model $m^k_{l+1}$\;
set $l=l+1$ and go to line \ref{line2z}\;
\end{algorithm}
Algorithm LPBNC is a generalization of LPBC and we will provide a convergence proof for this in section \ref{conana}. Hence we omit the proof here and only state the following convergence theorem for the convex case. 
\begin{theorem}
Suppose that $\epsilon_{\rm tol}=0$.\\
(i) If Algorithm \ref{algo:1} terminates at $x^{kl}$, then $x^k$ is a minimizer of $f$ with $x^k=P(x^k)$;\\
(ii) if there is an infinite number of minor iterations during the $k$th major iteration, then $x^k$ is a minimizer of $f$ with $x^k=P(x^k)$ and $\lim\limits_{l\rightarrow\infty}m^k_l(x^{kl})-f(x^k)=0$;\\
(iii) if the sequence of major iterations $\{x^k\}$ is infinite then $\lim\limits_{k\rightarrow\infty}\norm{x^k-P(x^k)} =0$.
\end{theorem}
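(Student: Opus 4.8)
The plan is to read all three conclusions off the explicit model--reduction formula \eqref{con:modRed} and the $\epsilon$--subdifferential inclusion \eqref{con:epsSubd}, turning smallness of the model reduction $v^k_l:=f(x^k)-m^k_l(x^{kl})\ge0$ into approximate stationarity of $x^k$, and then invoking outer semicontinuity of $(x,\epsilon)\mapsto\partial_\epsilon f(x)$ together with convexity. Part (i) is then immediate: with $\epsilon_{\rm tol}=0$, firing the stopping test in line \ref{stop:conv} forces $v^k_l=0$, and by \eqref{modCond1} this equals the model reduction $m^k_l(x^k)-m^k_l(x^{kl})$. In \eqref{con:modRed} the interior case already gives $\sum_{i\in\bar I}\lambda_i s_i=0$ from KKT stationarity in $x$, while in the boundary case the nonnegative term $\Delta\normo{\sum_{i\in\bar I}\lambda_i s_i}$ must vanish; since $\Delta>0$ the aggregate subgradient is $0$ in both cases, and nonnegativity of the $\tilde e_i$ (convexity) forces $\bar\epsilon=\sum_{i\in\bar I}\lambda_i\tilde e_i=0$. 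Then \eqref{con:epsSubd} gives $0\in\partial_0 f(x^k)=\partial f(x^k)$, so $x^k$ minimizes $f$ and $P(x^k)=x^k$.

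The substance is in (ii), where every minor step is a null step ($\rho^k_l<\eta_1$) and I must show $v^k_l\to0$. First I would note that all minor iterates lie in $B(x^k,\Delta_{\max})$ in the infinity norm, so local Lipschitzness makes the subgradients $s^k_l$ uniformly bounded. The structural engine is that the cutting plane generated at $x^{kl}$ is kept in $m^k_{l+1}$, so $m^k_{l+1}$ dominates that plane (and, being an underestimate of $f$ by \eqref{modCond2}, in fact interpolates, $m^k_{l+1}(x^{kl})=f(x^{kl})$); evaluating the domination at the next minimiser $x^{k,l+1}$ and applying the null--step test yields the recursion $v^k_{l+1}\le f(x^k)-f(x^{kl})-\la s^k_l,x^{k,l+1}-x^{kl}\ra<\eta_1 v^k_l-\la s^k_l,x^{k,l+1}-x^{kl}\ra$. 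Passing to a convergent subsequence of $\{x^{kl}\}$ and of $\{s^k_l\}$ the inner--product term tends to $0$, and since $\eta_1<1$ this squeezes $v^k_l\to0$. Then $v^k_l\to0$ forces $\bar\epsilon_l\to0$ and $\Delta^k_l\normo{g_l}\to0$ with $g_l:=\sum_{i\in\bar I}\lambda_i s_i\in\partial_{\bar\epsilon_l}f(x^k)$; provided $\Delta^k_l$ stays bounded below this gives $g_l\to0$, and outer semicontinuity of $\partial_\epsilon f$ yields $0\in\partial f(x^k)$, so $x^k$ is a minimizer and $m^k_l(x^{kl})-f(x^k)=-v^k_l\to0$.

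The hard part, and where I expect the real technical work, is justifying that the inner--product term above vanishes in the limit and that $\Delta^k_l$ is bounded below. Two features break the clean monotonicity of a pure Kelley cutting--plane argument: the flushing of inactive planes, and the trust--region dynamics of procedure \ref{btrutr}. I anticipate one must first show that during an infinite run of null steps $\Delta^k_l$ is eventually bounded away from $0$ (intuitively, once $v^k_l$ is small the ``very bad step'' shrinking test $\rho^k_l<-1/\min\{1,\Delta\}$ ceases to fire), so that the trust region stabilises, $x^{kl}$ stays strictly feasible, and the interpolation and feasibility steps underlying the recursion are legitimate; one must also bookkeep which planes survive the flushing so that the recursion can be chained along the chosen subsequence.

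For (iii) every major step is serious, so $f(x^k)-f(x^{k+1})\ge\eta_1 v^k\ge0$; bounded level sets keep $\{x^k\}$ compact and $f$ bounded below, whence $f(x^k)$ converges and $\sum_k v^k<\infty$, giving $v^k\to0$. As in (ii), \eqref{con:modRed}--\eqref{con:epsSubd} and outer semicontinuity make every accumulation point of $\{x^k\}$ a minimizer, so by continuity $f(x^k)\to\bar f$. I would then conclude $\norm{x^k-P(x^k)}\to0$: since $f(x^k)\to\bar f$ and $\{x^k\}$ is bounded, every accumulation point $\hat x$ has $f(\hat x)=\bar f$, hence $\hat x\in S$, and therefore $\operatorname{dist}(x^k,S)=\norm{x^k-P(x^k)}\to0$ (otherwise a subsequence staying a distance $\delta>0$ from the closed convex set $S$ would have a limit outside $S$). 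The min--norm--subgradient lower bound $\operatorname{dist}(0,\partial f(x))\ge(f(x)-\bar f)/\norm{x-P(x)}$ from the introduction supplies the quantitative version of this link between optimality gap and distance to $S$, and an alternative route to the same conclusion.
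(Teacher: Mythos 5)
Your part (i) is correct, and your route through \eqref{con:modRed}--\eqref{con:epsSubd} (aggregate subgradient, aggregate linearization error, outer semicontinuity of $\partial_\epsilon f$) is genuinely different from the paper's: the paper proves nothing here directly, but defers entirely to the LPBNC analysis of Section \ref{conana}, whose engine is the Moreau envelope and proximal mapping via Lemma \ref{local:mod}, which for $a=0$ is exactly the projection bound $v^k_l\geq[f(x^k)-\bar f]\min\{\Delta^k_l/\norm{x^k-P(x^k)},1\}$, where $v^k_l:=f(x^k)-m^k_l(x^{kl})$. The serious gap is in your (ii) (and it is inherited by (iii)): your treatment of the trust-region radius. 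The heuristic that ``once $v^k_l$ is small the shrinking test $\rho^k_l<-1/\min\{1,\Delta\}$ ceases to fire'' is backwards. By \eqref{rho:c}, the test fires exactly when $f(x^{kl})-f(x^k)>v^k_l/\min\{1,\Delta^k_l\}$, so a small model reduction \emph{lowers} the firing threshold and makes shrinking easier, not harder; what is true is the reverse implication: firing forces $v^k_l\leq\bar{L}\,\Delta^k_l\min\{1,\Delta^k_l\}$ (the paper's inequality \eqref{deltaM}). Consequently you cannot exclude $\Delta^k_l\rightarrow0$, and in that regime your concluding step collapses: $\Delta^k_l\normo{g_l}\rightarrow0$ no longer yields $g_l\rightarrow0$ for the aggregate subgradient $g_l$, and outer semicontinuity gives nothing (picture a kinked convex $f$ at a nonoptimal $x^k$: aggregate subgradient of norm one, tiny trust region, tiny model reduction). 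This is precisely the case that Lemma \ref{local:mod} combined with \eqref{deltaM} is designed to kill in Theorems \ref{infini:minor} and \ref{theo:final}. Your framework can in fact be repaired without Lemma \ref{local:mod}: if $\Delta^k_l\rightarrow0$ then the test fires infinitely often, and along the firing subsequence \eqref{con:modRed} gives $\normo{g_l}\leq v^k_l/\Delta^k_l\leq\bar{L}\min\{1,\Delta^k_l\}\rightarrow0$ together with $\bar\epsilon_l\leq v^k_l\rightarrow0$, so the $\epsilon$-subdifferential argument closes --- but this dichotomy is absent from your proposal, and the justification you offer in its place is incorrect.

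A second, smaller gap is the squeeze in (ii). The consecutive-pair recursion $v^k_{l+1}<\eta_1v^k_l-\la s^k_l,x^{k,l+1}-x^{kl}\ra$ cannot be closed by passing to a convergent subsequence of $\{x^{kl}\}$: convergence of $x^{kl_j}$ says nothing about $x^{k,l_j+1}-x^{kl_j}$, and iterating the recursion accumulates non-summable inner-product terms. What works --- and is the content of the paper's Lemma \ref{MinorTermiOr} together with Lemma \ref{modnondecr} and Lemma \ref{dec:0}(i) --- is the two-index inequality $v^k_q\leq\eta_1v^k_l+\normo{s^k_l}\norm{x^{kq}-x^{kl}}$ for arbitrary $q>l$, valid while the plane generated at $x^{kl}$ is still present in $m^k_q$, combined with monotonicity of $v^k_l$ and compactness of $B(x^k,\Delta_{\max})$. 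You correctly flag that plane retention is the obstruction but leave it open, and it is genuinely delicate here: LPBC's null steps flush planes that have been inactive for $T$ iterations, whereas the paper's lemmas are proved for LPBNC, which deletes nothing during minor iterations. Finally, your endgame for (iii) (every accumulation point is a minimizer, hence $f(x^k)\rightarrow\bar f$, hence $\norm{x^k-P(x^k)}\rightarrow0$) is sound, but it rests on the same two unproved ingredients --- in particular $\Delta^k_{l_k}\rightarrow0$ must be handled, which is what the chain \eqref{delta:bo1}--\eqref{Dkjkgt0} in Theorem \ref{theo:final} does --- and, like the paper, it implicitly imports the bounded level set hypothesis of Assumption 1 into the convex statement.
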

\section{LP-bundle method : The nonconvex case}
\subsection{Derivation of the Method}
Based on the LPBC, we can derive a nonconvex version of the method through convexification for special types of functions. Specifically, we consider locally Lipschitz continuous functions that are prox-regular. Such functions are para-convex. 
Hence we can use a linear model to approximate a locally convex function. Under some assumptions, we show that the accumulation point of the minimizers of such functions is a stationary point of the objective function via the theory of proximal point mapping.

First we state the assumption on the objective function.
\begin{asump}\label{assump}
The objective function $f$ is locally Lipschitz continuous and bounded below. Given $x^0 \in\mathbbm{R}^n$ $f$ is prox-regular on bounded level set $\text{lev}_{ x^0}f$.
\end{asump}
Note that a single-valued function $f$ is prox-regular on an open set $O$ and locally Lipschitz continuous is equivalent to that $f$ is lower-$C^2$ on $O$; see \cite[Prop. 13.33]{Rockafellar1998}. 
Define 
\be\label{g} 
g(y)\mathrel{\mathop:}=g(y;x,a):y\mapsto f(y)+{\frac{a}{2}}||y-x||^2
\ee 
with $x$ and $a$ as parameters. Under Assumption \ref{assump}, the Moreau envelope function and the proximal point mapping associated with the objective $f$ are globally well defined. We redefine them here as 
\begin{align}
e_a(x)\mathrel{\mathop:}=\min\limits_{y\in \mathbbm{R}^n} \{g(y;x,a)\}\quad
P_{a}\left( x\right)\mathrel{\mathop:}=\arg\min\limits_{y\in \mathbbm{R}^n} \{g(y;x,a)\}.\label{pro:map}
\end{align}
If $f$ is para-convex, then according to Definition \ref{paraconvexity}, $g(y)$ is convex on some neighborhood $B_b(x)$. 
Clearly, for different $x$ and $b$, in order to make $g(y)$ convex with respect to $y$, there exists a threshold for the value of `$a$'. 
The motivation of our method is based on the following observation. 
Suppose we have some sequences $x^k\rightarrow x'$, $a^k\rightarrow a'$, and $b^k\rightarrow b'$ such that $g(y;x^k,a^k)$ is convex with respect to $y$ on $B_{b^k}(x^k)$ for all $k$. Then we can use a cutting-plane model for $g(y;x^k,{a^k})$ with trust region as in LPBC to obtain descent locally.  
To justify its stationarity, $x'$ should be a global minimizer of $g(y;x',a')$. In fact, the outer semicontinuity of the mapping $(x,a)\mapsto P_a(x)$ means if there exist $x^k\rightarrow\bar{x}$, $\{a^k\}$ bounded and $p^k\in P_{a^k}(x^k)$ with $\normt{x^k-p^k}\rightarrow 0$, then $\bar{x}\in P_{\bar{a}}(\bar{x})$ for some $\bar{a}$. 
Thus in order to find a stationary point our goal can be translated to generating a sequence $\{x^n\}$ and $\{p^n\}$ such that $\lim\limits_{n\rightarrow\infty}\norm{x^n-p^n}=0$ with $p^n\in P_{a^n}(x^n)$ and $\{a^n\}$ bounded.
Further discussion will be made in Section \ref{conana}.

The following lemma shows that there exists a threshold for the value $a$ such that the function $g(y)$ is locally convex on $\text{ lev}_{ x^0}f$ (which is not necessarily convex).
\begin{lemma}\label{convexific}
Under assumption \ref{assump}, there exists a number $\bar{a}\geq 0$ such that for any $a\geq \bar{a}$, the function $g(y;x,a)$ is convex on a neighborhood of $y$ for all $y\in\text{ lev}_{ x^0}f$.
\end{lemma}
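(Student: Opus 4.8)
The plan is to reduce the global statement to finitely many local ones via compactness, after recording two preliminary observations. First I would note that the convexity in $y$ of $g(y;x,a)=f(y)+\frac{a}{2}\|y-x\|^2$ does not actually depend on the center $x$: since $\frac{a}{2}\|y-x\|^2=\frac{a}{2}\|y\|^2+(\text{affine in }y)$ and convexity is invariant under adding an affine function, $g(\cdot;x,a)$ is convex on a convex set $C$ if and only if $f(\cdot)+\frac{a}{2}\|\cdot\|^2$ is convex on $C$, i.e. if and only if $f$ is para-convex on $C$ with respect to $a$ in the sense of Definition \ref{paraconvexity}. This lets me argue entirely in terms of para-convexity and drop the parameter $x$ from consideration.

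Second, I would record that para-convexity is monotone in $a$: if $f(\cdot)+\frac{a_0}{2}\|\cdot\|^2$ is convex on a convex set $C$, then for every $a\ge a_0$ the function $f(\cdot)+\frac{a}{2}\|\cdot\|^2=\bigl(f(\cdot)+\frac{a_0}{2}\|\cdot\|^2\bigr)+\frac{a-a_0}{2}\|\cdot\|^2$ is a sum of two convex functions on $C$, hence convex. So once a convexification parameter works on a ball, every larger parameter works on the same ball.

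The core of the argument is then a covering argument. For each $y\in\text{lev}_{x^0}f$, Proposition \ref{prox-para} (applicable since $f$ is locally Lipschitz and prox-regular on $\text{lev}_{x^0}f$ by Assumption \ref{assump}) supplies $\epsilon_y>0$ and $a_y\ge0$ such that $f$ is para-convex on $B(y,\epsilon_y)$ with respect to $a_y$. The open balls $\{\text{int }B(y,\epsilon_y)\}_{y\in\text{lev}_{x^0}f}$ then form an open cover of the level set. Because $f$ is continuous and $\text{lev}_{x^0}f=\{x:f(x)\le f(x^0)\}$ is closed and bounded, it is compact, so I can extract a finite subcover indexed by $y_1,\dots,y_N$. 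Setting $\bar a:=\max\{a_{y_1},\dots,a_{y_N}\}$, I claim this $\bar a$ works: given any $a\ge\bar a$ and any $y\in\text{lev}_{x^0}f$, the point $y$ lies in some $\text{int }B(y_i,\epsilon_{y_i})$, an open convex neighborhood of $y$ on which $f$ is para-convex with respect to $a_{y_i}\le a$; by the monotonicity step $f$ is para-convex there with respect to $a$, and by the first step $g(\cdot;x,a)$ is convex on that neighborhood for every $x$.

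The only genuine obstacle is producing a single $\bar a$ valid across the entire level set, which a priori is only controlled pointwise; this is exactly what compactness resolves, and it is why Assumption \ref{assump} insists the level set be bounded. I would take care to note that the neighborhoods attached to different points need not share a common radius, and compactness does not provide a uniform one; but the conclusion only requires a uniform parameter $\bar a$ with the neighborhoods allowed to vary with $y$, and no gluing of local convexity across overlaps is needed, which is precisely what the finite subcover delivers.
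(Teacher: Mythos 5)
Your proposal is correct and follows essentially the same route as the paper's own proof: local para-convexity from Proposition \ref{prox-para}, a finite subcover of the compact level set, $\bar{a}$ taken as the maximum of the finitely many local parameters, and the observations that the quadratic term $\frac{a}{2}\|y-x\|^2$ differs from $\frac{a}{2}\|y\|^2$ only by an affine function of $y$ and that adding $\frac{a-\bar{a}}{2}\|\cdot\|^2$ preserves convexity. The only difference is organizational — you isolate the $x$-independence and monotonicity-in-$a$ facts up front, whereas the paper folds them into the final two identities — but the mathematics is the same.
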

\begin{proof}
According to Assumption \ref{assump}, $f$ is locally Lipschitz continuous and given $x^0\in\mathbbm{R}^n$, $f$ is prox-regular at each point in the compact set $\text{ lev}_{ x^0}f$. 
For all $x\in\text{ lev}_{ x^0}f$, there exist $\epsilon(x)$ and $a(x)$ such that $f$ is prox-regular at $x$ with respect to $\epsilon(x)$ and $a(x)$. 
By Proposition \ref{prox-para}, $f$ is para-convex on $B(x,\epsilon(x))$ with respect to $a(x)$ for all $x\in\text{ lev}_{ x^0}f$; i.e. the function $\tilde{g}(y;a(x))\mathrel{\mathop:}=f(y)+\frac{a(x)}{2}||y||^2$ is convex on $B(x,\epsilon(x))$ for each $x\in\text{ lev}_{ x^0}f$. We see $\left\{{\rm int }\ B(x,\epsilon(x))\big | x\in\text{ lev}_{ x^0}f\right\}$ is an open cover of $\text{ lev}_{ x^0}f$ and it has a finite subcover $\left\{{\rm int }\ B(x_i,\epsilon(x_i))\big|i=1,\cdots,m \right\}$ corresponding to some $a(x_i),\ i=1,\ \cdots,\ m$. Define $\bar{a}\mathrel{\mathop:}=\max\left\{a(x_i)\big|i=1,\cdots,m\right\}$. Then the function\\ $\tilde{g}(y;\bar{a})=\tilde{g}(y;a(x_i))+\frac{\bar{a}-a(x_i)}{2}||y||^2$ for all $i=1,\cdots,m$ is also convex on each $B(x_i,\epsilon(x_i))$.
Consequently, $g(y;x,\bar{a})=\tilde{g}(y;\bar{a})-\bar{a}\left\langle y,x\right\rangle+\frac{\bar{a}}{2}||x||^2$ is convex on each $B(x_i,\epsilon(x_i))$ and so is $g(y;x,a)=g(y;x,\bar{a})+\frac{a-\bar{a}}{2}\normt{y-x}^2$ for all $a\geq \bar{a}$.      
\end{proof}

The following theorem shows that there exists a threshold for the value $a$ such that the function $g(y)$ is the restriction to $\text{lev}_{ x^0}f$ of a convex function. See the Appendix for the proof of this theorem.
\begin{theorem}\label{conv:th}
Suppose $f$ is prox-regular and locally Lipschitz on a bounded level set $%
\text{lev}_{ x^0}f$ with $\text{int}$ $\text{lev}_{ x^0}f\neq \emptyset $.
Let $g\left( y;x,a\right)$ be defined in \eqref{g} with $a \geq 0$. 
There exists a number $a^{th}\geq 0$ such that for all $a\geq a^{th}$, $%
g(y;x,a)$ is the restriction to $\text{lev}_{ x^0}f$ of a globally convex function $H(y;x,a)$ satisfying $g(y;x,a)\geq H(y;x,a)$ for all $y\in\mathbbm{R}^n$ and $x\in \text{lev}_{x^0}f$. 
\end{theorem}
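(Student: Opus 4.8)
The plan is to construct $H$ explicitly as a supremum of affine minorants of $g$, one supporting $g$ at each point of the level set, and then to prove that for $a$ large these local supports are in fact \emph{global} minorants. For each $y_0\in\text{lev}_{x^0}f$ and each $v\in\partial g(y_0;x,a)$ set $\ell_{y_0,v}(y)\mathrel{\mathop:}=g(y_0;x,a)+\la v,y-y_0\ra$ and define $H(y;x,a)\mathrel{\mathop:}=\sup\{\ell_{y_0,v}(y)\mid y_0\in\text{lev}_{x^0}f,\ v\in\partial g(y_0;x,a)\}$. As a pointwise supremum of affine functions, $H$ is automatically convex on all of $\mathbbm{R}^n$. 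The decisive reduction is this: once I show $\ell_{y_0,v}\le g$ on all of $\mathbbm{R}^n$ for every admissible pair $(y_0,v)$, I immediately get $H\le g$ everywhere, while $\ell_{y_0,v}(y_0)=g(y_0)$ forces $H(y_0)=g(y_0)$ on the level set; together these are exactly the three assertions. Compactness of $\text{lev}_{x^0}f$ keeps $g(y_0;x,a)$ and the admissible $v$ bounded, so $H$ is finite.

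Everything therefore rests on the single inequality $g(y;x,a)\ge g(y_0;x,a)+\la v,y-y_0\ra$ for all $y\in\mathbbm{R}^n$, $y_0\in\text{lev}_{x^0}f$ and $v\in\partial g(y_0;x,a)$. Writing $v=v_f+a(y_0-x)$ with $v_f\in\partial f(y_0)$ and using the identity $\frac{a}{2}\normt{y-x}^2-\frac{a}{2}\normt{y_0-x}^2-a\la y_0-x,y-y_0\ra=\frac{a}{2}\normt{y-y_0}^2$, the parameter $x$ cancels completely and the inequality collapses to $f(y)\ge f(y_0)+\la v_f,y-y_0\ra-\frac{a}{2}\normt{y-y_0}^2$. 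This cancellation is what will make the threshold $a^{th}$ independent of $x$. I then fix uniform data on the compact set $\text{lev}_{x^0}f$: a Lipschitz constant $L$ for $f$ (so also $\normt{v_f}\le L$), a bound $M=\max_{\text{lev}_{x^0}f}|f|$, and, by Assumption \ref{assump}, the global lower bound $c_0\mathrel{\mathop:}=\inf_{\mathbbm{R}^n}f>-\infty$. From Lemma \ref{convexific} I keep the fixed finite family of balls $B(x_i,\epsilon(x_i))$, $i=1,\dots,m$, on each of which $g(\cdot;x,a)$ is convex for every $a\ge\bar{a}$ and every $x$; a compactness (Lebesgue-number) argument then yields a single radius $\delta>0$ with $B(y_0,\delta)\subseteq B(x_i,\epsilon(x_i))$ for some $i$ and every $y_0\in\text{lev}_{x^0}f$.

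Now I split the reduced inequality according to the size of $\normt{y-y_0}$. If $\normt{y-y_0}\le\delta$, then $y$ and $y_0$ lie in a common ball $B(x_i,\epsilon(x_i))$ on which $g(\cdot;x,a)$ is convex, so the convex subgradient inequality gives the claim at once. If $\normt{y-y_0}\ge\delta$, local convexity is useless, and I instead bound the reduced right-hand side below by $c_0-M-L\normt{y-y_0}+\frac{a}{2}\normt{y-y_0}^2$; viewed as a quadratic in $t=\normt{y-y_0}$, this is nonnegative on $[\delta,\infty)$ as soon as $a\ge L/\delta$ and $a\ge 2(L\delta+M-c_0)/\delta^2$. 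Setting $a^{th}\mathrel{\mathop:}=\max\{\bar{a},\ L/\delta,\ 2(L\delta+M-c_0)/\delta^2\}$ validates both regimes simultaneously, so every $\ell_{y_0,v}$ globally minorizes $g$ and the construction of $H$ delivers the theorem.

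The main obstacle is precisely the far regime $\normt{y-y_0}\ge\delta$: local convexity says nothing there, and one must show that the quadratic growth $\frac{a}{2}\normt{y-y_0}^2$ eventually dominates the linear Lipschitz deficit uniformly in $y_0$ and, crucially, in $x$. Compactness of $\text{lev}_{x^0}f$, supplying the finite $L$, $M$ and the uniform radius $\delta$, together with the global lower bound $c_0$ from Assumption \ref{assump}, is exactly what makes this quadratic estimate clean, and the cancellation of $x$ in the reduction is what keeps $a^{th}$ free of $x$. The hypothesis $\text{int }\text{lev}_{x^0}f\neq\emptyset$ is needed only to guarantee a nondegenerate level set on which the convexity balls and their uniform radius are meaningful; the substance of the proof is the local convexity of Lemma \ref{convexific}, compactness, and the boundedness of $f$ and of its subgradients on $\text{lev}_{x^0}f$. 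Equivalently one could take $H$ to be the largest convex minorant of $g$, whose finiteness follows from coercivity of $g$, but the explicit supremum makes the equality $H=g$ on the level set transparent.
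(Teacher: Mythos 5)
Your proof is correct, and it takes a genuinely different route from the paper's. The paper's own argument (Theorem \ref{conv:th1} in the appendix) is functional-analytic: it develops Lemmas \ref{lem:4} and \ref{lem:5} on strongly exposed and extremal points of epigraphs, the density result of Proposition \ref{prop:noconv}, and Corollary \ref{cor:convex} (a continuous function on a compact set whose epigraph has a dense set of strongly exposed boundary points is the restriction of a convex function), and then verifies the exposedness hypothesis by showing that each tilted problem $0\in\partial[g-\la z,\cdot\ra](y)$ has a unique solution, via the Minty parametrization and maximal monotonicity of $\partial g+(\eta-\bar{\eta})I$, citing \cite[Theorem 12.15]{Rockafellar1998}. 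You instead construct $H$ explicitly as the upper envelope of the supporting affine functions of $g$ at points of the level set, and prove each support is a \emph{global} minorant of $g$ by a two-regime estimate: the cancellation of $x$ reduces everything to $f(y)\geq f(y_0)+\la v_f,y-y_0\ra-\frac{a}{2}\normt{y-y_0}^2$, which for $\normt{y-y_0}\leq\delta$ follows from the local convexity of Lemma \ref{convexific} (with $\delta$ a Lebesgue number of the finite cover, and using that limiting subgradients are convex subgradients on a region of convexity), and for $\normt{y-y_0}\geq\delta$ follows because $\frac{a}{2}t^2$ dominates $Lt+M-c_0$ once $a\geq\max\{L/\delta,\ 2(L\delta+M-c_0)/\delta^2\}$; both regimes are monotone in $a$, so the single threshold serves all $a\geq a^{th}$. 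Your route is elementary and constructive, yields an explicit $H$ and an explicit, data-dependent threshold, and makes the independence of $a^{th}$ from $x$ transparent; the paper's route builds reusable abstract machinery (Corollary \ref{cor:convex} is invoked again in Lemma \ref{lm:34}) and ties the result to the proximal-mapping and maximal-monotonicity theory that powers the rest of the convergence analysis. One caveat: your far regime invokes $c_0=\inf_{\mathbbm{R}^n}f>-\infty$ from Assumption \ref{assump}, which is not literally among the theorem's stated hypotheses; this is harmless, both because the paper's own proof equally leans on Assumption \ref{assump} material (Lemma \ref{convexific} and the quadratic-minorization preliminaries of the appendix), and because boundedness below actually follows from boundedness of the level set together with continuity of $f$ outside it.
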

Theorem \ref{conv:th} essentially shows that $g$ can be described as a restriction of a convex function. We will in the future refer to this as the 'restriction property'.

\subsection{On-The-Fly Convexification}
The threshold value $a^{th}$ is hard to find. Our goal in this section (see also Section \ref{sec:updM}) is to find a lower bound for the parameter $a$ such that $g(y;\bar{x},a)$ is a restriction of a convex function locally within $\text{lev}_{x^0}f$. We first introduce the convexification technique which first appeared in \cite{Hare2010}. Suppose we are at the current iteration point, i.e. the current best candidate for a stationary point of $f$. We denote this point $\bar{x}$ in general and by $x^k$ when it is necessary to indicate it is in the $k$-th iteration. 
 A necessary condition for this is that all the cutting planes generated at the points in the subset should be below the graph of $g$, since a convex function is essentially represented by the point-wise supremum of cutting-plane functions.

Denote $\partial g(y;x,a) $ as the subdifferential of function $g$ with respect to variable $y$. 
It follows from the calculus of subdifferential and Assumption \ref{assump}
 that 
 \be 
 \partial g(y;x,a)=\partial f(y)+a(y-x) \text{ and } \partial g(x;x,a)=\partial f(x)
 \ee
  for all $y$ and $x$ in the set $\text{lev}_{x^0}f$. 
  For any $s\in \partial f(y)$ and $y\in \text{lev}_{x^0}f$, clearly we have $s+a(y-x)\in \partial g(y;x,a)$. 
  Consequently, the cutting-plane function of $g$ at the point $y_i$  
  can be written as 
\be\label{cuttingplaneFunc}
h(w,\bm{y}_i)\mathrel{\mathop:}= h(w;x,a,y_i):w\mapsto f(y_i)+{\frac{a}{2}}||y_i-x||^2+\la  s_i+a(y_i-x),w-y_i\ra, 
\ee
where $s_i\in \partial f(y_i)$. 
According to Theorem \ref{conv:th}, under Assumption \ref{assump}, if $a\geq a^{th}$, $g$ is a restriction to $\text{lev}_{x^0}f$ of a convex function $H$ minorizing $g$. Thus a cutting plane of $g$ generated at an auxiliary point $y_i\in\text{int }\text{lev}_{x^0}f$ is the same cutting plane of $H$ generated at $y_i$; additionally, as $H$ is a convex function minorizing $g$, this cutting plane is not only below the graph of $H$ but also below that of $g$. In summary we have 
\begin{equation}\label{con:c1}
g(y;{x},a)-h(y;{x},a,y_i)\geq 0,\ \forall\ y\in\mathbb{R}^n,\ x\in\text{lev}_{x^0}f,\ y_i\in\text{int lev}_{x^0}f,\  a\geq a^{th}.
\end{equation}
We provide a localized convexification process by selecting a collection of points around the current iteration point and verifying the necessary condition for convexity. We let `$a$' be variable and $I$ be some index set, and set 
\begin{equation}\label{con:con}
g(y_j;\bar{x},a)-h(y_j;\bar{x},a,y_i)\geq 0,\ \forall\ i,\ j\in I;
\end{equation}
to deduce the necessary condition for $a$: $a\geq \tilde{a}^{\min}$, where 
\begin{equation}\label{a:min}
\tilde{a}^{\min}\mathrel{\mathop:}= \max\limits_{i,j\in I}\{-\frac{f(y_i)-f(y_j)-\la s_{j},y_i-y_j\ra}{\frac{1}{2}\normt{y_i-y_j}^2}\}.
\end{equation}
This value can be negative, so we set ${a}^{\min}\mathrel{\mathop:}=\max\{\tilde{a}^{\min},0\}$. Consequently, for any $a\geq{a}^{\min}$, \eqref{con:con} holds true.  

Note that ${a}^{\min}$ is dependent on the points indexed in $I$. Consequently, each time a new auxiliary point $y_i$ is obtained $a^{\min}$ needs to be updated. We also note that, $a^{\min}$, the local lower bound for the convexification parameter, determined by \eqref{con:con} where $\bar{x}\in\text{lev}_{x^0}f,\ y_i\in\text{int lev}_{x^0}f,\ \forall\ i\in I$, is not greater than $a^{th}$ that satisfies \eqref{con:c1}. 

\subsection{The Model Problem and Model Reduction}
The cutting-planes model of $g(y;x,a)$ is defined by  
\be \label{model}
m(w)\mathrel{\mathop:} =m(w;x,a,I):w\mapsto \max\limits_{i\in I}\{h(w;x,a,y_i)\},\ee where $I$ is the index set of auxiliary points $y_i$ where cutting planes of function $g$ are generated. 
Our algorithmic model is defined by \eqref{model} and in the remainder of this paper, $m$ refers to the model defined in \eqref{model} unless otherwise stated. 
Suppose we are at $\bar{x}$. To proceed in finding a new candidate, we intend to obtain descent in $f$ by minimizing the cutting-planes model of $g(x;\bar{x},a)$ over a trust region, i.e. we solve the linear subproblem 
\be \label{subproblemoriginal}
\min\limits_{x\in\mathbbm{R}^n}\qquad m(x;\bar{x},a,I)\qquad \text{subject to }\quad \norm{x-\bar{x}}\leq\Delta,
\ee
which is equivalent to the following problem 
\begin{subequations}\label{subppc}
\begin{align}
\min\limits_{(x,z)\in \mathbbm{R}^{n+1}}&\qquad \qquad\qquad z\\
\mathrm{subject\ to}&\quad f(y_i)+{\frac{a}{2}}\normt{ y_i-\bar{x}}^2+\la s_i+a(y_i-\bar{x}),x-y_i\ra\leq z,\ i\in I,\label{sub2pc}\\
\  & \qquad\quad \norm{x-\bar{x}}\leq \Delta.\label{sub3pc}
\end{align}
\end{subequations}
We would like to inspect the reduction of the model function $m$ after we have obtained a new trial point via the linear programming problem \eqref{subppc}. 
Denote a general optimal solution of problem \eqref{subppc} by $(x^*,z^*)$ and by $(x^{kl},z^{kl})$ when it is necessary to indicate it is in the $l$-th minor iteration in the $k$-th major iteration. 
In the following lemma we derive the explicit expression of the reduction of the model from $\bar{x}$ to $x^*$. We will use the linearization errors of $g(\cdot;\bar{x},a)$ at $\bar{x}$:\\
\begin{align}
E_i\mathrel{\mathop:}=&g(\bar{x};\bar{x},a)-h(\bar{x};\bar{x},a,y_i)\label{Def:Ei:1}\\
=&f(\bar{x})-\left[f(y_i)+{\frac{a }{2}}\normt{y_i-\bar{x}}^2+\la s_i+a (y_i-\bar{x}),\bar{x} -y_i\ra\right],\ \forall\ i\in I.\label{Def:Ei}
\end{align}
\begin{lemma}\label{reduction:lemma}
Consider the linear problem \eqref{subppc}. Let $\bar{i}\in I$ be such that $\bar{x}=y_{\bar{i}}$, $m(x)\mathrel{\mathop:}= m(x;\bar{x},a,I)$ be defined as in \eqref{model}, $(x^*,z^*)$ be an optimal solution of \eqref{subppc} and suppose `$a$' is such that $E_i\geq 0,\ \forall \ i\in I$. \\(i) The following holds true
\be\label{modelreductionexpression} 
 m(\bar{x})-m(x^*) =f(\bar{x})-z^*=\begin{cases}
 \sum\limits_{i\in \bar{I}}\lambda_iE_i,&\text{if } \norm{x^*-\bar{x}}<\Delta;\\
\sum\limits_{i\in \bar{I}}\lambda_iE_i+\Delta
\normo{\sum\limits_{i\in \bar{I}}\lambda_i [s_i+a(y_i-\bar{x})]}, \mspace{-4mu} &\text{if } \norm{x^*-\bar{x}}=\Delta,
 \end{cases}
 \ee
where $\bar{I}$ is the index set for active constraints in \eqref{sub2pc} at $(x^*,z^*)$,  
and $\lambda_i$ for $i\in \bar{I}$ are the corresponding Lagrangian multipliers of \eqref{sub2pc}.
\\(ii) Let $C$ be any set satisfying $\bar{x}\in \mathrm{int}\ C$ and $y_i\in \mathrm{int}\ C$ for all $i\in \bar{I}$, if additionally `$a$' is such that $g(y;x,a)$ is a restriction to $C$ of a globally convex function $H(y;x,a)$ satisfying $g(y;x,a)\geq H(y;x,a)$ for all $y\in \mathbbm{R}^n$, $x\in\mathbbm{R}^n$.
Then 
\begin{equation}\label{conclu:2}
\sum\limits_{i\in \bar{I}}\lambda_i [s_i+a(y_i-\bar{x})]\in\partial_{\tilde{\epsilon}}g(\bar{x};\bar{x},a),\text{ where }\tilde{\epsilon}=\sum\limits_{i\in \bar{I}}\lambda_iE_i;
\end{equation}
and furthermore, if $0\in\partial_0g(\bar{x};\bar{x},a)$, then $\bar{x}$ is a global minimizer of $g(y;\bar{x},a)$.
\end{lemma}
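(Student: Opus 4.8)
The plan is to apply the Karush--Kuhn--Tucker conditions to \eqref{subppc}. This is a genuine linear program, so its polyhedral feasible set supplies a constraint qualification and the multipliers exist. First I would rewrite the trust-region constraint $\norm{x-\bar{x}}\le\Delta$ coordinatewise as $x_j-\bar{x}_j\le\Delta$ and $\bar{x}_j-x_j\le\Delta$, attaching nonnegative multipliers $\mu_j^+,\mu_j^-$, and attach multipliers $\lambda_i\ge0$ to the cutting-plane constraints \eqref{sub2pc}. Stationarity in $z$ yields $\sum_{i\in I}\lambda_i=1$, and stationarity in $x$ yields, writing $G:=\sum_{i\in\bar{I}}\lambda_i[s_i+a(y_i-\bar{x})]$, the identity $G_j=\mu_j^--\mu_j^+$ for each coordinate $j$, where $\bar{I}$ indexes the active cutting planes. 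Complementary slackness removes the inactive $\lambda_i$, so $G$ is expressed over $\bar{I}$ alone.

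For part (i) I would first record that $m(\bar{x})=f(\bar{x})$ and $m(x^*)=z^*$: indeed $h(\bar{x};\bar{x},a,y_i)=f(\bar{x})-E_i\le f(\bar{x})$ by \eqref{Def:Ei:1} and the standing hypothesis $E_i\ge0$, with equality at $i=\bar{i}$ since $E_{\bar{i}}=0$; and at the LP optimum $z^*$ equals $\max_i h(x^*;\bar{x},a,y_i)=m(x^*)$. Next, for active $i$ we have $z^*=h(x^*;\bar{x},a,y_i)$; taking the $\lambda$-weighted sum and splitting $x^*-y_i=(x^*-\bar{x})+(\bar{x}-y_i)$, the affine slopes recombine into $z^*=\sum_{i\in\bar{I}}\lambda_i h(\bar{x};\bar{x},a,y_i)+\langle G,x^*-\bar{x}\rangle=f(\bar{x})-\sum_{i\in\bar{I}}\lambda_i E_i+\langle G,x^*-\bar{x}\rangle$, whence $f(\bar{x})-z^*=\sum_{i\in\bar{I}}\lambda_i E_i-\langle G,x^*-\bar{x}\rangle$. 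The remaining work is to evaluate $-\langle G,x^*-\bar{x}\rangle$ by a case split on the trust region. When $\norm{x^*-\bar{x}}<\Delta$ every box constraint is inactive, so $\mu_j^\pm=0$, forcing $G=0$ and hence the first branch. When $\norm{x^*-\bar{x}}=\Delta$ I invoke complementary slackness: $\mu_j^+>0$ forces $x_j^*-\bar{x}_j=\Delta$ and $\mu_j^->0$ forces $x_j^*-\bar{x}_j=-\Delta$, with at most one active per coordinate when $\Delta>0$, so
\[
-\langle G,x^*-\bar{x}\rangle=\sum_j(\mu_j^+-\mu_j^-)(x_j^*-\bar{x}_j)=\Delta\sum_j(\mu_j^++\mu_j^-)=\Delta\sum_j|G_j|=\Delta\normo{G},
\]
which is precisely the second branch.

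For part (ii) I would exploit the restriction property. Since $y_i\in\mathrm{int}\,C$ and $g(\cdot;\bar{x},a)$ coincides with the convex minorant $H(\cdot;\bar{x},a)$ on $C$, the subdifferentials agree at $y_i$, so $s_i+a(y_i-\bar{x})\in\partial H(y_i;\bar{x},a)$ and the affine map $h(\cdot;\bar{x},a,y_i)$ is a genuine supporting hyperplane of $H$, giving $H(y;\bar{x},a)\ge h(y;\bar{x},a,y_i)$ for all $y\in\mathbbm{R}^n$. Forming the $\lambda$-weighted combination, writing each affine piece as $h(\bar{x};\bar{x},a,y_i)+\langle s_i+a(y_i-\bar{x}),y-\bar{x}\rangle$ and using $h(\bar{x};\bar{x},a,y_i)=g(\bar{x};\bar{x},a)-E_i$, I obtain $H(y;\bar{x},a)\ge g(\bar{x};\bar{x},a)-\tilde{\epsilon}+\langle G,y-\bar{x}\rangle$ with $\tilde{\epsilon}=\sum_{i\in\bar{I}}\lambda_i E_i$. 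Because $g(y;\bar{x},a)\ge H(y;\bar{x},a)$ globally, this delivers the defining $\tilde{\epsilon}$-subgradient inequality \eqref{conclu:2}. Specializing to $G=0$ and $\tilde{\epsilon}=0$ gives $g(y;\bar{x},a)\ge g(\bar{x};\bar{x},a)$ for all $y$, i.e. the global minimality claim.

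I expect the main obstacle to be the KKT bookkeeping for the $\ell_\infty$ trust region in part (i), specifically establishing $-\langle G,x^*-\bar{x}\rangle=\Delta\normo{G}$. This is exactly the $\ell_1$--$\ell_\infty$ duality surfacing through complementary slackness, and its rigorous handling requires arguing that at an optimal solution at most one of $\mu_j^+,\mu_j^-$ is active in each coordinate (valid once $\Delta>0$) so that $\mu_j^++\mu_j^-=|G_j|$. The convexity argument in part (ii) is then comparatively routine, its only delicate point being the identification $\partial g(y_i;\bar{x},a)=\partial H(y_i;\bar{x},a)$, which needs $y_i\in\mathrm{int}\,C$ so that $g$ and $H$ agree on a full neighborhood of $y_i$.
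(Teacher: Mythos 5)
Your proposal is correct and follows essentially the same route as the paper's proof: part (i) via the KKT conditions of the LP (multiplier sum equal to one, stationarity in $x$, the split $x^*-y_i=(x^*-\bar{x})+(\bar{x}-y_i)$, and the case analysis on the trust region, with complementary slackness delivering $-\langle G,x^*-\bar{x}\rangle=\Delta\normo{G}$ exactly as the paper does with its active-index sets $I_R$, $I_L$), and part (ii) via the restriction property, agreement of subdifferentials on $\mathrm{int}\ C$, supporting hyperplanes of $H$, and the convex combination yielding the $\tilde{\epsilon}$-subgradient inequality. The only differences are cosmetic, such as indexing box multipliers over all coordinates rather than only the active ones.
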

\begin{proof}
(i) By definition $m(\bar{x})=\max\limits_{i\in I}\{h(\bar{x};\bar{x},a,y_i)\}\geq h(\bar{x};\bar{x},a,y_{\bar{i}})$ and since $\bar{x}=y_{\bar{i}}$, we have $h(\bar{x};\bar{x},a,y_{\bar{i}})=f(y_{\bar{i}})=f(\bar{x})$ by the definition of function $h$ in \eqref{cuttingplaneFunc}. As `$a$' is such that $E_i\geq 0$ for all $i\in I$, we get $\max\limits_{i\in I}\{h(\bar{x};\bar{x},a,y_i)\}\leq g(\bar{x};\bar{x},a)=f(\bar{x})$ via the definition of $E_i$ in \eqref{Def:Ei:1}. Consequently, $m(\bar{x})=f(\bar{x})$.
Problem \eqref{subppc} is equivalent to \eqref{subproblemoriginal} in the sense that the optimal solution $(x^*,z^*)$ satisfies $m(x^*)=z^*=\min \{m(x)\big |\norm{x-\bar{x}}\leq\Delta\}$. Therefore $m(\bar{x})-m(x^*) =f(\bar{x})-z^*$.
The $\bar{I}$ in \eqref{modelreductionexpression} is defined by 
\be \label{Ia}
\bar{I}\mathrel{\mathop:}=\left\{i\in I\big |f(y_i)+{\frac{a}{2}}\normt{y_i-\bar{x}}^2+\la s_i+a(y_i-\bar{x}),x^*-y_i\ra = z^*\right\}.
\ee
$\bar{I}$ cannot be empty because $(x^*,z^*)$ has to be on some cutting plane. 
If $\norm{x^*-\bar{x}}=\Delta$, then one of the sets
\be 
I_R\mathrel{\mathop:}=\left\{i\in\{1,\cdots,n\}\big|x^*_i=\bar{x}_i+\Delta\right\},\text{ and }I_L\mathrel{\mathop:}=\left\{i\in\{1,\cdots,n\}\big|x^*_i=\bar{x}_i-\Delta\right\}
\ee
will be nonempty.
As $(x^*,z^*)$ is the optimal solution of linear problem \eqref{subppc}, it satisfies the Karush-Kuhn-Tucker (KKT) conditions; 
that is, there exist multipliers $\lambda_i\geq0,\ i\in\bar{I},\  u_i\geq0,\ i\in I_R,\ \text{and}\ w_i\geq0,\ i\in I_L$ 
such that 
\be\label{KKT1}
\sum\limits_{i\in \bar{I}}\lambda_i[s_i+a(y_i-\bar{x})]+\sum\limits_{i\in I_R}u_ie_i-\sum\limits_{i\in I_L}w_ie_i=0,
\ee
\be\label{KKT2}
1-\sum\limits_{i\in \bar{I}}\lambda_i=0,\ee
where $e_i,\ i=1,\cdots,n$ are vectors in $\mathbbm{R}^n $ with the $i$-th component being 1 and the others 0.
If $\norm{x^*-\bar{x}}<\Delta$, then \eqref{KKT1} simply reduces to 
\be\label{KKT3}
\sum\limits_{i\in \bar{I}}\lambda_i[s_i+a (y_i-\bar{x})]=0.
\ee
We have
\begin{align}\label{derive}
f(\bar{x})-z^* &  =\sum\limits_{i\in \bar{I}}\lambda_i \left[ f(\bar{x})-z^* \right]  \text { (by \eqref{KKT2})}\quad\notag\\
 \ & =\sum\limits_{i\in \bar{I}}\lambda_i \left\{ f(\bar{x})-\left[f(y_i)+{\frac{a }{2}}\normt{y_i-\bar{x}}^2+\la s_i+a (y_i-\bar{x}),x^*-y_i\ra\right]\right\} \notag\text {( by } \eqref{Ia})\\
 &=\sum\limits_{i\in \bar{I}}\lambda_i \left\{f(\bar{x})-\left[f(y_i)+{\frac{a }{2}}\normt{y_i-\bar{x}}^2+\la s_i+a (y_i-\bar{x}),\bar{x} -y_i\ra\right]\right\} \notag\\
&\quad\quad\quad\quad\quad\quad\quad\quad\quad\quad\quad\quad\quad\  - \sum\limits_{i\in \bar{I}}\lambda_i\la s_i+a (y_i-\bar{x}),x^*-\bar{x} \ra.
\end{align}
Consider the first case when $ \norm{x^*-\bar{x}}<\Delta$. Then by \eqref{Def:Ei}, \eqref{KKT3} and \eqref{derive} we have \[f(\bar{x})-z^* =\sum\limits_{i\in \bar{I}}\lambda_iE_i.\]
If $\norm{x^*-\bar{x}}=\Delta$, then by \eqref{Def:Ei}, \eqref{KKT1} and \eqref{derive} we have
\begin{align}\label{derive1}
f(\bar{x})-z^* &  = \sum\limits_{i\in \bar{I}}\lambda_iE_i+\la\sum\limits_{i\in I_R}u_ie_i-\sum\limits_{i\in I_L}w_ie_i,x^*-\bar{x} \ra \notag\\
 & =\sum\limits_{i\in \bar{I}}\lambda_iE_i+\Delta\left(\sum\limits_{i\in I_R}u_i+\sum\limits_{i\in I_L}w_i\right)\ \text {(by definition of } I_R\text{ and }I_L).
\end{align}
On the other hand, 
\begin{align}\label{derive2}
\normo{\sum\limits_{i\in \bar{I}}\lambda_i[s_i+a(y_i-\bar{x})]} &=\normo{\sum\limits_{i\in I_R}u_ie_i-\sum\limits_{i\in I_L}w_ie_i}\text{ (by \eqref{KKT1}) } \notag\\
&=\left(\sum\limits_{i\in I_R}u_i+\sum\limits_{i\in I_L}w_i\right)\ \text {(by definition of } I_R\text{ and }I_L).
\end{align}
Combining \eqref{derive1} and \eqref{derive2} we get \[f(\bar{x})-z^* =\sum\limits_{i\in \bar{I}}\lambda_iE_i+\Delta\normo{\sum\limits_{i\in \bar{I}}\lambda_i[s_i+a(y_i-\bar{x})]}.\]

(ii) By the restriction property in Theorem \ref{conv:th} we have $g(y;\bar x ,a)=H(y;\bar{x},a)$ for all $y\in C$, and $\partial g(y;\bar x ,a)=\partial H(y;\bar{x},a)$ for all $y\in\mathrm{int}\ C$. 
As $H$ is globally convex, $\partial_{\epsilon}g(y;\bar{x},a)$ can be defined by $\partial_{\epsilon}g(y;\bar{x},a)\mathrel{\mathop:}=\partial_{\epsilon}H(y;\bar{x},a)$ for any $y\in \mathrm{int}\ C$.
For all $i\in \bar{I}$, since $y_i\in \mathrm{int}\ C$, we have $s_i+a(y_i-\bar{x})\in\partial H(y_i;\bar{x},a) $. Thus we have that
\begin{align}
g(z;\bar{x},a) & \geq H(z;\bar{x},a)\text{ for all }z\in\mathbbm{R}^n\notag\\
 & \geq  H(y_i;\bar{x},a)+ \la s_i+a(y_i-\bar{x}),z-y_i\ra\text{ for all }i\in \bar{I} \text{ and } z\in\mathbbm{R}^n\notag\\
 & \text{ (by definition of convex subgradient)}\notag\\
  & =g(y_i;\bar{x},a)+ \la s_i+a(y_i-\bar{x}),z-y_i\ra\text{ (by the restriction property)}\notag\\
  & =g(\bar{x};\bar{x},a)+\la s_i+a(y_i-\bar{x}),z-\bar{x}\ra- E_i  \text{ (by definition of }E_i).\label{for:epsilonSub}
\end{align}
The convex combination of \eqref{for:epsilonSub} with $\lambda_i$ satisfying \eqref{KKT2} yields 
\begin{equation}\label{interim}
g(z;\bar{x},a) \geq g(\bar{x};\bar{x},a)+\la \sum\limits_{i\in \bar{I}}\lambda_i [s_i+a(y_i-\bar{x})],z-\bar{x}\ra-\sum\limits_{i\in \bar{I}}\lambda_iE_i, \text{ for all }z\in\mathbbm{R}^n.
\end{equation}
By the definition of $\epsilon-$subdifferential in convex analysis, \eqref{interim} verifies \eqref{conclu:2}.

If $0\in \partial_0g(\bar{x};\bar{x},a)$, then $0\in \partial H(\bar{x};\bar{x},a)$ since $\bar{x}\in \mathrm{int}\ C$ and $\partial H(y;\bar{x},a)=\partial g(y;\bar{x},a)$ for all $y\in \mathrm{int} \ C$. Because $H(y;\bar{x},a)$ is a convex function, its stationary points are also global minimizers. Consequently, for any $z\in\mathbbm{R}^n$, we have $g(\bar{x};\bar{x},a)=H(\bar{x};\bar{x},a)\leq H(z;\bar{x},a)\leq g(z;\bar{x},a)$. We note that another simple way to see this conclusion is that $0\in\partial g(\bar{x};\bar{x},a)=\partial f(\bar{x})$ which is equivalent to $\bar{x}\in P_a(\bar{x})$.
\end{proof}
\begin{remark}
	
(i) 
The conclusions in Lemma \ref{reduction:lemma} are very similar to those of the classical bundle methods. The convex combination of subgradients, $\sum\limits_{i\in \bar{I}}\lambda_is_i$ (or $\sum\limits_{i\in \bar{I}}\lambda_i\left[s_i+a(y_i-\bar{x})\right]$) is involved in both situations and is sometimes termed an \emph{aggregate subgradient}. The \emph{subgradient aggregation} technique was developed in \cite{kiwiel1985methods} where aggregate subgradients together with the convex combinations of linearization errors are used to represent additional virtual cutting planes in the model. Subgradient aggregation technique has many applications including preventing unbounded storage caused by too many cutting planes. 
\\

(ii) It is not difficult to see that the model reduction is always nonnegative provided that $E_i\geq 0$ for all $i
\in {I}$. From the definition of $E_i$ in \eqref{Def:Ei:1}, this can be guaranteed by choosing $a\geq a^{\min}$ which satisfies \eqref{con:con}.

(iii) The expression of model reduction in \eqref{modelreductionexpression}  can also be stated as 
\begin{equation}
m(\bar{x})-m(x^*)=\sum\limits_{i\in \bar{I}}\lambda_iE_i+\Delta
\normo{\sum\limits_{i\in \bar{I}}\lambda_i [s_i+a(y_i-\bar{x})]}
\end{equation}
with $\sum\limits_{i\in \bar{I}}\lambda_i [s_i+a(y_i-\bar{x})]=0$ if $\norm{x^*-\bar{x}}<\Delta$. 
\end{remark}
Lemma \ref{reduction:lemma} implies that the model reduction can somehow help us to determine whether $\bar{x}$ is a good estimate of a stationary point. 
If $f$ is convex, generally speaking, a good estimate $\bar{x}$ of a minimizer of  $f$ should satisfy that both $\min\limits_{g\in\partial f(\bar{x})} ||g||$ and $f(\bar{x})-f(p)$ are very small, where $p$ is a minimizer of $f$. 
In the convex case $\frac{f(\bar{x})-f(p)}{\norm{\bar{x}-p}}$ can be a lower bound for $\min\limits_{g\in\partial f(\bar{x})} ||g||$. Motivated by this, in the next lemma we try to relate the model reduction with the above two approximate measures of a good estimate of a stationary point in the nonconvex case through the restricted convexity. 
We will use the following set which essentially defines the model $m(x)$.
\begin{equation}\label{Fset}
F\mathrel{\mathop:}=\{y_i\ |\ i\in \hat{I}\}, \text{ where }\hat{I}\mathrel{\mathop:}=\{i\in I\ | \exists\ x\in\mathbbm{R}^n\ \text{such that} \ m(x)=h(x;\bar{x},a,y_i)\}.
\end{equation}
\begin{lemma}\label{local:mod}
Given a prox-center $\bar{x}\in\text{lev}_{x^0}f$, a trust region radius $\Delta$ and an index set $I$ containing some $\bar{i}$ such that $\bar{x}=y_{\bar{i}}$. 
Let $m(x)$ defined in \eqref{model} be the objective function of problem \eqref{subproblemoriginal}, `$a$' be such that $E_i\geq 0,\ \forall \ i\in I$, $x^*$ be the first component of an optimal solution of problem \eqref{subppc}, and $F$ be defined in \eqref{Fset}.

If `$a$'  is such that $g(x;\bar{x},a)$ is a restriction to set $F$ of $H(x)$, where $H(x)$ is convex on $\mathbbm{R}^n$ and satisfies $H(x)\leq g(x)$ for all $x\in\mathbbm{R}^n$.
Then \\
(a) $m(x)$ is a cutting-plane model of $H(x)$ and satisfies $m(x)\leq H(x),\ \forall\ x\in\mathbbm{R}^n$;\\
(b) if $p\in P_{a}(\bar{x})$ and $\bar{x}\not\in P_{a}(\bar{x})$, then  
\begin{equation}\label{mod:rd1}
m(\bar{x})-m(x^*)\geq [f(\bar{x})-e_{a}(\bar{x})]\min\{\frac{\Delta}{\norm{\bar{x}-p}},1\}\geq 0.
\end{equation}
\end{lemma}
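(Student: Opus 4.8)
The plan is to dispatch part (a) first, since it supplies the two facts that drive part (b): that $m$ is convex and that $m$ lies below $H$. For (a), note that $m(x)=\max_{i\in I}h(x;\bar{x},a,y_i)$ is a pointwise maximum of affine functions, hence convex, and only the active planes (those indexed by $\hat{I}$, generating $F$) matter for its value. For each $i\in\hat{I}$ the cutting plane has slope $v_i\mathrel{\mathop:}=s_i+a(y_i-\bar{x})\in\partial g(y_i;\bar{x},a)$. I would invoke the restriction property exactly as in the proof of Lemma \ref{reduction:lemma}(ii): since $g(\cdot;\bar{x},a)$ coincides with the globally convex $H$ on $F$ (and, via the interiority needed for subgradient agreement, $\partial g(y_i;\bar{x},a)=\partial H(y_i;\bar{x},a)$), one gets $v_i\in\partial H(y_i;\bar{x},a)$ in the convex sense and $g(y_i;\bar{x},a)=H(y_i;\bar{x},a)$. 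Convexity of $H$ then gives $h(x;\bar{x},a,y_i)=H(y_i;\bar{x},a)+\la v_i,x-y_i\ra\leq H(x;\bar{x},a)$ for every $x$. Taking the maximum over $i\in\hat{I}$ yields $m(x)\leq H(x)$ on $\mathbbm{R}^n$ and exhibits $m$ as a cutting-plane model of $H$, which is claim (a).

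For (b), set $r\mathrel{\mathop:}=\norm{\bar{x}-p}$ and $t\mathrel{\mathop:}=\min\{\Delta/r,1\}$; here $r>0$ because $\bar{x}\not\in P_{a}(\bar{x})$ forces $p\neq\bar{x}$. The interpolated point $q\mathrel{\mathop:}=\bar{x}+t(p-\bar{x})$ satisfies $\norm{q-\bar{x}}=t\,r\leq\Delta$, so it is feasible for \eqref{subproblemoriginal}; since $x^*$ minimizes $m$ over the trust region, $m(x^*)\leq m(q)$. Writing $q=(1-t)\bar{x}+t\,p$ and using convexity of $m$ from part (a) gives $m(q)\leq(1-t)m(\bar{x})+t\,m(p)$, whence
\[
m(\bar{x})-m(x^*)\geq m(\bar{x})-m(q)\geq t\,[m(\bar{x})-m(p)].
\]

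To finish I would evaluate the two endpoints. By Lemma \ref{reduction:lemma}(i) (same hypotheses $\bar{x}=y_{\bar{i}}$ and $E_i\geq 0$) we have $m(\bar{x})=f(\bar{x})$, while part (a) together with the hypothesis $H\leq g$ gives $m(p)\leq H(p;\bar{x},a)\leq g(p;\bar{x},a)=e_{a}(\bar{x})$, the last equality holding because $p\in P_{a}(\bar{x})$. Hence $m(\bar{x})-m(p)\geq f(\bar{x})-e_{a}(\bar{x})$, and this quantity is strictly positive since $\bar{x}\not\in P_{a}(\bar{x})$ is equivalent to $e_{a}(\bar{x})<f(\bar{x})$. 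Substituting into the displayed chain and using $t\geq 0$ gives
\[
m(\bar{x})-m(x^*)\geq t\,[f(\bar{x})-e_{a}(\bar{x})]=[f(\bar{x})-e_{a}(\bar{x})]\min\Bigl\{\tfrac{\Delta}{\norm{\bar{x}-p}},1\Bigr\}\geq 0,
\]
which is \eqref{mod:rd1}.

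I expect the only genuinely delicate step to be the subgradient identification inside part (a): turning "$v_i\in\partial g(y_i;\bar{x},a)$ with $g=H$ near $y_i$" into "$v_i$ is a convex subgradient of $H$ at $y_i$," so that each active plane truly supports $H$ from below. This is exactly where the restriction property and the interiority of the generating points are needed, and it must be handled with the same care as in Lemma \ref{reduction:lemma}(ii) (merely knowing $H\leq g$ with equality at a point is not enough). Once $m\leq H$ and the convexity of $m$ are in hand, part (b) is just the one-line trust-region/convexity estimate above, with the feasibility of the interpolated point $q$ being the only other thing to check.
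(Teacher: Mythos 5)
Your proof is correct and takes essentially the same route as the paper: part (a) rests on the restriction property making each active cutting plane a supporting plane of the convex function $H$ (hence $m\leq H$), and part (b) uses the interpolated point on the segment $[\bar{x},p]$, feasibility and optimality of $x^*$, $m(\bar{x})=f(\bar{x})$ from Lemma \ref{reduction:lemma}(i), and $m(p)\leq H(p)\leq g(p)=e_{a}(\bar{x})$. The only cosmetic differences are that you merge the paper's two cases ($p$ inside versus outside the trust region) into one via $t=\min\{\Delta/\norm{\bar{x}-p},1\}$ and invoke convexity of $m$ directly instead of passing through $H(x^c)$ as the paper does; both are valid, and your explicit attention to the subgradient identification in (a) (needing interiority as in Lemma \ref{reduction:lemma}(ii), since $H\leq g$ with equality at a point alone does not suffice) is, if anything, more careful than the paper's own wording.
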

\begin{proof}
(a) We see that $\hat{I} $ indexes all the cutting planes that sufficiently define $m(x)$. The model $m(x)$ is essentially the pointwise maximum of cutting planes of $g(x)$ generated at bundle points where the value of $g$ and $H$ coincide, and hence $m(x)$ is also a cutting-plane model of $H(x)$. Since $H(x)$ is convex, by proposition \ref{con:prop} it is a lower approximation of $H$. This finishes the proof of (a).\\
(b) The proof of \eqref{mod:rd1} can be divided into two parts based on the possible positions of $p$. 
First, suppose $p$ is located in the trust region, i.e. $\norm{\bar{x}-p}\leq \Delta$, which yields $\min\{\frac{\Delta}{\norm{\bar{x}-p}},1\}=1$. To show \eqref{mod:rd1} it suffices to show $m(\bar{x})-m(x^*)\geq f(\bar{x})-e_{a}(\bar{x})$.   
By Lemma \ref{reduction:lemma} (i), $m(\bar{x})=f(\bar{x})$, and hence we only need to show $m(x^*)\leq e_{a}(\bar{x})$. From the optimality of $x^*$, conclusion (a), the fact that $H(x)\leq g(x)$ and \ref{pro:map}, we have 
\[m(x^*)\leq m(p)\leq H(p)\leq g(p)= e_{a}(\bar{x}).\]
Second, suppose $p$ is outside the trust region, i.e. $\norm{\bar{x}-p}>\Delta$, which yields $\min\{\frac{\Delta}{\norm{\bar{x}-p}},1\}=\frac{\Delta}{\norm{\bar{x}-p}}$. To show \eqref{mod:rd1} it suffices to show
\begin{equation}\label{mid:mod2}
m(\bar{x})-m(x^*)\geq \frac{f(\bar{x})-e_{a}(\bar{x})}{\norm{\bar{x}-p}}\Delta. 
\end{equation}
We consider the point $x^c=\bar{x} +\frac{\Delta}{\norm{\bar{x}-p}}(p-\bar{x})$, the  intersection point of trust region and the line segment $[\bar{x},p]$. By the optimality of $x^*$, the result (a), the convexity of $H$, the fact that $H(x)\leq g(x)$, (\ref{pro:map}) and the fact that $g(\bar{x})=f(\bar{x})$, we have
\begin{eqnarray*}
m(x^*)\leq m(x^c)\leq H(x^c)\leq \frac{\Delta}{\norm{\bar{x}-p}}H(p)+(1-\frac{\Delta}{\norm{\bar{x}-p}})H(\bar{x})\\
\leq \frac{\Delta}{\norm{\bar{x}-p}}g(p)+(1-\frac{\Delta}{\norm{\bar{x}-p}})g(\bar{x})\\
= \frac{\Delta}{\norm{\bar{x}-p}}e_{a}(\bar{x})+(1-\frac{\Delta}{\norm{\bar{x}-p}})f(\bar{x}).
\end{eqnarray*}
Then \eqref{mid:mod2} can be verified using $f(\bar{x})=m(\bar{x})$.
\end{proof} 

\subsection{Update of the Model}\label{sec:updM} 
At the end of a certain iteration (either major or minor), we 
need to update the model and prepare the data for the new LP in next iteration. 
The update of the model is supposed to improve the model. 
The update includes adding new cutting planes and deleting old cutting planes, i.e. adding and removing points from the set $\Omega\mathrel{\mathop:}=\{y_i\ |\ i\in I\}$. We also take into account the update of convexification parameter $a$ and $a^{\min}$ when considering updating the model, as $a$ is also part of the model. 

In our method, we always add one cutting plane at the end of each iteration. Specifically, at the end of a major iteration, we obtain $x^{k+1}$ as our new prox-center. A cutting plane will be needed to generate at this point, and hence we add $x^{k+1} $ to $\Omega$ so that there exists a $\bar{i}\in I$ such that $y_{\bar{i}}=x^{k+1}$. At the end of a minor iteration, we obtain $x^{kl}$ which did not yield sufficient reduction of the objective function. A cutting plane is supposed to be generated at this point to improve the quality of the model. However $x^{kl}$ could be very bad in the sense that $f(x^{kl})$ is too far away from $f(x^k)$ or even bigger than $f(x^0)$. In this case we backtrack along the direction $x^{kl}-x^k$ until we find a point whose funciton value is less than some upper bound $f^k_u$. This is a finite process provided that $f^k_u>f^k$, as we will prove in Lemma \ref{backtrack:lemma}.

After backtrack, we add the point found into $\Omega$, and consequently, all our new bundle points, i.e. those that are generated in iteration $(k,l)$ for some $l$, will be in lev$_{f^k_u}f$. 
However the old bundle points, i.e. those generated in $(k-j,l')$ for some $j$ and $l'$, can still be outside lev$_{f^k_u}f$. At the end of iteration $(k,l_k)$, we remove 
the old bundle points whose function values are greater than $f^k_u$. Finally, before we enter iteration $k+1$ we move the upper bound closer to $f(x^{k+1})$ by setting $f^{k+1}_u\law \alpha_3f^k+(1-\alpha_3)f^k_u$ with some $\alpha_3\in (0,1)$.   

We see the bundle point set $\Omega$ is updated dynamically so that at any iteration $(k,l)$, $\Omega\subseteq \text{lev}_{f^{k-2}_u}f$. 
This setting is related to the convexification process. In Lemma \ref{local:mod} the value of parameter $a$ such that $g$ is a restriction of a convex function depends on the set $F\subseteq\Omega$. The following lemma states the existence of such value.  
\begin{lemma}\label{lm:34}
Suppose that Assumption \ref{assump} holds. 
Given a prox-center $\bar{x}\in\text{lev}_{x^0}f$ and some $f_u\in(f(\bar{x}),f(x^0)] $, consider the model $m(x)$ defined in \eqref{model} with bundle points $y_i\in \Omega$ satisfying $y_i\in\text{lev}_{f_u}f$ for all $i\in I$. 
Let $D$ be a                                                                   compact set such that $D\supseteq F$ and $\text{int }D\not=\emptyset$.
There exists a threshold $a^{th}(\bar{x},I)\geq 0$ such that for all $a\geq a^{th}(\bar{x},I)$, $%
g(y;x,a)$ is the restriction to $D$ of a globally convex function $H(y;x,a)$ satisfying $g(y;x,a)\geq H(y;x,a)$ for all $y\in\mathbbm{R}^n$ and $x\in D$.
\end{lemma}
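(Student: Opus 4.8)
The plan is to recognize Lemma \ref{lm:34} as a localized restatement of Theorem \ref{conv:th}, obtained by letting the compact set $D$ play the role of the level set $\text{lev}_{x^0}f$, and to supply the threshold $a^{th}(\bar{x},I)$ from the finite-subcover argument already used in Lemma \ref{convexific}. First I would record the two structural facts that make this possible. Since $f_u\le f(x^0)$, every bundle point satisfies $y_i\in\text{lev}_{f_u}f\subseteq\text{lev}_{x^0}f$, so the set $F$ is a \emph{finite} subset of $\text{lev}_{x^0}f$; consequently $f$ is locally Lipschitz and prox-regular at each point of $F$, and (taking $D$ inside $\text{lev}_{x^0}f$) at each point of the compact set $D$. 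Together with $\text{int }D\neq\emptyset$, this is exactly the hypothesis package that Theorem \ref{conv:th} demands of $\text{lev}_{x^0}f$.

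Next I would reproduce the covering step of Lemma \ref{convexific} with $D$ in place of the level set. By Proposition \ref{prox-para}, for each $x\in D$ there exist $\epsilon(x)$ and $a(x)$ making $g(\cdot;x,a)$ convex on $B(x,\epsilon(x))$ whenever $a\ge a(x)$; compactness of $D$ extracts a finite subcover $\{\text{int }B(x_i,\epsilon(x_i))\}_{i=1}^m$, and $\bar{a}_D:=\max_i a(x_i)$ renders $g$ locally convex throughout $D$ for every $a\ge\bar{a}_D$. I would then run the construction in the proof of Theorem \ref{conv:th} verbatim on $D$ to manufacture a globally convex $H(y;x,a)$ that coincides with $g$ on $D$ and minorizes $g$ on all of $\mathbbm{R}^n$; the resulting threshold, which depends on $D$ and hence on the current data $(\bar{x},I)$ through $F$, is the claimed $a^{th}(\bar{x},I)$. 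Finally, the passage from the single value $a^{th}(\bar{x},I)$ to all larger $a$ is immediate: writing $g(y;x,a)=g(y;x,a^{th}(\bar{x},I))+\tfrac{a-a^{th}(\bar{x},I)}{2}\normt{y-x}^2$ adds only a nonnegative globally convex term, so the restriction property is inherited, exactly as in the closing line of Lemma \ref{convexific}.

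The main obstacle is this construction step, i.e.\ upgrading local convexity on the compact $D$ to a \emph{single} globally convex function on $\mathbbm{R}^n$ that both agrees with $g$ on $D$ (including its boundary) and stays below $g$ everywhere off $D$. This is precisely the content of Theorem \ref{conv:th}, so the real work is to verify that its proof never uses that $\text{lev}_{x^0}f$ is a sublevel set, but only that the set is compact, has nonempty interior, and carries prox-regularity and local Lipschitz continuity, all of which the first paragraph guarantees for $D$. I would also flag one delicate point needing care: the global minorization $g\ge H$ forces $D$ to be taken within $\text{lev}_{x^0}f$ (or the argument restricted to $D\cap\text{lev}_{x^0}f$), since outside the level set $f$ is only locally Lipschitz and may fail to admit any convexifying quadratic, in which case no globally convex function could agree with $g$ there.
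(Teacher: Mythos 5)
Your proposal is correct and follows essentially the same route as the paper: the paper's own proof likewise reduces Lemma \ref{lm:34} to Corollary \ref{cor:convex} and the proof of Theorem \ref{conv:th}, observing that that argument works verbatim with any compact subset of $\text{lev}_{x^0}f$ having nonempty interior in place of the level set itself. Your closing caveat that $D$ must lie inside $\text{lev}_{x^0}f$ is precisely the point the paper handles by noting $F\subseteq\text{lev}_{x^0}f$ and taking $D$ to be the smallest compact set containing $F$.
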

\begin{proof}
This lemma is an extension of Theorem \ref{conv:th}. Since $y_i\in\text{lev}_{f_u}f$ for all $i\in I$ and $f_u\in(f(\bar{x}),f(x^0)] $ we have by \eqref{Fset} that $F\subseteq \text{lev}_{x^0}f$. As $F$ is a finite set and $D$ is the smallest compact set containing $F$ we have $D\subseteq \text{lev}_{x^0}f$. 
From Corollary \ref{cor:convex} and the poof of Theorem \ref{conv:th} in the appendix, we can see that the same conclusion holds true when we replace $\text{lev}_{x^0}f$ by any of its compact subset that has nonempty interior. In our case each $(\bar{x}, I)$ corresponds to a set $D\subseteq \text{lev}_{x^0}f$ and for each $D$ there exists a threshold $a^{th}(\bar{x},I) $ satisfying the 
corresponding conditions.
\end{proof}
From Lemma \ref{lm:34} we see that the condition for `$a$' in Lemma \ref{local:mod} can be satisfied if we take $a\geq a^{th}(\bar{x},I)$.
\subsection{The LPBNC Algorithm}
For the trust region update we follow the procedure \ref{btrutr}.  In our algorithm  in order to distinguish the prox-center we differentiate major iterations and minor iterations. 
When we set a new prox-center $x^{k+1}$ it is also the last minor iteration point denoted by $x^{kl_k}$. As we have an infinite sequence of iterations, the following two situations can happen. 
First, there are infinite number of major iterations with each loop of minor iteration to be finite. The sequence can be described as follows:
\begin{subequations}\label{itersequence:1}
\begin{align}
x^0 ,\ x^{00},\ x^{01},\cdots,\ x^1(=x^{0l_0}),\ x^{10},\ x^{11},\ \cdots,\ x^k,\ x^{k0},\ x^{k1},\ \cdots,\ x^{k+1}(=x^{kl_k}),\ \cdots\cdots.\label{itersequence:1A}
\end{align}
\end{subequations}
Second, there are finite number of major iterations with the last major iteration containing infinite minor iterations, which can be described as:
\be
x^0 ,\ x^{00},\ x^{01},\cdots,\ x^1(=x^{0l_0}),\ x^{10},\ x^{11},\ \cdots,\ x^k,\ x^{k0},\ x^{k1},\ \cdots,\ x^{kl},\ x^{k(l+1)},\ \cdots\cdots.\tag{\ref{itersequence:1}b}
\ee
A set of similar notations goes for the sequences of parameters $(a,\Delta)$. Starting from $(a^0_0,\Delta^0_0)$, each pair $(a^k_l,\Delta^k_l)$ is used to produce $x^{kl}$; and if $l=l_k$ we say $x^{kl_k}$ was produced by $(a^k_{l_k},\Delta^k_{l_k})$. To alleviate notation we drop the subscripts of $(a^k_l,\Delta^k_l)$ in the Algorithm \ref{LPBNC3} below and also in later analysis we define $m^k_l(x)\mathrel{\mathop:}=m(x;x^k,a^k_l,I(k,l))$ for all $k$ and $l$ and note the value of $m^k_l(x)$ is dependent on $x^k,a^k_l$ and $I(k,l)$.

\LinesNumbered
\begin{algorithm}
\label{LPBNC3}
\caption{LPBNC
}
\KwData{Final accuracy tolerance $\epsilon_{\mathrm{tol}}$, maximum trust region radius  $\Delta_{\max}$, initial trust region $\Delta^0_0\in(0,\Delta_{\max})$, 
initial point $x^0 $, trust region parameters $0<\eta_1<\eta_3<1$ and $0<\alpha_1<1<\alpha_2$, backtrack parameter $\beta\in(0,1)$, 
parameter 
$\sigma\in\mathbb{R}_{\geq 1}$ and increasing parameter for convexification parameter $\gamma\in[2,10]$.}
\textbf{Initialization} major and minor iteration counter $(k,l)\law (0,0)$, initial convexification parameter $a\law0$, $a^{\min}\law 0$, add $x^k$ into $\Omega$, generate a cutting plane of $g(y;x^k,a) $ at $x^k$, prepare the information for the first LP, $f^0_u \law f(x^0)$\;    
solve the linear programming subproblem \eqref{subppc} with $\bar{x},\ I$ replaced by $x^k,\ I(k,l)$ and obtain an optimal solution $(x^{kl},z^{kl} )$\label{line2z}\;
\If{$f(x^k)-z^{kl}\leq(1+|f(x^k)|)\epsilon_{\rm tol}$}{STOP; 
}
\eIf{$\rho_l^k=\frac{f(x^k)-f(x^{kl})}{f(x^k)-z^{kl}}\geq \eta_1$\label{rho:serious2}}
{$serious\law1$, 
$x^{k+1}\law x^{kl}$, $l_k\law l$\label{set:xkplus12}\;
\tcc{update trust region radius for major iteration}
\If
{$\rho^k_l>\eta_3$ \textbf{ and }$\norm{x^{kl}-x^k}>0.9\Delta $}{$\Delta\law \min\{\alpha_2\Delta ,\Delta_{\max}\}$\label{increase:Del}}
for all $w\in\Omega$ if $w\not\in\text{lev}_{f_u ^k}f$, delete $w$ from $\Omega$ and update the index set $I(k,l)$ by deleting the index whose corresponding cutting plane is generated at $w$
\label{del:cuts}\;
$f_u ^{k+1}\law \alpha_3 f(x^{k+1})+(1-\alpha_3)f_u ^k$
}
{$serious\law 0$\;
\tcc{update trust region radius for minor  iteration}
\If{$ \rho^k_l<-\frac{1}{\min\{1,\Delta \}}$\label{judgerho}}{$\Delta\law\alpha_1 \Delta $\label{decr:del}}
\If (\tcp*[h]{backtrack 
}){$k>0$ and $f(x^{kl})>f_u ^k$}
	{$\bm{d}\law x^{kl}-x^k$;$j\law 1$\;
		\While{$f(x^k+\beta^j\bm{d})>f_u ^k$}
			{$j\law j+1$}
	$\bm{\bar{y}}\law x^k+\beta^j\bm{d}$}
}
add $x^{k+1}$ in major iteration, or add $x^{kl}$, or $\bar{y}$ if backtrack was performed, as a new bundle point $y_i$ into $\Omega$,  update $a^{\min}$ according to its definition in \eqref{a:min}\;
\uIf{ $a<a^{\min}$}
{$a\law \max\left\{a^{\min},\gamma a \right\}$\label{updateA3}}
\ElseIf{$a^{\min}>0$ \textbf{and }$a\geq \sigma a^{\min}$}{$a\law(a+a^{\min})/2$\label{updateA4}}
\eIf{serious}{generate a cutting plane of $g(y;x^{k+1},a) $ at $x^{k+1}$, and add the cutting-plane function to the model. (The model becomes $m^{k+1}_0$ and the cutting plane index set becomes $I(k+1,l)$.)
$k\law k+1$, $l\law 0$\;
}{generate a cutting plane of $g(y;x^{k},a) $ at the new bundle point, and add the cutting-plane function to the model. (The model becomes $m^{k}_{l+1}$ and the cutting plane index set becomes $I(k,l+1)$.)

 $l\law l+1$}
  update the coefficient matrix in LP subproblem \eqref{subppc}\;
 continue to next iteration by going to line \ref{line2z}
\end{algorithm}
\newpage
\begin{remark}\label{f:dec}
\begin{enumerate}
\item The value of $a$ used in the LP subproblem \eqref{subppc} can always guarantee $E_i\geq 0$ for all $i\in I(k,l)$.
\item \label{rem2}We set $x^{k+1}$ as $x^{kl}$ if $\rho^k_l\geq \eta_1$. 
This implies $f(x^{k+1})<f(x^k)$ and thus $x^k\in\text{lev}_{x^0}f$ for all $k$. 
\item At the beginning of iteration $k$, a cutting plane of $g(\cdot;x^k,a)$ is generated at $x^k$. From the update of $f^k_u$ we see that $f(x^k)\leq f^k_u$ for all $k$. At the end of iteration $k$, $x^k$ will not be deleted from $\Omega$ and consequently $x^k$ is always indexed in $I(k,l)$ for all $0\leq l\leq l_k$.  
\end{enumerate}
\end{remark}
\subsection{LPBNC is well-defined}

The following lemma shows that if $x^{kl}$ is not located in $\text{lev}_{f^k_u}f$ then after finite backtrack along the direction $x^{kl}-x^k$, we can reach an auxiliary point in $\text{lev}_{f^k_u}f$. 
\begin{lemma}\label{backtrack:lemma}
If at iteration $(k,l)$ we have $f(x^{kl})>f^k_u$ and $k>0$, then there exists an integer $j'$ such that $\bar{y}=x^k+\beta^{j'}\bm{d}$ and  $\bar{y}\in\text{lev}_{f^k_u}f$ with $\bm{d}=x^{kl}-x^k$.  
\end{lemma}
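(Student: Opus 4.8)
The plan is to combine the continuity of $f$ with a strict separation $f(x^k)<f^k_u$, so that the backtracking point $x^k+\beta^j\bm{d}$, which converges to $x^k$ as $j$ grows, is eventually pushed below the threshold $f^k_u$ and thus lands in $\text{lev}_{f^k_u}f$.

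First I would establish the strict inequality $f(x^k)<f^k_u$ for every $k>0$; this is exactly where the hypothesis $k>0$ enters. From the update rule one has $f^k_u=\alpha_3 f(x^k)+(1-\alpha_3)f^{k-1}_u$ with $\alpha_3\in(0,1)$, whence
\[
f^k_u-f(x^k)=(1-\alpha_3)\bigl(f^{k-1}_u-f(x^k)\bigr).
\]
By Remark \ref{f:dec} we have $f(x^k)<f(x^{k-1})\le f^{k-1}_u$, so the right-hand side is strictly positive and $f(x^k)<f^k_u$. For $k=0$ one only has $f^0_u=f(x^0)$, giving equality, which is precisely why the lemma excludes this case.

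Next, since $\beta\in(0,1)$ we have $\beta^j\to 0$, hence $x^k+\beta^j\bm{d}\to x^k$ as $j\to\infty$. Because $f$ is locally Lipschitz continuous by Assumption \ref{assump}, it is continuous, so $f(x^k+\beta^j\bm{d})\to f(x^k)$. Given the strict gap $f(x^k)<f^k_u$, the definition of convergence supplies an integer $j'$ with $f(x^k+\beta^{j'}\bm{d})\le f^k_u$; setting $\bar{y}=x^k+\beta^{j'}\bm{d}$ then yields $\bar{y}\in\text{lev}_{f^k_u}f$ and shows the \texttt{while} loop terminates after finitely many steps.

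The only genuine obstacle is the strict inequality $f(x^k)<f^k_u$: without strictness the values $f(x^k+\beta^j\bm{d})$ could approach the threshold $f^k_u$ from above and never cross it, so the backtracking loop would fail to terminate; everything else is a routine continuity argument. It is worth checking that the chain $f(x^k)<f(x^{k-1})\le f^{k-1}_u$ is available throughout iteration $(k,l)$ for every $l$, i.e. that $f^{k-1}_u$ and $f(x^{k-1})$ are fixed before the current major iteration begins. This holds because $f^k_u$ is assigned once, at the transition into major iteration $k$, and is not altered during the minor iterations.
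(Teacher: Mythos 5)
Your proof is correct and follows essentially the same route as the paper: both arguments first establish the strict gap $f^k_u-f(x^k)>0$ for $k>0$ from the update recursion $f^k_u=\alpha_3 f(x^k)+(1-\alpha_3)f^{k-1}_u$ together with the monotone decrease of $\{f(x^k)\}$, and then conclude that the backtracking points must eventually fall below $f^k_u$. The only (immaterial) difference is that the paper phrases the second step as a contradiction using the Lipschitz bound $f(x^k+\beta^j\bm{d})-f(x^k)\leq\beta^j\norm{\bm{d}}\bar{L}$, while you invoke plain continuity of $f$ along $x^k+\beta^j\bm{d}\to x^k$, which suffices equally well.
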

\begin{proof}
From the algorithm we see $f^0_u=f(x^0)>f(x^k)$ and $f^k_u=\alpha_3f(x^k)+(1-\alpha_3)f^{k-1}_u$ if $k>0$. Hence, $f^k_u-f(x^k)>0$ for all $k>0$. 
Suppose for contradiction that $f(x^k+\beta^j\bm{d})>f^k_u$ for all integer $j>0$.  Hence, $0<f^k_u-f(x^k)<f(x^k+\beta^j\bm{d})-f(x^k)\leq\beta^j\norm{\bm{d}}\bar{L}$ where $\bar{L}$ is the Lipschitz constant of $f$. Therefore $\beta^j>\frac{f^k_u-f(x^k)}{\norm{\bm{d}}\bar{L}}\mathrel{\mathop:}=c_1$. Since $\beta\in(0,1)$ clearly this cannot be true for all $j>0$; a contradiction.
\end{proof}
We want to show that the algorithm LPBNC is well defined by showing that the inner loop (loop of minor iterations) can terminate finitely and that if it does not terminate finitely, then we have already found a stationary point of $f$.  
\begin{lemma}\label{modnondecr}
Suppose $\rho^k_l<\eta_1$ for some $k,\ l$. Then 
\[m^k_{l+1}(x^{k,l+1})\geq m^k_l(x^{kl}) \]
\end{lemma}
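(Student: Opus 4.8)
The plan is to read both quantities as optimal values of the box-constrained linear program \eqref{subppc}. Since $\rho^k_l<\eta_1$ we are in the minor (null-step) branch, so the prox-center is unchanged and $m^k_l(x^{kl})=z^{kl}=\min\{m^k_l(x):\norm{x-x^k}\le\Delta^k_l\}$, while $m^k_{l+1}(x^{k,l+1})=z^{k,l+1}=\min\{m^k_{l+1}(x):\norm{x-x^k}\le\Delta^k_{l+1}\}$, with $m^k_l(x^k)=f(x^k)$ by Lemma \ref{reduction:lemma}(i). The goal is the classical fact that the subproblem value cannot decrease across a null step. Two structural features make this plausible: in the minor branch no cutting plane is deleted (deletion occurs only on a serious step), so the index set only grows, $I(k,l)\subseteq I(k,l+1)$; and the radius does not increase, $\Delta^k_{l+1}\le\Delta^k_l$, since on a null step Procedure \ref{btrutr} either leaves $\Delta$ untouched or multiplies it by $\alpha_1<1$ (the enlarging branch needs $\rho^k_l>\eta_3>\eta_1$, which is excluded).

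First I would dispose of the case in which the convexification parameter is unchanged, $a^k_{l+1}=a^k_l$. Then every cutting plane of $m^k_l$ reappears in $m^k_{l+1}$, so $m^k_{l+1}\ge m^k_l$ pointwise, and minimizing a larger function over a smaller box gives
\[
z^{k,l+1}=\min_{\norm{x-x^k}\le\Delta^k_{l+1}}m^k_{l+1}(x)\ge\min_{\norm{x-x^k}\le\Delta^k_{l+1}}m^k_l(x)\ge\min_{\norm{x-x^k}\le\Delta^k_l}m^k_l(x)=z^{kl},
\]
so the claim follows in one line, whether the added bundle point is $x^{kl}$ or the backtracked $\bar y$ of Lemma \ref{backtrack:lemma}.

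The hard part is that $a$ is refreshed at the end of \emph{every} iteration (lines \ref{updateA3}--\ref{updateA4}), so generically $a^k_{l+1}\neq a^k_l$, and \emph{every} cutting plane depends on $a$. Writing $h(x;x^k,a,y_i)=f(y_i)+\la s_i,x-y_i\ra+a\,c_i(x)$ with $c_i(x)=\frac{1}{2}\normt{x-x^k}^2-\frac{1}{2}\normt{x-y_i}^2$ (affine in $x$, since the quadratic terms cancel), one sees that $\partial h/\partial a=c_i(x)$ changes sign: close to $x^k$ a plane generated at a far-away $y_i$ is \emph{pushed down} as $a$ grows. Hence when $a^k_{l+1}>a^k_l$ the new model need not dominate the old pointwise, and the one-line argument breaks.

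To rescue monotonicity I would not compare the models pointwise but instead bound $z^{k,l+1}$ from below using the exact plane added at the new bundle point together with the convex-minorant structure. By construction this plane is exact there for $g(\cdot;x^k,a^k_{l+1})$, and by Lemma \ref{local:mod}(a) the whole new model is still a cutting-plane minorant of a convex $H$ with $m^k_{l+1}\le H\le g(\cdot;x^k,a^k_{l+1})$. Combining this with the explicit reduction formula \eqref{modelreductionexpression}, applied at both parameters and tracking the increments of the linearization errors \eqref{Def:Ei:1} (which differ by $\frac{1}{2}(a^k_{l+1}-a^k_l)\normt{y_i-x^k}^2$) and the matching change in the aggregate subgradient $\sum_{i\in\bar I}\lambda_i[s_i+a(y_i-x^k)]$, should let me control $z^{k,l+1}-z^{kl}$. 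The main obstacle, where I expect to spend most of the effort, is exactly this bookkeeping in the increasing-$a$ case: the newly added exact plane raises the model at the old trial point while the parameter change lowers planes near $x^k$, and the two effects must be shown to net out in favour of $z^{k,l+1}\ge z^{kl}$, using $E_i\ge0$ (guaranteed by $a\ge a^{\min}$, Remark \ref{f:dec}) and the convexity of $H$.
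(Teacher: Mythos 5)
Your first case (the update with $a^k_{l+1}=a^k_l$) is, in substance, the paper's \emph{entire} proof: the paper argues exactly that no cutting plane is deleted, one is added, and the radius can only shrink, and concludes immediately — it never mentions the convexification parameter. So the portion of your proposal that is actually complete coincides with the published argument, and the obstruction you isolate in your second paragraph is not an artifact of your route but a genuine hole in the paper's own reasoning. Since $m^k_l(\cdot)=m(\cdot;x^k,a^k_l,I(k,l))$ and the algorithm updates $a$ and then ``updates the coefficient matrix'' at the end of \emph{every} iteration, ``keeping all cutting planes'' refers only to the index set: when $a^k_{l+1}\neq a^k_l$ each stored plane is tilted by $\frac{1}{2}(a^k_{l+1}-a^k_l)\left(\normt{w-x^k}^2-\normt{w-y_i}^2\right)$, exactly as your computation shows, so the pointwise inequality $m^k_{l+1}\geq m^k_l$ on which the one-line argument rests can fail (near $x^k$ when $a$ increases, far from $x^k$ when it decreases). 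That the planes really are re-evaluated at the current parameter is confirmed by the paper's own later use of $g(x^{kl};x^k,a^k_q)\leq m^k_q(x^{kl})$ in the proof of Lemma \ref{MinorTermiOr}, which only makes sense if the plane generated at $x^{kl}$ carries the later parameter $a^k_q$.

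However, your proposal does not repair this hole either: your last paragraph is a plan, not a proof. Two concrete problems. First, Lemma \ref{local:mod}(a) is available only when $a$ exceeds a restriction threshold ($a\geq a^{th}(x^k,I)$ via Lemma \ref{lm:34}); nothing guarantees this at an arbitrary minor iteration — indeed the paper's convergence theorems must \emph{assume} such conditions hold except finitely often — so the convex minorant $H$ you want to lean on may not exist when you need it. Second, comparing \eqref{modelreductionexpression} at the two parameter values involves two different sets of optimal multipliers $\lambda_i$, and the two effects you name — the increase $\frac{1}{2}(a^k_{l+1}-a^k_l)\normt{y_i-x^k}^2$ in each $E_i$ versus the uncontrolled change in $\normo{\sum_{i}\lambda_i[s_i+a(y_i-x^k)]}$ — have no evident reason to net out with a fixed sign; you concede as much. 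So, judged strictly as a proof, your submission is incomplete in precisely the case the paper silently skips. A genuine fix would have to either restate the lemma (and its uses in Lemmas \ref{MinorTermiOr} and \ref{dec:0}) for iterations on which $a$ is unchanged or stabilized, or supply a new argument bounding the effect of the $a$-update on the LP optimal value; neither the paper nor your sketch currently does this.
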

\begin{proof}
At the end of iteration $k,l$, we do not delete any cutting planes but add a new cutting plane to the model. Furthermore, the trust region radius $\Delta$ is possibly decreased. Hence the feasible region of linear subproblem \eqref{subppc} for iteration $k,l+1$ will become smaller. Therefore we have $m^k_{l+1}(x^{k,l+1})\geq m^k_l(x^{kl})$. 
\end{proof}
Define 
\begin{equation}\label{a}
L=\sup\{||s||_1:s\in\partial f(y),\ ||y-x||_{\infty}\leq\Delta_{\rm max},\ x\in  \text{lev}_{x^0 }f\}
\end{equation}
From the Lipschitz continuity and prox-regularity of $f$ we have $L<+\infty$.

The notion of \textit{minor iteration} is similar to the \textit{null step} in bundle methods. The following lemma shows that minor iterations either terminate finitely or generate an infinite sequence with very small model reduction. Furthermore, as we can see from below, the model reduction eventually decreases to 0 if minor iterations do not terminate finitely. If $f$ is convex we can easily see that this will show that the current iteration point is already a global minimizer of $f$. For the nonconvex case,  we will show that during the infinite minor iterations, if the convexification succeeds, i.e. the function $g$ is eventually convex locally around $x^k$, then $x^k$ is a stationary point of $f$.
\begin{lemma}\label{MinorTermiOr}
Suppose that $\epsilon_{\rm tol}=0$ and $a^k_l$ is bounded above by a constant $A$ for all $k$ and $l$. Let $l_1$ be any index such that $\rho^k_{l_1}<\eta_1$. Then there is an index $l_2>l_1$ and a real number $\bar{\eta}_2\in(\eta_1,1)$ such that either $\rho^k_{l_2}\geq\eta_1$ or 
\begin{equation}\label{mup2}
\frac{f(x^k)-m^k_{l_2}(x^{kl_2})}{f(x^k)-m^k_{l_1}(x^{kl_1})}\leq\bar{\eta}_2.
\end{equation}
\end{lemma}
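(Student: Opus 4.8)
The plan is to argue by contradiction, showing that so long as no serious step ever occurs the model reduction is forced down to zero, which then produces the required drop by a factor $\bar\eta_2$. Write $r_l\mathrel{\mathop:}=f(x^k)-m^k_l(x^{kl})=f(x^k)-z^{kl}$ for the model reduction at minor iteration $l$; by Lemma~\ref{reduction:lemma}(i) we have $m^k_l(x^k)=f(x^k)$, so $r_l\ge 0$, and since $\epsilon_{\rm tol}=0$ the stopping test we passed at $l_1$ guarantees $r_{l_1}>0$. If some index $l>l_1$ already has $\rho^k_l\ge\eta_1$, take $l_2$ to be the first such index and the first alternative of the conclusion holds; so assume henceforth that every iteration after $l_1$ is minor. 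Then Lemma~\ref{modnondecr} applies at each step and $\{r_l\}_{l\ge l_1}$ is non-increasing, hence $r_l\downarrow r_*\ge 0$. Fixing any $\bar\eta_2\in(\eta_1,1)$, it suffices to prove $r_*=0$: then some finite $l_2>l_1$ satisfies $r_{l_2}\le\bar\eta_2 r_{l_1}$, which is the second alternative.

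First I record two ingredients. From the definition of $\rho^k_l$, the inequality $\rho^k_l<\eta_1$ rearranges (using $r_l>0$) to $f(x^{kl})>f(x^k)-\eta_1 r_l$. Secondly, during minor iterations no cutting plane is ever deleted, only added, so the plane generated at the end of iteration $l$ --- at a point $\hat y_l$ (equal to $x^{kl}$, or to the backtracked point $\bar y$ of Lemma~\ref{backtrack:lemma}) lying in $\text{lev}_{x^0}f$ --- is present in every later model $m^k_{l'}$, $l'>l$. Because $\hat y_l\in\text{lev}_{x^0}f$, $\norm{\hat y_l-x^k}\le\Delta_{\max}$ and $a^k_{l'}\le A$, the slope $s_l+a^k_{l'}(\hat y_l-x^k)$ of this plane satisfies $\normo{s_l+a^k_{l'}(\hat y_l-x^k)}\le M\mathrel{\mathop:}=L+An\Delta_{\max}$ with $L$ from \eqref{a}, while its height at $\hat y_l$ equals $g(\hat y_l;x^k,a^k_{l'})\ge f(\hat y_l)$. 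Since $x^{kl'}$ minimizes $m^k_{l'}$ over the trust region, evaluating this plane at $x^{kl'}$ gives, for $l'>l$,
\[
z^{kl'}=m^k_{l'}(x^{kl'})\ \ge\ g(\hat y_l;x^k,a^k_{l'})+\la s_l+a^k_{l'}(\hat y_l-x^k),\,x^{kl'}-\hat y_l\ra .
\]

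Now I pass to the limit. The trial points $x^{kl}$ lie in the compact ball $B(x^k,\Delta_{\max})$ in $\norm{\cdot}$, the points $\hat y_l$ in the compact set $\text{lev}_{x^0}f$, the subgradients $s_l$ are bounded by $L$, and $a^k_l\in[0,A]$; so along a common subsequence $x^{kl_i}\to\hat x$, $\hat y_{l_i}\to\hat y$, $s_{l_i}\to\hat s$ and $a^k_{l_i}\to\hat a$. Fixing $i$ and letting $i'\to\infty$ in the displayed inequality (using continuity of $g$ and $z^{kl'}\to f(x^k)-r_*$), then letting $i\to\infty$, yields $f(x^k)-r_*\ge g(\hat y;x^k,\hat a)+\la \hat s+\hat a(\hat y-x^k),\hat x-\hat y\ra$. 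In the generic situation where no backtracking occurs infinitely often we may choose the subsequence with $\hat y=\hat x$, the inner product drops out, and the right-hand side reduces to $g(\hat x;x^k,\hat a)\ge f(\hat x)$. Combining with $f(\hat x)\ge f(x^k)-\eta_1 r_*$ (the limit of the first ingredient, by continuity of $f$) gives $f(x^k)-r_*\ge f(x^k)-\eta_1 r_*$, i.e. $(1-\eta_1)r_*\le 0$; since $\eta_1<1$ this forces $r_*=0$, contradicting $r_*>0$.

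The delicate point is precisely the coupling just glossed over: when backtracking is performed the cutting-plane point $\hat y_l$ differs from the trial point $x^{kl}$, so the limit inequality keeps the extra term $\la\hat s+\hat a(\hat y-x^k),\hat x-\hat y\ra$ and no longer collapses to a lower bound on $g(\hat x)$ (the lemma deliberately does not assume the restriction property). I expect this to be the main obstacle, and I would dispose of it by splitting into two cases. If infinitely many minor iterations avoid backtracking, I run the limiting argument along that subsequence, where $\hat y=\hat x$. Otherwise backtracking occurs at all large $l$, so $f(x^{kl})>f^k_u>f(x^k)$ by Lemma~\ref{backtrack:lemma}; this makes $\rho^k_l$ strongly negative, repeatedly triggers the contraction $\Delta\law\alpha_1\Delta$, drives $\Delta\to 0$ and hence $x^{kl}\to x^k$, so that $f(x^{kl})\to f(x^k)<f^k_u$ by continuity --- contradicting $f(x^{kl})>f^k_u$. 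The variation of $a^k_l$ between iterations is handled throughout by the uniform bound $a^k_l\le A$ and the convergence $a^k_{l_i}\to\hat a$ extracted above, so it adds no difficulty once the backtracking coupling is resolved.
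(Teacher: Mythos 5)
Your main case (no backtracking along an infinite subsequence of minor iterations) is correct and is, at heart, the paper's own argument in a different wrapper. The paper uses exactly your two ingredients --- the retained cutting plane generated at a trial point, giving $m^k_q(x^{kl})\geq g(x^{kl};x^k,a^k_q)\geq f(x^{kl})$, combined with the failed-descent inequality $f(x^{kl})>f(x^k)-\eta_1\left[f(x^k)-m^k_l(x^{kl})\right]$, and the slope bound $L+An\Delta_{\max}$ coming from \eqref{a} --- but instead of extracting convergent subsequences it deduces from the contradiction hypothesis that every pair of minor iterates satisfies $\norm{x^{kq}-x^{kl}}>c_2>0$ for a fixed $c_2$, which is impossible for infinitely many points in the compact ball $B(x^k,\Delta_{\max})$. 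Your sequential-compactness phrasing and the paper's uniform-separation phrasing are interchangeable; neither buys anything substantial over the other.

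The genuine gap is in your persistent-backtracking case. From $f(x^{kl})>f^k_u$ and Lemma~\ref{backtrack:lemma} you obtain only $\rho^k_l<-\bigl(f^k_u-f(x^k)\bigr)/r_l$, where $r_l\mathrel{\mathop:}=f(x^k)-m^k_l(x^{kl})$; this says $\rho^k_l$ is negative, not that $\rho^k_l<-1/\min\{1,\Delta\}$. The contraction on line \ref{decr:del} fires only when $f(x^{kl})-f(x^k)>r_l/\min\{1,\Delta^k_l\}\geq r_l$, and nothing prevents $r_l$ from being much larger than $f^k_u-f(x^k)$: indeed, under your standing hypothesis $r_l\downarrow r_*>0$, the limit $r_*$ may greatly exceed $f^k_u-f(x^k)$, in which case $\rho^k_l$ can remain, say, above $-1/2$ forever. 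Then $\Delta$ is never decreased, $x^{kl}$ need not converge to $x^k$, and the contradiction you invoke ($f(x^{kl})\to f(x^k)<f^k_u$) evaporates. To your credit, you have put your finger on a real subtlety: the paper's proof itself silently asserts that "$x^{kl}$ is the point where a new cutting plane ... is generated," which is false precisely when backtracking is performed, so the published argument also covers only your first case. But your repair does not close the hole; the scenario in which backtracking occurs at every large $l$ while the trust region stays bounded away from zero still requires an argument (either showing it cannot persist, or showing the cutting planes at the backtracked points $\bar{y}_l=x^k+\beta^{j_l}(x^{kl}-x^k)$, whose values are pinched just below $f^k_u>f(x^k)$, still force the model reduction down by a definite factor), and as written your proposal supplies none.
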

\begin{proof}
Suppose for contradiction that there does not exist such index $l_2$ and real number $\bar{\eta_2}$; that is, there is an infinite sequence of minor iterations and 
\begin{equation}\label{contra1}
\frac{f(x^k)-m^k_{q}(x^{kq})}{f(x^k)-m^k_{l_1}(x^{kl_1})}>\bar{\eta}_2, \quad\forall\ q>l_1,\ \forall\ \bar{\eta}_2\in(\eta_1,1).
\end{equation} 
Since $l_1$ can be any index such that $\rho^k_{l_1}<\eta_1$, 
and we do not delete cutting planes in minor iterations, 
we can assume that $q$ and $l$ are generic indices satisfying $q>l\geq l_1$. 

To construct a contradiction, write $f(x^k)-m^k_{q}(x)=[f(x^k)-m^k_{q}(x^{kl})]+[m^k_{q}(x^{kl})-m^k_{q}(x)]$. 
Consider the two parts of the right hand side of this equation. 
First, observing that $x^{kl}$ is the point where a 
new cutting plane of $g(y;x^k,a^k_l)$ is generated 
and all the cutting planes in minor iterations are 
kept in the model, it follows that 
$g(x^{kl};x^k,a^k_q)\leq m^k_q(x^{kl})$. Consequently, 
\vspace{-0.5cm}
\begin{align}
f(x^k)-m^k_{q}(x^{kl})&  \leq f(x^k)-g(x^{kl};x^k,a^k_q)=f(x^k)-\left[f(x^{kl})+\frac{a^k_q}{2}||x^{kl}-x^k||^2\right]\notag
\\
\quad\quad & \leq f(x^k)-f(x^{kl})< \eta_1\left[f(x^k)-m^k_{l}(x^{kl})\right]\quad(\text{becasue}\ \rho^k_l<\eta_1)
\notag
\\
\label{temp1}\quad\quad &\leq \eta_1\left[f(x^k)-m^k_{l_1}(x^{kl_1})\right]\quad(\text{by Lemma \ref{modnondecr}}).
\end{align}
Second, the model function $m^k_q(x)$ is convex. 
Therefore for all $\tilde{s}\in \partial m^k_q(x^{kl})$, 
\begin{equation}\label{subgra}
m^k_q(x)-m^k_q(x^{kl})\geq \tilde{s}^T(x-x^{kl})\quad\forall\ x.
\end{equation}
Note $\tilde{s}$ is a subgradient of $m^k_q$ at $x^{kl}$ where a cutting plane of $g(x^{kl};x^k,a^k_q)$ was generated, and thus $\tilde{s}\in\partial g(x^{kl};x^k,a^k_q)=\partial f(x^{kl})+a^k_q(x^{kl}-x^k) $.
It follows from \eqref{a}, \eqref{sub3pc} and the boundedness of $a^k_l$ that 
$||\tilde{s}||_1\leq L+||a^k_q(x^{kl}-x^k)||_1\leq L+An\Delta_{\max}$. Applying this to \eqref{subgra} we have 
\be
\label{temp2}
m^k_q(x^{kl})-m^k_q(x)\leq ||\tilde{s}||_1||x-x^{kl}||_{\infty}\leq (L+An\Delta_{\max})||x-x^{kl}||_{\infty}\quad\forall\ x.
\ee
Summing \eqref{temp1} and \eqref{temp2}, there is 
\be\label{t3}
f(x^k)-m^k_{q}(x)<\eta_1 [f(x^k)-m^k_{l_1}(x^{kl_1})]+(L+An\Delta_{\max})||x-x^{kl}||_{\infty}\quad\forall\ x.
\ee
It then follows from \eqref{contra1} and \eqref{t3} by taking $x$ as $x^{kq}$ that 
\[\bar{\eta}_2[ f(x^k)-m^k_{l_1}(x^{kl_1})]< \eta_1 [f(x^k)-m^k_{l_1}(x^{kl_1})]+(L+An\Delta_{\max})||x^{kq}-x^{kl}||_{\infty}\quad\forall\ q>l_1.\]
Thus 
\be
||x^{kq}-x^{kl}||_{\infty}>\frac{\bar{\eta}_2-\eta_1}{(L+An\Delta_{\max})}[f(x^k)-m^k_{l_1}(x^{kl_1})]\mathrel{\mathop:}=c_2>0.
\ee
However this cannot happen for an infinite number of indices $q$ and $l$ because all minor iteration points $x^{kl}$ such that $l\geq l_1$ are in the neighborhood of $x^k$, $B(x^k,\Delta_{\rm max})=\{x \; | \; ||x-x^k||_{\infty}\leq \Delta_{\max}\}$. Hence a contradiction is found and \eqref{mup2} must be true for some $l_2> l_1$ and $\bar{\eta}_2>\eta_1$.
\end{proof} 
It is worthy to mention that the proof of lemma \ref{modnondecr} and \ref{MinorTermiOr} does not require the convexity of $f$. So far we have shown that the minor iterations either terminate finitely or continue infinitely. To demonstrate our algorithm is well defined, we need to show in the latter case that the current major iteration point is a stationary point of $f$ and that $a^k_l$ is indeed bounded above. We show this in the next section.
\section{Convergence analysis}\label{conana}
\begin{theorem}\label{convergenceth}
Let Assumption 1 hold and $\epsilon_{\textrm tol}=0$. Suppose Algorithm \ref{LPBNC3} terminates at iteration $(k,l)$. If $a^k_l\geq a^{th}(x^k,I(k,l))$ then $x^k\in P_{{a^k_l}}(x^k)$ and $0\in\partial f(x^k)$. 
\end{theorem}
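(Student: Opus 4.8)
The plan is to show that termination forces the model reduction to vanish, and then to read off stationarity from the two identities that a vanishing reduction produces in Lemma \ref{reduction:lemma}. First I would unpack the stopping test. Since Algorithm \ref{LPBNC3} halts at iteration $(k,l)$ with $\epsilon_{\rm tol}=0$, the stopping condition gives $f(x^k)-z^{kl}\leq 0$. By Lemma \ref{reduction:lemma}(i) we have $m^k_l(x^k)=f(x^k)$ and $m^k_l(x^k)-m^k_l(x^{kl})=f(x^k)-z^{kl}$, so the left-hand side is exactly the model reduction. Because the value $a^k_l$ used in \eqref{subppc} always guarantees $E_i\geq 0$ for every $i\in I(k,l)$ (as recorded in the remark following Algorithm \ref{LPBNC3}), the reduction formula \eqref{modelreductionexpression} is a sum of nonnegative terms and hence nonnegative. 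The two inequalities together force $f(x^k)-z^{kl}=0$, i.e. the model reduction is exactly zero.

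Next I would extract the consequences of a zero reduction from \eqref{modelreductionexpression}. Writing $\bar{x}=x^k$, $a=a^k_l$ and $x^*=x^{kl}$, nonnegativity of each term ($E_i\geq 0$, $\lambda_i\geq 0$, $\Delta>0$) forces $\sum_{i\in\bar{I}}\lambda_iE_i=0$ in both branches, and in the branch $\norm{x^*-\bar{x}}=\Delta$ also $\normo{\sum_{i\in\bar{I}}\lambda_i[s_i+a(y_i-\bar{x})]}=0$; in the branch $\norm{x^*-\bar{x}}<\Delta$ this last identity is exactly the KKT relation \eqref{KKT3}. Hence in either case I obtain $\tilde{\epsilon}:=\sum_{i\in\bar{I}}\lambda_iE_i=0$ together with $\sum_{i\in\bar{I}}\lambda_i[s_i+a(y_i-\bar{x})]=0$.

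Then I would verify the convexity hypothesis of Lemma \ref{reduction:lemma}(ii) and apply it. By Lemma \ref{lm:34}, the assumption $a^k_l\geq a^{th}(x^k,I(k,l))$ supplies a compact set $D$ with nonempty interior, containing $F$, on which $g(\cdot;x^k,a^k_l)$ is the restriction of a globally convex minorant $H$. Taking $C=D$ (enlarged if necessary so that $x^k$ and the active points $y_i$, $i\in\bar{I}$, lie in its interior), Lemma \ref{reduction:lemma}(ii) gives $\sum_{i\in\bar{I}}\lambda_i[s_i+a(y_i-\bar{x})]\in\partial_{\tilde{\epsilon}}g(x^k;x^k,a^k_l)$, cf.\ \eqref{conclu:2}. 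Substituting $\tilde{\epsilon}=0$ and the vanishing aggregate subgradient yields $0\in\partial_0 g(x^k;x^k,a^k_l)$. The final clause of Lemma \ref{reduction:lemma}(ii) then identifies $x^k$ as a global minimizer of $g(\cdot;x^k,a^k_l)$, i.e. $x^k\in P_{a^k_l}(x^k)$, and also gives $0\in\partial g(x^k;x^k,a^k_l)=\partial f(x^k)$, as required.

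The only delicate point I anticipate is the set-theoretic bookkeeping in the third step: one must be sure that the set furnished by Lemma \ref{lm:34} places $x^k$ together with every active bundle point $\{y_i : i\in\bar{I}\}$ in its \emph{interior}, since the identification $\partial H=\partial g$ used inside Lemma \ref{reduction:lemma}(ii) is only valid on $\mathrm{int}\,C$. This is harmless: $x^k$ is always retained in the bundle and the active points belong to $F$, so $D$ may be enlarged to a compact set containing all of them in its interior without affecting the threshold argument. Everything else is a direct substitution into the already-established lemmas, so I expect no further obstruction.
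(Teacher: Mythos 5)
Your proof is correct, but it takes a genuinely different route from the paper's. The paper argues by contradiction: assuming $x^k\not\in P_{a^k_l}(x^k)$, it invokes Lemma \ref{lm:34} and then Lemma \ref{local:mod}(b) to sandwich $0=f(x^k)-z^{kl}\geq [f(x^k)-e_{a^k_l}(x^k)]\min\{\Delta^k_l/\norm{x^k-p},1\}\geq 0$ for $p\in P_{a^k_l}(x^k)$; since $f(x^k)-e_{a^k_l}(x^k)>0$ when $x^k$ is not a proximal fixed point, the minimum must vanish, forcing $\Delta^k_l/\norm{x^k-p}=0$, which is impossible because the trust-region radius is shrunk only finitely many times before termination. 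You instead work directly with the KKT decomposition of the vanishing model reduction: both branches of \eqref{modelreductionexpression} force $\sum_{i\in\bar{I}}\lambda_iE_i=0$ together with a vanishing aggregate subgradient (via \eqref{KKT3} in the interior case, via nonnegativity of both terms in the boundary case), whence Lemma \ref{reduction:lemma}(ii) yields $0\in\partial_0 g(x^k;x^k,a^k_l)$ and hence global minimality of $x^k$ for $g(\cdot;x^k,a^k_l)$. Your route is more constructive --- no contradiction, no appeal to an auxiliary proximal point $p$, no use of the positivity of $\Delta^k_l$ --- and it exposes exactly which quantities vanish at termination. What it costs is the interiority hypothesis of Lemma \ref{reduction:lemma}(ii): you need $x^k$ and the active $y_i$ in $\mathrm{int}\,C$, which Lemma \ref{lm:34} does not literally supply (it only asserts $D\supseteq F$), and your fix of enlarging $D$ is reasonable but slightly glib, since the threshold $a^{th}$ of Lemma \ref{lm:34} depends on the chosen set; strictly one must read $a^{th}(x^k,I(k,l))$ in the theorem as the threshold attached to the enlarged set. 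This is, however, within the paper's own level of precision --- its Lemma \ref{local:mod} implicitly needs the cutting planes of $g$ at bundle points to be cutting planes of $H$, which raises the same interiority issue --- so neither proof is more rigorous on this point. One advantage of the paper's version is that its contradiction pattern (envelope inequality plus trust-region radius) is exactly the machinery reused in Theorems \ref{infini:minor} and \ref{theo:final}, making the finite-termination case uniform with the rest of the convergence analysis.
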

\begin{proof}
As the algorithm terminates at $x^{kl}$, $x^{kl}$ must satisfy the stopping criterion $f(x^k)-z^{kl}\leq (1+|f(x^k)|)\epsilon_{\rm tol}$. 
As $\epsilon_{\rm tol}=0$, we have $f(x^k)-z^{kl}\leq0$.
However by the expression of model reduction in
 \eqref{modelreductionexpression}, 
 $f(x^k)-z^{kl}\geq0$. Thus $f(x^k)-z^{kl}=0$. Suppose for contradiction $x^k\not\in P_{{a^k_l}}(x^k)$. 
By \eqref{modelreductionexpression}, Lemma \ref{lm:34}, and Lemma \ref{local:mod}, if $a^k_l\geq a^{th}(x^k,I(k,l))$ and $p\in P_{{a^k_l}}(x^k)$, then we have 
\begin{equation}\label{mid:1}
0=f(x^k)-z^{kl}\geq \left[f(x^k)-e_{{a^k_l}}(x^k)\right]\min\left(\frac{\Delta^k_l}{\norm{x^k-p}},1\right)\geq 0.
\end{equation} 
From the definition of $P_{a^k_l}(x^k)\ni p$ we have 
\be\label{ggreterf}
f(x^k)-e_{{a^k_l}}(x^k)=g(x^k;x^k,a^k_l)-g\left(p;x^k,a^k_l\right)>0,\ \text{if }x^k\not
\in P_{a^k_l}(x^k).
\ee 
From \eqref{mid:1} and \eqref{ggreterf} we see $0=\min\left\{\frac{\Delta^k_l}{\norm{x^k-p}},1\right\}$. Thus
 $0=\frac{\Delta^k_l}{\norm{x^k-p}}$ with $\norm{x^k-p}\geq \Delta^k_l$. But $\Delta^k_l$ cannot be reduced to $0$ after finite iterations. Hence we have a contradiction.
\end{proof}
Let $\epsilon_{\rm tol}$ be 0. In the following analysis we assume the algorithm does not stop finitely. We see in \eqref{itersequence:1} that two sequences can be generated. Consistent notations should be used for $a$ and $a^{\min}$. For clear understanding, we unify those two cases with $\{a^n\}$ and $\{a^{\min}_n\}$ when it's not necessary to distinguish them.
We start our convergence analysis by showing that the convexification parameter $a$ in algorithm LPBNC is bounded.
\begin{lemma}\label{lem:aBound}
$0\leq a^n\leq\gamma a^{th},\ \forall\ n\in \mathbb{N}$.
\end{lemma}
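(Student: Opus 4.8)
The plan is to argue by induction on the iteration counter $n$, tracking how the two update rules for $a$ in Algorithm \ref{LPBNC3} (lines \ref{updateA3} and \ref{updateA4}) move the value of $a$ relative to the global threshold $a^{th}$ furnished by Theorem \ref{conv:th}. The single fact I would isolate first is the comparison $a^{\min}_n \leq a^{th}$ for every $n$: since the dynamically maintained bundle always consists of points in $\text{lev}_{x^0}f$ (Remark \ref{f:dec} together with the level-set bookkeeping described in Section \ref{sec:updM}), the local lower bound $a^{\min}_n$ defined by \eqref{a:min} is, as already noted in the text following \eqref{a:min}, never larger than the threshold $a^{th}$ realizing the restriction property on the whole level set. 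This inequality is precisely what couples the algorithm's adaptively chosen $a^{\min}_n$ to the fixed quantity $a^{th}$ in the statement.

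For the base case, the initialization sets $a^0 = 0$, so $0 \leq a^0 \leq \gamma a^{th}$ holds trivially (using $a^{th}\geq 0$). For the inductive step I would assume $0 \leq a^n \leq \gamma a^{th}$ and examine the three possibilities for the transition $a^n \mapsto a^{n+1}$. If neither branch of lines \ref{updateA3}--\ref{updateA4} is taken, then $a^{n+1} = a^n$ and there is nothing to prove. If line \ref{updateA3} fires, then $a^n < a^{\min}_n \leq a^{th}$, and $a^{n+1} = \max\{a^{\min}_n, \gamma a^n\}$; in the first sub-case $a^{n+1} = a^{\min}_n \leq a^{th} \leq \gamma a^{th}$ (using $\gamma \geq 2$), while in the second $a^{n+1} = \gamma a^n < \gamma a^{th}$ because $a^n < a^{th}$. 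If line \ref{updateA4} fires, then $\sigma\geq 1$ gives $a^n \geq \sigma a^{\min}_n \geq a^{\min}_n$, so the averaging step yields $a^{n+1} = (a^n + a^{\min}_n)/2 \leq a^n \leq \gamma a^{th}$; here $a$ only decreases. Non-negativity $a^{n+1} \geq 0$ is immediate in every case, since each update is a maximum or an average of non-negative quantities.

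The only genuine content, and the step I expect to require the most care, is justifying $a^{\min}_n \leq a^{th}$ uniformly in $n$, that is, confirming that every bundle point ever used lies in $\text{lev}_{x^0}f$ so that the local convexity threshold cannot exceed the global one. Once that is secured, the two monotonicity observations---line \ref{updateA3} can overshoot $a^{th}$ by at most the factor $\gamma$, while line \ref{updateA4} never increases $a$---close the induction at once.
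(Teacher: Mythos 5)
Your proposal is correct and follows essentially the same route as the paper's proof: both hinge on the previously noted fact that $a^{\min}_n\leq a^{th}$ (from the discussion following \eqref{a:min}) and on the observation that line \ref{updateA3} is the only increase step, overshooting by at most the factor $\gamma$, so that $\max\{a^{\min}_n,\gamma a^n\}\leq\gamma a^{\min}_n\leq\gamma a^{th}$ when $a^n<a^{\min}_n$. Your explicit induction with the no-update and averaging cases spelled out is just a more detailed write-up of the same argument the paper gives tersely.
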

\begin{proof}
The update of $a^n$ happens only in line \ref{updateA3} or line \ref{updateA4} of Algorithm \ref{LPBNC3}. In either case we have $a^n\geq a^{\min}_n\geq 0$ from the definition of $a^{\min}_n$. We increase $a$ in line \ref{updateA3} and decrease $a$ in line \ref{updateA4}. To show $a^n\leq\gamma a^{th}$ we only need to show $\max\{a^{\min}_n,\gamma a^n\}\leq \gamma a^{th}$ if $a^n<a^{\min}_n$. As $\gamma\geq 2$ and $a^{\min}_n\leq a^{th}$ for all $n\in \mathbb{N}$, we clearly have $\max\{a^{\min}_n,\gamma a^n\}\leq \gamma a^{\min}_n\leq\gamma a^{th}$.
\end{proof}
A consequence of lemma \ref{MinorTermiOr} is that if the minor iteration sequence does not terminate finitely then the model reduction will become smaller and smaller and eventually converge to $0$. We will show that if there is an infinite sequence of serious steps, the model reduction will converge to $0$ too. Denote the index of the last minor iteration as $l_k$ so that $x^{k+1}=x^{kl_k}$.
\begin{lemma}\label{dec:0}
The model reduction of LPBNC converges to 0. Specifically, \\
(i) if in iteration $k$ there is an infinite sequence of minor iterations then 
\be\label{modred:dec0}
\lim\limits_{l\rightarrow\infty}[m^k_l(x^k)-m^k_l(x^{kl})]=0;
\ee
(ii) if the sequence of major iteration points $\{x^k\}$ is infinite then 
\be\label{modred:serdec}
\lim\limits_{k\rightarrow\infty}[m^k_{l_k}(x^k)-m^k_{l_k}(x^{k+1})]=0.
\ee
\end{lemma}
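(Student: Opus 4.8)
The plan is to prove each part by combining the model reduction bound from Lemma \ref{local:mod}(b) with a telescoping/monotonicity argument on the objective values $f(x^k)$, using the boundedness of $a^n$ established in Lemma \ref{lem:aBound}. For part (i), I would invoke Lemma \ref{MinorTermiOr} directly: with $a^k_l$ bounded above by $\gamma a^{th}$, repeated application of \eqref{mup2} shows that the model reduction $f(x^k)-m^k_l(x^{kl})$ is driven below $\bar\eta_2$ times its earlier value infinitely often. Since the quantity is nonnegative (by Remark, using $E_i\geq 0$) and decreases by a fixed fraction along a subsequence, and since Lemma \ref{modnondecr} controls the behaviour between such steps, I would argue the whole sequence $f(x^k)-m^k_l(x^{kl})$ converges to $0$. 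Because $m^k_l(x^k)=f(x^k)$ by Lemma \ref{reduction:lemma}(i), this is exactly \eqref{modred:dec0}.

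For part (ii), the key observation is that a serious step with $\rho^k_{l_k}\geq\eta_1$ gives
\begin{equation}\label{eq:serious-decrease}
f(x^k)-f(x^{k+1})\geq \eta_1\left[f(x^k)-m^k_{l_k}(x^{k+1})\right]\geq 0.
\end{equation}
Summing \eqref{eq:serious-decrease} over all $k$ telescopes the left-hand side. Since $\{f(x^k)\}$ is nonincreasing (Remark \ref{f:dec}(\ref{rem2})) and $f$ is bounded below by Assumption \ref{assump}, the total decrease $\sum_k [f(x^k)-f(x^{k+1})]$ is finite. Hence the summands on the right of \eqref{eq:serious-decrease} form a convergent series, forcing $f(x^k)-m^k_{l_k}(x^{k+1})\to 0$, which is \eqref{modred:serdec} after again using $m^k_{l_k}(x^k)=f(x^k)$.

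I expect the main obstacle to be part (i): turning the ``fractional decrease infinitely often'' conclusion of Lemma \ref{MinorTermiOr} into genuine convergence to $0$ of the full sequence. The subtlety is that Lemma \ref{modnondecr} only asserts $m^k_{l+1}(x^{k,l+1})\geq m^k_l(x^{kl})$, i.e. the model \emph{value at the minimizer} is nondecreasing, so the model reduction $f(x^k)-m^k_l(x^{kl})$ is actually \emph{nonincreasing} in $l$. Combined with the existence, for each starting index $l_1$, of a later index $l_2$ where \eqref{mup2} contracts the reduction by a factor $\bar\eta_2<1$, this monotone-plus-contraction structure yields a subsequence tending to $0$; monotonicity then upgrades this to convergence of the entire sequence. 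The care required is to ensure that no serious step interrupts the argument in case (i) — but by hypothesis case (i) assumes the minor iterations never terminate, so every $\rho^k_l<\eta_1$ and Lemma \ref{MinorTermiOr} applies at every index, making the contraction mechanism available repeatedly.
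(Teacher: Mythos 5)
Your proposal is correct and follows essentially the same route as the paper: part (i) combines the monotonicity of the model reduction (Lemma \ref{modnondecr}) with repeated application of the contraction \eqref{mup2} from Lemma \ref{MinorTermiOr} (available at every index since in case (i) no serious step ever occurs, so $\rho^k_l<\eta_1$ always, with the required bound on $a^k_l$ supplied by Lemma \ref{lem:aBound}), and part (ii) rests on the serious-step inequality together with the monotone, bounded-below sequence $\{f(x^k)\}$. Your telescoping-series formulation of part (ii) is only a cosmetic variant of the paper's direct squeeze argument $0\leq \eta_1\left[m^k_{l_k}(x^k)-m^k_{l_k}(x^{k+1})\right]\leq f(x^k)-f(x^{k+1})\rightarrow 0$.
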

\begin{proof}
(i) From Lemma \ref{reduction:lemma} we know $m^k_l(x^k)=f(x^k)$ for all $(k,l)$ and the sequence $\{f(x^k)-m^k_l(x^{kl})\}_{l=1}^{\infty}$ is nonnegative. 
From Lemma \ref{modnondecr} we see this sequence is also monotonic. 
Since the sequence is infinite, by Lemma \ref{MinorTermiOr} there is an infinite sequence of indices $0<l_1<l_2<\cdots$ such that $0\leq f(x^k)-m^k_{l_j}(x^{kl_j})\leq \bar{\eta}_2[f(x^k)-m^k_{l_{j-1}}(x^{kl_{j-1}})]\leq\cdots\leq \bar{\eta}_2^{j-1}[f(x^k)-m^k_{l_1}(x^{kl_{1}})]$ where $j$ can be infinitely large. 
Consequently, \eqref{modred:dec0} holds true.\\
(ii) From Remark \ref{f:dec}\eqref{rem2} we see the sequence $\{f(x^k)\}_{k=0}^{\infty}$ is monotonic. 
Under Assumption \ref{assump}, $f$ is bounded below. Hence $\lim\limits_{k\rightarrow\infty}[f(x^k)-f(x^{k+1})]=0$.\\ 
From the definition of $\rho^k_{l_k}$ and Lemma \ref{reduction:lemma}, we have $f(x^k)-f(x^{kl_k})\geq \eta_1 [f(x^k)-z^{kl_k}]=\left( m^k_{l_k}(x^k)-m^k_{l_k}(x^{kl_k})\right)$. Since both $\{f(x^k)-f(x^{k+1})\}$ and $\{m^k_{l_k}(x^k)-m^k_{l_k}(x^{k+1})\}$ are nonnegative we must have $\lim\limits_{k\rightarrow\infty}[m^k_{l_k}(x^k)-m^k_{l_k}(x^{kl_k})]=0$.
\end{proof}
Let $\bar{L}$ be the Lipschitz constant of $f$. 
We are now ready to prove the convergence theorem of LPBNC under Assumption 1.
\begin{theorem}\label{infini:minor}
Let Assumption 1 hold and $\epsilon_{\textrm tol}=0$. Suppose Algorithm \ref{LPBNC3} generates an infinite number of minor iterations after the $\bar{k}$-th major iteration. 
For every infinite subsequence $\mathcal{K}\subseteq\mathbb{N}$, if $B(\mathcal{K})\mathrel{\mathop:}=\{l\in\mathcal{K} \; | \;a^{\bar{k}}_l<a^{th}(x^{\bar{k}},I(\bar{k},l))\}$ is a finite set, then\\
(i) there exists $\{p_l\}_{l\in\mathcal{K}}$ such that $p_l\in P_{{a^{\bar{k}}_l}}(x^{\bar{k}})$ and $\norm{p_l-x^{\bar{k}}}\overset{\mathcal{K}}{\rightarrow}0$; 
\\
(ii) $0\in\partial f(x^{\bar{k}})$.
\end{theorem}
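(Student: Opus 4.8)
The plan is to deduce (ii) from (i) through the outer semicontinuity of the proximal mapping, and to obtain (i) by combining the vanishing of the model reduction with the lower bound on that reduction furnished by Lemma \ref{local:mod}. First I would exploit the hypothesis that $B(\mathcal{K})$ is finite: for all but finitely many $l\in\mathcal{K}$ we have $a^{\bar{k}}_l\geq a^{th}(x^{\bar{k}},I(\bar{k},l))$, so Lemma \ref{lm:34} guarantees that $g(\cdot;x^{\bar{k}},a^{\bar{k}}_l)$ is the restriction to a compact set $D\supseteq F$ of a globally convex minorant, and therefore the hypotheses of Lemma \ref{local:mod} are met with $\bar{x}=x^{\bar{k}}$, $a=a^{\bar{k}}_l$ and $\Delta=\Delta^{\bar{k}}_l$. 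Since after the $\bar{k}$-th major iteration there is an infinite loop of minor iterations, Lemma \ref{dec:0}(i) shows that the model reduction $R_l\mathrel{\mathop:}=m^{\bar{k}}_l(x^{\bar{k}})-m^{\bar{k}}_l(x^{\bar{k}l})$ tends to $0$.

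For (i) I would, for each such $l$, pick $p_l\in P_{a^{\bar{k}}_l}(x^{\bar{k}})$, choosing $p_l=x^{\bar{k}}$ whenever $x^{\bar{k}}\in P_{a^{\bar{k}}_l}(x^{\bar{k}})$. For the remaining indices, where $x^{\bar{k}}\notin P_{a^{\bar{k}}_l}(x^{\bar{k}})$, Lemma \ref{local:mod}(b) gives
\[
0\leq\left[f(x^{\bar{k}})-e_{a^{\bar{k}}_l}(x^{\bar{k}})\right]\min\left\{\frac{\Delta^{\bar{k}}_l}{\norm{x^{\bar{k}}-p_l}},\,1\right\}\leq R_l\longrightarrow 0 .
\]
I would then show $\norm{p_l-x^{\bar{k}}}\to 0$ by contradiction. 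Assuming a subsequence with $\norm{p_l-x^{\bar{k}}}\geq\delta>0$, Lemma \ref{lem:aBound} lets me pass to a further subsequence with $a^{\bar{k}}_l\to\bar{a}$; the continuity of $(x,a)\mapsto e_{a}(x)$ then yields $f(x^{\bar{k}})-e_{a^{\bar{k}}_l}(x^{\bar{k}})\to\bar{D}\mathrel{\mathop:}=f(x^{\bar{k}})-e_{\bar{a}}(x^{\bar{k}})\geq 0$. Writing $e_{a^{\bar{k}}_l}(x^{\bar{k}})=f(p_l)+\tfrac{a^{\bar{k}}_l}{2}\normt{p_l-x^{\bar{k}}}^2$ and using the Lipschitz estimate $f(x^{\bar{k}})-f(p_l)\leq\bar{L}\normt{p_l-x^{\bar{k}}}$ keeps $\{p_l\}$ bounded, while the convexity of $g(\cdot;x^{\bar{k}},\bar{a})$ on $D$ (from $a^{\bar{k}}_l\geq a^{th}$) is what I would use to force $\bar{D}>0$ from the persistent displacement $\norm{p_l-x^{\bar{k}}}\geq\delta$. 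A positive lower bound on the $\min$-term then makes the middle expression bounded away from $0$, contradicting $R_l\to0$; hence $\norm{p_l-x^{\bar{k}}}\to0$.

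With (i) in hand, (ii) follows quickly: along a subsequence on which $a^{\bar{k}}_l\to\bar{a}$ I have the constant sequence $x^{\bar{k}}\to x^{\bar{k}}$, the bounded parameters $a^{\bar{k}}_l\to\bar{a}$, and $p_l\in P_{a^{\bar{k}}_l}(x^{\bar{k}})$ with $p_l\to x^{\bar{k}}$. The outer semicontinuity of $(x,a)\mapsto P_{a}(x)$ noted in Section 2 then gives $x^{\bar{k}}\in P_{\bar{a}}(x^{\bar{k}})$, and by the relations recorded after Definition \ref{proximal} this means $0=\bar{a}(x^{\bar{k}}-x^{\bar{k}})\in\partial f(x^{\bar{k}})$, which is exactly (ii).

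I expect the contradiction step inside (i) to be the main obstacle. The reduction bound degrades as $\norm{x^{\bar{k}}-p_l}$ grows, since the $\min$-term saturates at $\Delta^{\bar{k}}_l/\norm{x^{\bar{k}}-p_l}$, so a vanishing reduction does not by itself force the displacement to vanish. Making the contradiction work requires controlling three quantities at once: the trust-region radius $\Delta^{\bar{k}}_l$ (so that the $\min$-term does not collapse to $0$), the Moreau-envelope gap $\bar{D}$ (via continuity of $e_{a}$ and the local convexity coming from $a^{\bar{k}}_l\geq a^{th}$), and the size of the prox displacement (via the global Lipschitz constant $\bar{L}$ and the boundedness of $a^{\bar{k}}_l$). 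It is precisely here that the hypothesis that $B(\mathcal{K})$ is finite, i.e. the eventual success of the convexification, is indispensable, and establishing $\bar{D}>0$ while keeping $\Delta^{\bar{k}}_l$ bounded below is the delicate part.
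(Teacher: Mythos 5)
Your overall architecture matches the paper's: finiteness of $B(\mathcal{K})$ plus Lemma \ref{lm:34} to activate Lemma \ref{local:mod}, Lemma \ref{dec:0}(i) for the vanishing model reduction, a contradiction argument for (i), and outer semicontinuity for (ii) (the paper phrases (ii) via $a^{\bar{k}}_l(x^{\bar{k}}-p_l)\in\partial f(p_l)$ and outer semicontinuity of $\partial f$, but your route through $x^{\bar{k}}\in P_{\bar{a}}(x^{\bar{k}})$ is equivalent). The genuine gap is in the final step of (i). After establishing, as the paper also does, that along the contradiction subsequence the envelope gap $f(x^{\bar{k}})-e_{a^{\bar{k}}_l}(x^{\bar{k}})$ cannot tend to zero, you want to conclude by bounding $\min\{\Delta^{\bar{k}}_l/\norm{x^{\bar{k}}-p_l},1\}$ away from zero, and you yourself note this requires keeping $\Delta^{\bar{k}}_l$ bounded below. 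But that cannot be arranged: nothing in the algorithm prevents the radius from shrinking indefinitely, and in fact under your contradiction hypothesis one is forced to the opposite conclusion --- the sandwich $0\leq[f(x^{\bar{k}})-e_{a^{\bar{k}}_l}(x^{\bar{k}})]\min\{\Delta^{\bar{k}}_l/\norm{x^{\bar{k}}-p_l},1\}\leq R_l\rightarrow 0$, the nonvanishing of the gap, and the boundedness of $\norm{x^{\bar{k}}-p_l}$ (prox points stay in $\text{lev}_{x^0}f$) together give $\Delta^{\bar{k}}_l\rightarrow 0$. Your plan therefore stalls exactly at the point you flag as ``delicate,'' and no choice of subsequence repairs it.

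The missing idea, which is the heart of the paper's proof, is to exploit the trust-region update rule rather than to resist it. Since $\Delta$ is decreased only at line \ref{decr:del} of Algorithm \ref{LPBNC3}, i.e.\ when $\rho^{\bar{k}}_l<-1/\min\{1,\Delta^{\bar{k}}_l\}$, the convergence $\Delta^{\bar{k}}_l\rightarrow 0$ forces this condition to hold infinitely often; on those iterations the definition of $\rho$, the Lipschitz continuity of $f$, and the feasibility bound $\norm{x^{\bar{k}l}-x^{\bar{k}}}\leq\Delta^{\bar{k}}_l$ yield the quadratic estimate $m^{\bar{k}}_l(x^{\bar{k}})-m^{\bar{k}}_l(x^{\bar{k}l})\leq\bar{L}(\Delta^{\bar{k}}_l)^2$. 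Feeding this back into the Lemma \ref{local:mod} inequality --- where eventually the minimum equals $\Delta^{\bar{k}}_l/\norm{x^{\bar{k}}-p_l}$ because $\Delta^{\bar{k}}_l<\epsilon\leq\norm{x^{\bar{k}}-p_l}$ --- gives $f(x^{\bar{k}})-e_{a^{\bar{k}}_l}(x^{\bar{k}})\leq\bar{L}\norm{x^{\bar{k}}-p_l}\Delta^{\bar{k}}_l\rightarrow 0$ along that subsequence, contradicting the fact that the envelope gap cannot vanish on any subsequence with persistent displacement. This inversion (first derive $\Delta^{\bar{k}}_l\rightarrow 0$, then use the shrink criterion to kill the envelope gap) is what closes the argument; without it your proof of (i) is incomplete. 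A secondary point: your suggestion that ``convexity of $g$'' forces $\bar{D}>0$ is not the operative mechanism either --- what the paper uses there is the outer semicontinuity of $a\mapsto P_{a}(x^{\bar{k}})$, which converts $\bar{D}=0$ (i.e.\ $x^{\bar{k}}\in P_{a^*}(x^{\bar{k}})$) into the existence of prox points arbitrarily close to $x^{\bar{k}}$, contradicting the assumed persistent displacement.
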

\begin{proof}
(i) Let $\mathcal{K}\subseteq \mathbb{N}$ be an infinite subsequence and $B(\mathcal{K})$ be a finite set, then there exists $N_1\in \mathcal{K}$ such that $a^{\bar k}_l\geq a^{th}(x^{\bar{k}},I(\bar{k},l))$ for all $l\geq N_1$ and $l\in\mathcal{K}$. 
By Lemma \ref{lm:34}, $a^{\bar{k}}_l$ satisfies the required conditions in Lemma \ref{local:mod} for all $l\geq N_1$ and $l\in\mathcal{K}$. 
Suppose for contradiction that for all sequences $\{p_l\}_{l\in \mathcal{K}}$ such that $p_l\in P_{{a^{\bar{k}}_l}}(x^{\bar{k}})$, there exist $\epsilon>0$ and $N_2\in\mathcal{K}$ such that $\norm{p_l-x^{\bar{k}}}\geq \epsilon$ for all $l\geq N_2$ and $l\in\mathcal{K}$. 
Then conclusion \eqref{mod:rd1} can be applied with $(\bar{x},\ x^*,\ a,\ \Delta,\ p,\ I)$ replaced by $(x^{\bar{k}},\ x^{\bar{k},l},\ a^{\bar{k}}_l,\ \Delta^{\bar{k}}_l,\ p_l,\ I(\bar{k},l))$ for all $l\geq N_3\mathrel{\mathop:}=\max\{N_1,N_2\}$ and $l\in\mathcal{K}$. 
For simplicity of notation we drop the superscript $\bar{k}$ and set $(x^l,\ a_l,\ \Delta_l,\ \rho_l)\mathrel{\mathop:}=(x^{\bar{k},l},\ a^{\bar{k}}_l,\ \Delta^{\bar{k}}_l,\ \rho^{\bar{k}}_l)$. We have 
\begin{equation}\label{aol3}
m(x^{\bar{k}})-m(x^l)\geq[f(x^{\bar{k}})-e_{{a_l}}(x^{\bar{k}})]
\min\{\frac{\Delta_l}{\norm{x^{\bar{k}}-p_l}},1\}\geq 0,\ \forall\ l\in\mathcal{K}_{\geq N_3}.
\end{equation}
From Lemma \ref{dec:0}(i) we have $m(x^{\bar{k}})-m(x^l)\rightarrow 0$ as $l\rightarrow \infty$. Consequently, \begin{equation}\label{contra:maj}
[f(x^{\bar{k}})-e_{{a_l}}(x^{\bar{k}})]
\min\{\frac{\Delta_l}{\norm{x^{\bar{k}}-p_l}},1\}\overset{\mathcal{K}}{\rightarrow}0.
\end{equation}
We first show that
\begin{equation}\label{fm:ea}
[f(x^{\bar{k}})-e_{{a_l}}(x^{\bar{k}})]\overset{\mathcal{K}}{\nrightarrow}0.
\end{equation}
Suppose for contradiction that $[f(x^{\bar{k}})-e_{{a_l}}(x^{\bar{k}})]\overset{\mathcal{K}}{\rightarrow}0.$ 
By Lemma \ref{lem:aBound}, $\{a_l\}_{l\in\mathcal{K}}$ is bounded, and hence there exist $a^*$ and $\bar{\mathcal{K}}\subseteq\mathcal{K}$ such that $a_l\overset{\bar{\mathcal{K}}}{\rightarrow}a^*$. 
As the mapping $e_{(\cdot)}(x^{\bar{k}})$ is continuous, we have $f(x^{\bar{k}})=e_{a^*}(x^{\bar{k}})$ and equivalently $x^{\bar{k}}\in P_{a^*}(x^{\bar{k}})$. 
From the outer semicontinuity of the mapping $P_{(\cdot)}(x^{\bar{k}})$, there exist $\epsilon'<\epsilon$ and $N_4\in\bar{\mathcal{K}}$ such that $\min\limits_{p\in P_{a_l}(x^{\bar{k}})}\norm{x^{\bar{k}}-p}\leq\epsilon'$, for all $l\in\bar{\mathcal{K}}_{\geq N_4}$. 
However, this cannot be true because $\epsilon'<\epsilon$, $\bar{\mathcal{K}}\subseteq\mathcal{K}$ and we have supposed that $\norm{p_l-x^{\bar{k}}}\geq \epsilon$ for all $N_2\leq l\in\mathcal{K}$, where $p_l$ can be an arbitrary element of $P_{a_l}(x^{\bar{k}})$.   
We have finished showing \eqref{fm:ea} which together with \eqref{contra:maj} yields 
\begin{equation}\label{donxmp}
\frac{\Delta_l}{\norm{x^{\bar{k}}-p_l}}\overset{\mathcal{K}}{\rightarrow}0.
\end{equation}
Next we show 
\begin{equation}\label{nxmpb}
\{\norm{x^{\bar{k}}-p_l}\}_{l\in\mathcal{K}}\text{ is bounded}.
\end{equation}
By Definition \ref{proximal}, 
$f(p_l)\leq e_{{a_l}}(x^{\bar{k}})\leq f(x^{\bar{k}})$, and therefore $p_l\in\text{lev}_{x^{\bar{k}}}f\subseteq\text{lev}_{x^0}f$.
We also have $x^{\bar{k}}\in\text{lev}_{x^0}f$ which is bounded. 
Hence $\{\norm{x^{\bar{k}}-p_l}\}_{l\in\mathcal{K}}$ is bounded above. We have supposed that $\norm{p_l-x^{\bar{k}}}\geq \epsilon$ for all $N_2\leq l\in\mathcal{K}$, hence $\{\norm{x^{\bar{k}}-p_l}\}_{l\in\mathcal{K}}$ is bounded below and \eqref{nxmpb} is true. From \eqref{donxmp} and \eqref{nxmpb} we have \begin{equation}\label{Dgt0}
\Delta_l\overset{\mathcal{K}}{\rightarrow}0.
\end{equation}
Then
 line \ref{decr:del} of Algorithm \ref{LPBNC3} must be executed infinite times, which implies that there exists an infinite subsequence $\mathcal{K^*}\subseteq \mathbb{N}$ such that $\rho_l<-\frac{1}{\min\{\Delta_l,1\}}$ for all $l\in\mathcal{K^*}$ and $\mathcal{K'}\mathrel{\mathop:}=\mathcal{K}\cap\mathcal{K^*}$ is infinite. 
 By the definition of $\rho_l$, the Lipschitz continuity of $f$, and the feasibility of $x^l$ to problem \eqref{subppc} we have \begin{equation}\label{deltaM} m(x^{\bar{k}})-m(x^l)<[f(x^{\bar{k}})-f(x^l)]\min\{\Delta_l,1\}\leq \bar{L}\norm{x^{\bar{k}}-x^l}\min\{\Delta_l,1\}\leq\bar{L}{\Delta_l}^2,\ \forall\ l\in\mathcal{K'}.
\end{equation}
By \eqref{Dgt0} there exists $N_5\in\mathcal{K}_{\geq N_2}$ such that $\Delta_l<\epsilon\leq \norm{p_l-x^{\bar{k}}}$ for all $l\in\mathcal{K}_{\geq N_5}$. 
This implies $\min\{\frac{\Delta_l}{\norm{x^{\bar{k}}-p_l}},1\}=\frac{\Delta_l}{\norm{x^{\bar{k}}-p_l}}$ and from \eqref{aol3} we have 
\begin{equation}\label{lcil2}
f(x^{\bar{k}})-e_{{a_l}}(x^{\bar{k}})\leq \frac{[m(x^{\bar{k}})-m(x^l)]\norm{x^{\bar{k}}-p_l}}{\Delta_l}\leq \bar{L}\norm{x^{\bar{k}}-p_l}\Delta_l,\ \forall\ l\in(\mathcal{K}_{\geq N_6}\cap\mathcal{K'}),
\end{equation}
where the last inequality follows from \eqref{deltaM} and $N_6\mathrel{\mathop:}=\max\{N_3,N_5\}$. From \eqref{nxmpb}, \eqref{Dgt0}, \eqref{lcil2} and the fact that $\mathcal{K'}\subseteq\mathcal{K}$ we have 
\begin{equation}\label{lcot42}
[f(x^{\bar{k}})-e_{{a_l}}(x^{\bar{k}})]\overset{\mathcal{K'}}{\rightarrow}0.
\end{equation}
In \eqref{fm:ea} and its poof the $\mathcal{K}$ can be replaced by any infinite subsequence $\tilde{\mathcal{K}}$ such that $\norm{p_l-x^{\bar{k}}}\geq \epsilon$ for all $l\in\tilde{\mathcal{K}}_{\geq N_2}$ with $p_l\in P_{a_l}(x^{\bar{k}})$. Consequently, \eqref{lcot42} cannot be true and we have found a contradiction.  
%
%
%
%
%
%
%
%
%
%
%
%
%
%
%

(ii) Finally, to see $0\in\partial f(x^{\bar{k}})$, note 
${a}_l(x^{\bar{k}}-p_l)\in\partial f(p_l)$ for all $p_l\in P_{{a_l}}(x^{\bar{k}})$ and $l\in\mathcal{K}$. By the outer semicontinuity of the proximal mapping and subdifferential, when $\norm{p_l-x^{\bar{k}}}\overset{\mathcal{K}}{\rightarrow}0$, as long as $\{a_l\}_{l\in\mathcal{K}}$ is bounded which is true from Lemma \ref{lem:aBound}, we have $0\in\partial f(x^{\bar{k}})$.
\end{proof}
We denote the minor iterations between the $k$-th major iteration and the $(k+1)$-th major iteration $M(k)\mathrel{\mathop:}=\{0,1,\ \cdots,\ l_k\}$ with $l_k=0$ if there is no minor iteration in between. We will need the following assumption.
\begin{asump}
If there exists a sequence of indices $\{j_k\}_{k\in \mathcal{K}}$ with $j_k\in M(k)$ such that $G\mathrel{\mathop:}=\{k\in\mathbb{N}|\rho^k_{j_k}<-\frac{1}{\min\{\Delta^k_{j_k},1\}}\}$ is an infinite set then $\{k\in G|a^k_{j_k}<a^{th}(x^k,I(k,j_k))\}$ is a finite set.
\end{asump}
\begin{theorem}\label{theo:final}
Let Assumption 1 and Assumption 2 hold and $\epsilon_{\textrm tol}=0$. 
Suppose Algorithm \ref{LPBNC3} generates an infinite number of major iterations. 
For every subsequence $\mathcal{K}\subseteq\mathbb{N}$ and $\hat{x}$ such that $x^k\overset{\mathcal{K}}{\rightarrow}\hat{x}$, there exists an associated sequence of indices $\{i_k\}_{k\in \mathcal{K}}$ with $i_k\in M(k)$, such that if $E(\mathcal{K})\mathrel{\mathop:}=\{k\in\mathcal{K}|a^k_{i_k}<a^{th}(x^k,I(k,i_k))\}$ is a finite set, then\\
(i) there exists $\{p^k\}_{k\in \mathcal{K}}$ with $p^k\in P_{{a^k_{i_k}}}(x^k)$ such that $\norm{x^k-p^k}\overset{\mathcal{K}}{\rightarrow}0$;
\\ (ii) $0\in\partial f(\hat{x})$. 
\end{theorem}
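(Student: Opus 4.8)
The plan is to mirror closely the argument of Theorem \ref{infini:minor}, taking for each major iteration $k$ the natural index $i_k=l_k$, so that $x^{k+1}=x^{kl_k}$ is the optimal solution of \eqref{subppc} with model $m^k_{l_k}$, radius $\Delta^k_{l_k}$ and parameter $a^k_{l_k}$. With this choice Lemma \ref{dec:0}(ii) supplies the crucial fact that the serious-step model reductions vanish,
\[
m^k_{l_k}(x^k)-m^k_{l_k}(x^{k+1})\overset{\mathcal{K}}{\rightarrow}0.
\]
Since $E(\mathcal{K})$ is finite, for $k\in\mathcal{K}$ large we have $a^k_{i_k}\geq a^{th}(x^k,I(k,i_k))$, so by Lemma \ref{lm:34} the restriction property holds and Lemma \ref{local:mod}(b) applies with $(\bar{x},x^*,a,\Delta,p,I)$ replaced by $(x^k,x^{k+1},a^k_{i_k},\Delta^k_{i_k},p^k,I(k,i_k))$.

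For part (i) I would argue by contradiction: suppose no admissible $p^k\in P_{a^k_{i_k}}(x^k)$ drives $\norm{x^k-p^k}$ to zero, so that $\min_{p\in P_{a^k_{i_k}}(x^k)}\norm{x^k-p}\geq\epsilon$ for some $\epsilon>0$ along an infinite subsequence $\mathcal{K}'\subseteq\mathcal{K}$. Feeding the vanishing model reduction into \eqref{mod:rd1} gives $[f(x^k)-e_{a^k_{i_k}}(x^k)]\min\{\Delta^k_{i_k}/\norm{x^k-p^k},1\}\overset{\mathcal{K}'}{\rightarrow}0$. I would then show the envelope gap $f(x^k)-e_{a^k_{i_k}}(x^k)$ cannot tend to zero on any infinite subsequence of $\mathcal{K}'$: if it did, boundedness of $a^k_{i_k}$ (Lemma \ref{lem:aBound}) lets us extract $a^k_{i_k}\rightarrow a^*$, and---this is the new feature compared with Theorem \ref{infini:minor}, where the prox-center was frozen---the joint continuity of $(x,a)\mapsto e_a(x)$ together with $x^k\overset{\mathcal{K}}{\rightarrow}\hat{x}$ yields $f(\hat{x})=e_{a^*}(\hat{x})$, i.e. $\hat{x}\in P_{a^*}(\hat{x})$, whereupon outer semicontinuity of $(x,a)\mapsto P_a(x)$ at $(\hat{x},a^*)$ contradicts the $\epsilon$-separation. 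Hence the envelope gap stays bounded away from $0$ on $\mathcal{K}'$, forcing the min-factor to vanish; and since $\{\norm{x^k-p^k}\}$ is bounded (because $f(p^k)\leq e_{a^k_{i_k}}(x^k)\leq f(x^k)\leq f(x^0)$ places both $p^k$ and $x^k$ in the bounded set $\text{lev}_{x^0}f$), I conclude $\Delta^k_{i_k}\overset{\mathcal{K}'}{\rightarrow}0$.

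Shrinking radii can only be produced by the minor-iteration reduction step (line \ref{decr:del}), so $\Delta^k_{i_k}\rightarrow0$ forces $\rho^k_{j_k}<-1/\min\{\Delta^k_{j_k},1\}$ to hold infinitely often for some index sequence $\{j_k\}$, $j_k\in M(k)$. This is exactly the hypothesis triggering Assumption 2, which guarantees $a^k_{j_k}\geq a^{th}(x^k,I(k,j_k))$ for all but finitely many such $k$. On this subsequence I would reproduce the final estimate of Theorem \ref{infini:minor}: bounding the model reduction above by $\bar{L}(\Delta^k_{j_k})^2$ via the reduction-step inequality and Lipschitz continuity, and combining with \eqref{mod:rd1}, to force $f(x^k)-e_{a^k}(x^k)\rightarrow0$ along a common infinite subsequence, contradicting the lower bound on the envelope gap established above. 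This contradiction proves (i). For (ii): once $\norm{x^k-p^k}\overset{\mathcal{K}}{\rightarrow}0$, the defining property of the proximal mapping (Definition \ref{proximal}), $a^k_{i_k}(p^k-x^k)\in\partial f(p^k)$, together with $p^k\rightarrow\hat{x}$, the boundedness of $a^k_{i_k}$, and the outer semicontinuity of $\partial f$, yields $0\in\partial f(\hat{x})$.

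The main obstacle is the bookkeeping that links the two different index sequences: the serious-step indices $i_k=l_k$ at which the envelope gap is evaluated, and the reduction-step indices $j_k$ at which Assumption 2 is invoked. One must ensure the radius and parameter information carried across minor iterations makes the contradiction between $\Delta^k_{i_k}\rightarrow0$ and the lower bound on the envelope gap line up on a single common infinite subsequence. A secondary technical point, absent from Theorem \ref{infini:minor}, is that the prox-center $x^k$ now varies, so the continuity of the Moreau envelope and the outer semicontinuity of the proximal mapping must be used in their joint $(x,a)$ form rather than in the parameter $a$ alone.
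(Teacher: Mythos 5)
Your skeleton matches the paper's own proof almost step for step: take $i_k=l_k$, invoke Lemma \ref{dec:0}(ii), use finiteness of $E(\mathcal{K})$ with Lemmas \ref{lm:34} and \ref{local:mod} to get \eqref{mod:rd1}, run the envelope-gap argument in the joint variable $(x,a)$ (this is exactly the paper's step \eqref{1fm:ea}, and you correctly identify it as the feature distinguishing this theorem from Theorem \ref{infini:minor}), deduce $\Delta^k_{l_k}\overset{\mathcal{K}}{\rightarrow}0$, pass to reduction-step indices $j_k$ via line \ref{decr:del} and Assumption 2, and finish with the $\bar{L}(\Delta^k_{j_k})^2$ estimate and outer semicontinuity for part (ii). However, the two things you label ``the main obstacle'' and leave as bookkeeping are precisely where the substance lies, and as written your argument does not close.

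First, your final estimate needs $\Delta^k_{j_k}\to 0$ along the common subsequence, and this does not follow from $\Delta^k_{l_k}\to 0$: within a major iteration the radius is never increased, so $j_k\leq l_k$ gives $\Delta^k_{j_k}\geq\Delta^k_{l_k}$ --- the inequality points the wrong way. The paper closes this with a separate argument (its \eqref{delta:bo1}--\eqref{Dkjkgt0}) using the trust-region update rules: at a serious step the radius grows by at most the factor $\alpha_2$, hence $\Delta^{k+1}_{j_{k+1}}\leq\Delta^{k+1}_0\leq\alpha_2\Delta^k_{l_k}$, which chains the vanishing of the end-of-iteration radii to that of the reduction-step radii; you never supply this step. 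Second, your concluding contradiction does not actually contradict anything: the lower bound you established is on $f(x^k)-e_{a^k_{l_k}}(x^k)$, while the quantity you force to vanish is $f(x^k)-e_{a^k_{j_k}}(x^k)$, with the different parameter $a^k_{j_k}$; the two statements are compatible. The paper avoids this mismatch by stating the contradiction hypothesis universally over \emph{all} index sequences $\{i_k\}$ with $i_k\in M(k)$ and all selections $p^k\in P_{a^k_{i_k}}(x^k)$, so that the non-vanishing argument \eqref{1fm:ea} can be re-run verbatim at the $j_k$'s; it also inserts the observation that if $\norm{x^k-p^k_{j_k}}\overset{\mathcal{K}'}{\rightarrow}0$ then the sequence $\{j_k\}$ itself already witnesses conclusion (i), which is what licenses assuming the $j_k$-distances are bounded away from zero. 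With your hypothesis stated only for $i_k=l_k$, the last step fails. (A cosmetic point: prox optimality gives $a^k_{i_k}(x^k-p^k)\in\partial f(p^k)$, not $a^k_{i_k}(p^k-x^k)$; this sign does not affect the limit in part (ii).)
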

\begin{proof}
(i) Let $\mathcal{K}\subseteq \mathbb{N}$ be an infinite subsequence such that $x^k\overset{\mathcal{K}}{\rightarrow}\hat{x}$. 
Suppose for contradiction that for all sequences $\{p^k\}_{k\in \mathcal{K}}$ and $\{i_k\}_{k\in \mathcal{K}}$ with $i_k\in M(k)$, $p^k\in P_{{a^k_{i_k}}}(x^k)$ and $E(\mathcal{K})$ finite, there exist $\epsilon>0$ and $M_1\in\mathcal{K}$ such that $\norm{x^k-p^k}\geq \epsilon$ for all $k\in\mathcal{K}_{\geq M_1}$. We first show that  
\begin{equation}\label{1fm:ea}
[f(x^{\bar{k}})-e_{{a^k_{i_k}}}(x^{\bar{k}})]\overset{\mathcal{K}}{\nrightarrow}0.
\end{equation}
Suppose for contradiction that $[f(x^{\bar{k}})-e_{{a^k_{i_k}}}(x^{\bar{k}})]\overset{\mathcal{K}}{\rightarrow}0.$ 
By Lemma \ref{lem:aBound}, $\{a^k_{i_k}\}_{k\in\mathcal{K}}$ is bounded, and hence there exist $a^*$ and $\bar{\mathcal{K}}\subseteq\mathcal{K}$ such that $a^k_{i_k}\overset{\bar{\mathcal{K}}}{\rightarrow}a^*$. 
As both $f(\cdot)$ and $e_{(\cdot)}(\cdot)$ are continuous, we have $f(\hat{x})=e_{a^*}(\hat{x})$ and equivalently $\hat{x}\in P_{a^*}(\hat{x})$. 
From the outer semicontinuity of the mapping $P_{(\cdot)}(\cdot)$, there exist $\epsilon'<\epsilon$ and $M_2\in\bar{\mathcal{K}}$ such that $\min\limits_{p\in P_{a^k_{i_k}}(x^{{k}})}\norm{x^{{k}}-p}\leq\epsilon'$, for all $k\in\bar{\mathcal{K}}_{\geq M_2}$. 
However, this cannot be true because $\epsilon'<\epsilon$, $\bar{\mathcal{K}}\subseteq\mathcal{K}$ and we have supposed that $\norm{x^k-p^k}\geq \epsilon$ for all $k\in\mathcal{K}_{\geq M_1}$, where $p^k$ can be an arbitrary element of $P_{a^k_{i_k}}(x^k)$.
We have finished showing \eqref{1fm:ea}.

Take $i_k\equiv l_k$ for all $k\in\mathcal{K}$. Since $E(\mathcal{K})$ is a finite set, there exists $M_3\in \mathcal{K}$ such that $a^{k}_{l_k}\geq a^{th}(x^{{k}},I({k},l_k))$ for all $ k\in\mathcal{K}_{\geq M_3}$. 
By Lemma \ref{lm:34}, $a^{{k}}_{l_k}$ satisfies the required conditions in Lemma \ref{local:mod} for all $ k\in\mathcal{K}_{\geq M_3}$. 
Then conclusion \eqref{mod:rd1} can be applied with $(\bar{x},\ x^*,\ a,\ \Delta,\ p,\ I)$ replaced by $(x^{{k}},\ x^{{k},l_k},\ a^{{k}}_{l_k},\ \Delta^{{k}}_{l_k},\ p^k,\ I({k},l_k))$ for all $k\in \mathcal{K}_{\geq M_4}$ where $M_4\mathrel{\mathop:}=\max\{M_1,M_3\}$, i.e. 
\[
m(x^{{k}})-m(x^{k}_{l_k})\geq[f(x^{{k}})-e_{{a^k_{l_k}}}(x^{{k}})]
\min\{\frac{\Delta^k_{l_k}}{\norm{x^{{k}}-p^k}},1\}\geq 0,\ \forall\ k\in \mathcal{K}_{\geq M_4}.\]
From Lemma \ref{dec:0}(ii) we have $m(x^{{k}})-m(x^{k+1})\rightarrow 0$ as $k\rightarrow \infty$. Consequently, \begin{equation}\label{contra:maj2}
[f(x^k)-e_{{a^k_{l_k}}}(x^k)]
\min\{\frac{\Delta^k_{l_k}}{\norm{x^k-p^k}},1\}\overset{\mathcal{K}}{\rightarrow}0.
\end{equation}
From \eqref{1fm:ea} and \eqref{contra:maj2} we have 
\begin{equation}\label{1donxmp}
\frac{\Delta^k_{l_k}}{\norm{x^k-p^k}}\overset{\mathcal{K}}{\rightarrow}0.
\end{equation}
Next we show 
\begin{equation}\label{1nxmpb}
\{\norm{x^k-p^k}\}_{k\in\mathcal{K}}\text{ is bounded}.
\end{equation}
By Definition \ref{proximal}, 
$f(p^k)\leq e_{{a^k_{l_k}}}(x^k)\leq f(x^k)$, and therefore $p^k\in\text{lev}_{x^k}f\subseteq\text{lev}_{x^0}f$.
We also have $x^k\in\text{lev}_{x^0}f$ which is bounded. 
Hence $\{\norm{x^k-p^k}\}_{k\in\mathcal{K}}$ is bounded above.  
We have supposed that $\norm{x^k-p^k}\geq \epsilon$ for all $k\in\mathcal{K}_{\geq M_1}$, hence $\{\norm{x^k-p^k}\}_{k\in\mathcal{K}}$ is bounded below and \eqref{1nxmpb} is true. From \eqref{1donxmp} and \eqref{1nxmpb} we have \begin{equation}\label{dklkd0}
\Delta^k_{l_k}\overset{\mathcal{K}}{\rightarrow}0.
\end{equation}
Then
 line \ref{decr:del} of Algorithm \ref{LPBNC3} must be executed infinite times, which implies that there exists a subsequence $\mathcal{K^*}\subseteq \mathbb{N}$ such that $\mathcal{K'}\mathrel{\mathop:}=\mathcal{K}\cap\mathcal{K^*}$ is infinite and $\rho^k_{j_k}<-\frac{1}{\min\{\Delta^k_{j_k},1\}}$ for all $k\in\mathcal{K^*}$, with $j_k\in M(k)$.  
 By the definition of $\rho^k_{j_k}$, the Lipschitz continuity of $f$, and the feasibility of $x^{k,j_k}$ to problem \eqref{subppc} we have \begin{align}\label{deltaM2} m^k_{j_k}(x^k)-m^k_{j_k}(x^{k,j_k})&<[f(x^k)-f(x^{k,j_k})]\min\{\Delta^k_{j_k},1\}\notag\\
 &\leq \bar{L}\norm{x^k-x^{k,j_k}}\min\{\Delta^k_{j_k},1\}\notag\\
 &\leq\bar{L}{\Delta^k_{j_k}}^2,\ \forall\ k\in\mathcal{K'},
\end{align}
where $m^k_{j_k}(\cdot)\mathrel{\mathop:}=m(\cdot;x^k,a^k_{j_k},I(k,j_k))$.

We now show $\Delta^k_{j_k}\overset{\mathcal{K'}}{\rightarrow}0$. Suppose for contradiction that $\{\Delta^k_{j_k}\}_{k\in\mathcal{K'}}$ is bounded away from 0. 
From \eqref{dklkd0} and the fact that $\mathcal{K'}\subseteq\mathcal{K}$, there exists $M_5\in\mathcal{K'}$ such that 
\begin{equation}\label{delta:bo1}
\Delta^k_{l_k}<\Delta^{k+1}_{j_{k+1}} \text{ for all }k\in\mathcal{K'}_{\geq M_5}.
\end{equation}
From the update of trust region in Algorithm \ref{LPBNC3} we see that in minor iterations trust region radius is not increased and in major iterations trust region radius is increased under some conditions. Thus \eqref{delta:bo1} implies that 
\begin{equation}\label{delta:bo2}
\Delta^{k+1}_{j_{k+1}}\leq \Delta^{k+1}_0=\min\{\alpha_2\Delta^k_{l_k} ,\Delta_{\max}\}\leq \alpha_2\Delta^k_{l_k}\text{ for all }k\in\mathcal{K'}_{\geq M_5}.
\end{equation} 
From \eqref{dklkd0}, $\mathcal{K'}\subseteq\mathcal{K}$, \eqref{delta:bo1} and \eqref{delta:bo2}, we have
\begin{equation}\label{Dkjkgt0}
\Delta^k_{j_k}\overset{\mathcal{K'}}{\rightarrow}0.
\end{equation}
We have finished showing $\Delta^k_{j_k}\overset{\mathcal{K'}}{\rightarrow}0$ by reductio ad absurdum. 
Now for each $j_k$ there exists $p^k_{j_k}\in P_{a^k_{j_k}}(x^k)$. 
If $\norm{p^k_{j_k}-x^k}\overset{\mathcal{K'}}{\rightarrow} 0$ then we have found a sequence satisfying conclusion (i). 
Thus we suppose for contradiction that 
$\{\norm{p^k_{j_k}-x^k}\}_{k\in\mathcal{K'}}$ is bounded away from 0. 
Consequently, there exists $M_6\in\mathcal{K'}$ such that $\Delta^k_{j_k}<\epsilon\leq \norm{p^k_{j_k}-x^k}$ for all $k\in\mathcal{K'}_{\geq M_6}$. 
This implies $\min\{\frac{\Delta^k_{j_k}}{\norm{x^k-p^k_{j_k}}},1\}=\frac{\Delta^k_{j_k}}{\norm{x^k-p^k_{j_k}}}$ for all $k\in\mathcal{K'}_{\geq M_6}$. 
Under Assumption 2, $\{k\in G|a^k_{j_k}<a^{th}(x^k,I(k,j_k))\}$ is a finite set, and therefore there exists $M_7\in\mathcal{K'}$ such that Lemma \ref{local:mod} can be applied with $(\bar{x},\ x^*,\ a,\ \Delta,\ p,\ I)$ replaced by $(x^{{k}},\ x^{{k},j_k},\ a^{{k}}_{j_k},\ \Delta^{{k}}_{j_k},\ p^k_{j_k},\ I({k},j_k))$ for all $k\in \mathcal{K'}_{\geq M_7}$, i.e. 
\begin{equation}\label{deltaM3}
f(x^k)-e_{{a^k_{j_k}}}(x^k)\leq \frac{[m_{j_k}(x^k)-m_{j_k}(x^{k,j_k})]\norm{x^k-p^k_{j_k}}}{\Delta^k_{j_k}},\ \forall\ k\in\mathcal{K'}_{\geq M_8},
\end{equation}
where $M_8\mathrel{\mathop:}=\max\{M_6,M_7\}$. 
From \eqref{deltaM2} and \eqref{deltaM3} we have 
\begin{equation}\label{deltaM1}
f(x^k)-e_{{a^k_{j_k}}}(x^k)\leq \bar{L}\norm{x^k-p^k_{j_k}}\Delta^k_{j_k},\ \forall\ k\in\mathcal{K'}_{\geq M_8}.
\end{equation}
The $p^k$ in \eqref{1nxmpb} and its proof can be replaced by $p^k_{j_k}$ and thus $\{\norm{x^k-p^k_{j_k}}\}_{k\in\mathcal{K'}}$ is bounded. This together with \eqref{Dkjkgt0} and \eqref{deltaM1} yields $f(x^k)-e_{{a^k_{j_k}}}(x^k)\overset{\mathcal{K'}}{\rightarrow}0$. We can easily check that the $\mathcal{K}$ in \eqref{1fm:ea} and its proof can be replaced by $\mathcal{K'}$ with $i_k$ replaced by $j_k$ and $p^k$ replaced by $p^k_{j_k}$. Hence we have $f(x^k)-e_{{a^k_{j_k}}}(x^k)\overset{\mathcal{K'}}{\nrightarrow}0$. We have found a contradiction and conclusion (i) holds true. 

(ii) To see $0\in\partial f(\hat{x})$, note 
${a}^k_{i_k}(x^k-p^k)\in\partial f(p^k)$ for all $p^k\in P_{{a^k_{i_k}}}(x^k)$ and $k\in\mathcal{K}$. By the outer semicontinuity of the proximal mapping and subdifferential, when $x^k\overset{\mathcal{K}}{\rightarrow}\hat{x}$ and $\norm{p^k-x^k}\overset{\mathcal{K}}{\rightarrow}0$, as long as $\{a^k_{i_k}\}_{k\in\mathcal{K}}$ is bounded which is true from Lemma \ref{lem:aBound}, we have $0\in\partial f(\hat{x})$. 
\end{proof}
\section{Numerical experiments}
In this section we report some preliminary numerical results on implementations of LPBC and LPBNC. Here our goal is to provide a proof of principle only. For nonconvex examples we demonstrate that the conditions stated in the convergence theorems can be satisfied. The two algorithms were programmed in MATLAB R2012b in a computer with 3.40 GHz CPU and 8 GB RAM. We used the CPLEX connector (V12.5.1) for MATLAB to solve the linear subproblems. Specifically, problem \eqref{subpz2} was programmed and solved by the CPLEX Class API and problem \eqref{subppc} was solved by the toolbox function \textsf{cplexlp}. CPLEX automatically chooses from primal simplex, dual simplex and barrier optimizers to solve a given linear programming problem. In our implementations we found that no instances of the linear subproblems were solved by barrier optimizer. 
The implementation of our algorithms requires high accuracy of the solution of linear subproblems. Hence the CPLEX tolerances of optimality and feasibility are crucial to the performance of LPBC and LPBNC. As we see from the algorithms, both the stopping criterion and definition of $\rho$ are dependent on the model reduction, $f(\bar{x})-z^*$; if $z^*$ provided by CPLEX solver is slightly bigger than the actual optimal value of $\eqref{subppc}$, then the model reduction can be significantly inaccurate. In fact, we observed instances of negative model reduction when we use the default tolerances of optimality and feasibility in CPLEX. 
To prevent the occurrence of such cases, we set both the optimality and feasibility tolerances to $10^{-9}$, the least value in CPLEX.

In our experiments, the choice of the initial trust region radius $\Delta^0_0$ can significantly change the performance of our algorithm on some problems. 
In most trust region methods, choosing the initial trust region radius is an import issue, as stated in the monograph \cite[page 784]{R.2000} ``one very often has to resort to some heuristic to choose $\Delta_0$ on the basis of other initial information." Nonetheless, \cite{R.2000} suggested several strategies of initializing trust region and we adopted the following two choices, $\Delta^0_0=1$ and $\Delta^0_0=\frac{1}{10}\normt{s^0}$, where $s^0$ is an element of $\partial f(x^0)$. Settings for other trust region related parameters in both LPBC and LPBNC are $\eta_1=10^{-4},\ \eta_3=0.4,\ \ \alpha_1=0.25,\ \alpha_2=2$, and $\Delta_{\max}=1000$.
\subsection{Convex Examples}
Table \ref{tab:rfcp} presents the results for the tested convex problems listed in Table \ref{con:prob}, where   
 problems 1 to 15 are taken from section 3 of \cite{Luksan2000} and problems 16 to 20 are the problems 2.1 to 2.5 of \cite{Karmitsa2007}. For all convex and nonconvex problems in our tests we use the initial points provided in the associated references. 
\begin{table}[htbp]
  \centering
  \caption{Tested convex problems}
    \begin{tabular}{cccc}
    \toprule
    No.   & Problem & Dimension & Optimal Value \\
    \midrule
    1     & CB2   & 2     & 1.9522245 \\
    2     & CB3   & 2     & 2 \\
    3     & DEM   & 2     & -3 \\
    4     & QL    & 2     & 7.2 \\
    5     & LQ    & 2     & -1.4142136 \\
    6     & Mifflin1 & 2     & -1 \\
    7     & Wolfe & 2     & -8 \\
    8     & Rosen & 4     & -44 \\
    9     & Shor  & 5     & 22.600162 \\
    10    & Maxquad & 10    & -0.8414083 \\
    11    & Maxq  & 20    & 0 \\
    12    & Maxl  & 20    & 0 \\
    13    & Goffin & 50    & 0 \\
    14    & MXHILB & 50    & 0 \\
    15    & L1HILB & 50    & 0 \\
    16    & Generalization of MAXQ & 100   & 0 \\
    17    & Generalization of MXHILB & 100   & 0 \\
    18    & Chained LQ & 100   & -$99\sqrt{2}$ \\
    19    & Chained CB3 I & 100   & 198 \\
    20    & Chained CB3 II & 100   & 198 \\
    \bottomrule
    \end{tabular}%
  \label{con:prob}%
\end{table}%
The following abbreviations are used in Table \ref{tab:rfcp}.\\
\begin{tabular}{ll}
$f_{val}$ & minimal function value returned by the algorithm,\\
nf & number of function evaluations used by the algorithm,\\
$k$ & number of major iterations,\\
L & number of minor iterations,\\
time & elapsed CPU time,\\
t-CPX & sum of CPU time by CPLEX solver,\\
$\Delta$ & final value of trust region radius,\\
sh & number of times that trust region radius is decreased,\\
pr & number of times that primal simplex method is chosen by CPLEX,\\
dual & number of times that dual simplex method is chosen by CPLEX. 

\end{tabular}
\begin{table}[htbp]
  \centering
  \caption{Results for convex problems}
    \begin{tabular}{ccccccccccc}
    \toprule
    No.   & $f_{val}$ & nf & $k$   & L     & time  & t-CPX & $\Delta$ & sh & pr & dual \\
    \midrule
    1     & 1.952225451 & 16    & 10    & 5     & 0.1428 & 0     & 0.341842126 & 0     & 0     & 16 \\
    2     & 2     & 3     & 1     & 1     & 0.0977 & 0     & 0.5   & 1     & 0     & 3 \\
    3     & -3    & 8     & 6     & 1     & 0.1097 & 0     & 2.039607805 & 0     & 2     & 6 \\
    4     & 7.20000069 & 16    & 10    & 5     & 0.1322 & 0     & 0.141421356 & 2     & 0     & 16 \\
    5     & -1.41421274 & 18    & 14    & 3     & 0.1305 & 0     & 1.13137085 & 0     & 3     & 15 \\
    6     & -0.999999683 & 28    & 12    & 15    & 0.1562 & 0     & 0.03125 & 6     & 5     & 23 \\
    7     & -8    & 5     & 3     & 1     & 0.1021 & 0     & 2     & 1     & 0     & 5 \\
    8     & -43.99998585 & 54    & 24    & 29    & 0.2361 & 0     & 0.25  & 4     & 1     & 53 \\
    9     & 22.60018019 & 55    & 22    & 32    & 0.2356 & 0     & 0.25  & 1     & 0     & 55 \\
    10    & -0.841407474 & 220   & 34    & 185   & 0.7292 & 0     & 0.25  & 1     & 0     & 220 \\
    11    & 4.06847E-07 & 249   & 116   & 132   & 0.8042 & 0     & 2     & 3     & 0     & 249 \\
    12    & 0     & 36    & 16    & 19    & 0.1735 & 0     & 3.2   & 2     & 7     & 29 \\
    13    & 0     & 51    & 48    & 2     & 0.2316 & 0.016 & 4     & 0     & 1     & 50 \\
    14    & 2.35525E-07 & 15    & 10    & 4     & 0.1290 & 0     & 8.158764414 & 2     & 3     & 12 \\
    15    & 2.08721E-06 & 27    & 15    & 11    & 0.2028 & 0     & 0.13964447 & 6     & 1     & 26 \\
    16    & 4.21692E-07 & 1361  & 583   & 777   & 4.5585 & 0.079 & 40    & 2     & 1     & 1360 \\
    17    & 9.97664E-07 & 25    & 15    & 9     & 0.1571 & 0     & 2.045863824 & 5     & 3     & 22 \\
    18    & -140.0070287 & 1185  & 91    & 1093  & 8.4852 & 3.661 & 0.124058958 & 3     & 0     & 1185 \\
    19    & 198.000171 & 1437  & 132   & 1304  & 10.4948 & 5.109 & 0.13978045 & 5     & 2     & 1435 \\
    20    & 198.0000905 & 35612 & 199   & 35412 & 285.4227 & 150.166 & 0.13978045 & 5     & 2     & 35610 \\
    \bottomrule
    \end{tabular}%
  \label{tab:rfcp}%
\end{table}%

In LPBC, we set $\epsilon_{\mathrm{tol}}=10^{-6}$ and $T=30$. For problems 1 - 14 we initialize trust region radius by 1; for problems 15 - 20 we initialize trust region radius by $\frac{1}{10}\normt{s^0}$. 
From Table \ref{tab:rfcp} we see that LPBC returned optimal values to accuracy $10^{-6}$ for 14 problems, $10^{-4} $ for 4 problems and $2\times10^{-4}$ for 2 problems. For all problems except the last three, CPLEX consumed negligible time to solve all the linear subproblems. The possibility that trust region is shrunk (the value sh/L) is very small for the majority of the tested problems. We also see that most of the linear subproblems were solved by dual simplex method.   

\subsection{Nonconvex Examples} 
The tested nonconvex problems are listed in Table \ref{tl:1} where problems 1 to 7 are taken from section 3 of \cite{Luksan2000} and problems 8 to 12 are the problems 2.6 to 2.10 in \cite{Karmitsa2007}. 

\begin{table}[htbp]
  \centering
  \caption{Tested nonconvex problems
   }
    \begin{tabular}{cccc}
    \toprule
    No.   & Problem & Dimension & Optimal Value \\
    \midrule
    1     & Crescent & 2     & 0 \\
    2     & Mifflin2 & 2     & -1 \\
    3     & Colville 1 & 5     & -32.348679 \\
    4     & HS78  & 5     & -2.9197004 \\
    5     & El-Attar & 6     & 0.5598131 \\
    6     & Gill  & 10    & 9.7857721 \\
    7     & Steiner 2 & 12    & 16.703838 \\
    8     & Active Faces & 50    & 0 \\
    9     & Brown 2 & 50    & 0 \\
    10    & Chained Mifflin2 & 50    & -34.795 \\
    11    & Chained Crescent I & 50    & 0 \\
    12    & Chained Crescent II & 50    & 0 \\
\bottomrule
    \end{tabular}%
  \label{tl:1}
\end{table}
In LPBNC, we set $\gamma=2$ and $\epsilon_{\mathrm{tol}}=10^{-5}$. Apart from the initial trust region radius $\Delta_0^0$, another parameter that can cause  dramatic changes in the performance of LPBNC is the backtrack parameter $\beta$. 
Results with two settings of these parameters are listed in Table \ref{tab:4} and \ref{tab:5} below. We use the same abbreviations as in Table \ref{tab:rfcp}. 
Additionally, we list the difference of $f_{val}$ and optimal value (error), the number of function evaluations in the backtrack process divided by the total number of function evaluations (pb), the number of subgradient evaluations (se), the final value for $a$, the final value for $a^{\min}$ and the number of times that $a$ is updated (au). Subgradient evaluations do not happen in the backtrack process and hence the total number of subgradient evaluations is not equal to that of function evaluations. 
We found that all instances of LP subproblems were solved by dual simplex method in our tests for nonconvex problems.
\begin{table}[htbp]
  \centering
  \caption{Results for nonconvex problems with $\Delta^0_0=1,\ \beta=0.7$}
    \begin{tabular}{rrrrrrrrrrrrrr}
    \toprule
    No.   & error & nf & pb    & se    & $k$   & L     & time  & t-CPX & $\Delta$ & sh & a     & $a^{\min}$ & au \\
    \midrule
    1     & 0.000257 & 46    & 30.43 & 32    & 19    & 12    & 1.186 & 0     & 0.0625 & 3     & 0.90002 & 0.89339 & 3 \\
    2     & 0     & 20    & 10.00 & 18    & 4     & 13    & 0.260 & 0     & 0.03125 & 4     & 0     & 0     & 0 \\
    3     & 6.28E-07 & 46    & 19.57 & 37    & 14    & 22    & 0.561 & 0     & 0.125 & 3     & 0     & 0     & 0 \\
    4     & 6.5E-05 & 81    & 2.47  & 79    & 30    & 48    & 0.940 & 0     & 0.125 & 4     & 50.37528 & 48.16938 & 5 \\
    5     & 9.25E-07 & 99    & 9.09  & 90    & 35    & 54    & 1.214 & 0.031 & 0.125 & 8     & 0.40795 & 0.39540 & 9 \\
    6     & 0.0002196 & 205   & 1.95  & 201   & 30    & 170   & 4.602 & 0.076 & 0.015625 & 4     & 6.769637 & 6.769637 & 5 \\
    7     & 0.0001409 & 123   & 16.26 & 103   & 36    & 66    & 1.116 & 0.03  & 0.03125 & 3     & 0     & 0     & 0 \\
    8     & 0     & 2     & 0.00  & 2     & 1     & 0     & 0.278 & 0     & 2     & 0     & 0.118057 & 0.118057 & 1 \\
    9     & 0     & 2     & 0.00  & 2     & 1     & 0     & 0.121 & 0     & 2     & 0     & 0     & 0     & 0 \\
    10    & 4.05E-05 & 11549 & 69.81 & 3487  & 85    & 3401  & 345.646 & 16.931 & 0.003906 & 5     & 0     & 0     & 0 \\
    11    & 7.29E-06 & 5743  & 38.22 & 3548  & 189   & 3358  & 126.861 & 9.896 & 0.003906 & 5     & 0     & 0     & 0 \\
    12    & 1.01E-05 & 1637  & 54.31 & 748   & 72    & 675   & 15.801 & 1.199 & 0.000977 & 9     & 0     & 0     & 0 \\
    \bottomrule
    \end{tabular}%
  \label{tab:4}%
\end{table}%
We see in Table \ref{tab:4} problem 8 and 9 have error 0 with two function evaluations. These two instances are accidental as we found that the optimal solution of the first linear subproblem \eqref{subppc} is already the minimizer of the  objective function for these two problems given that we set $\Delta^0_0=1$ and we use the default initial points.
\begin{table}[htbp]
  \centering
  \caption{Results for nonconvex problems with $\Delta^0_0=\frac{1}{10}\normt{s^0},\ \beta=0.8$ }
    \begin{tabular}{cccccccccccccc}
    \toprule
    No.   & error & nf & pb    & se    & $k$   & L     & time  & t-CPX & $\Delta$ & sh    & a     & $a^{\min}$ & au \\
    \midrule
    1     & 0.0002435 & 40    & 37.50 & 25    & 19    & 5     & 0.311 & 0     & 0.10607 & 3     & 0.375 & 0.363 & 3 \\
    2     & 1.095E-05 & 20    & 5.00  & 19    & 6     & 12    & 0.247 & 0.016 & 0.03542 & 4     & 0     & 0     & 0 \\
    3     & 0.0001802 & 49    & 28.57 & 35    & 15    & 19    & 0.390 & 0     & 0.02140 & 6     & 0     & 0     & 0 \\
    4     & -6.636E+19 & 15    & 0.00  & 15    & 14    & 0     & 0.215 & 0     & 1000  & 0     & 3.78E+12 & 2.81E+12 & 13 \\
    5     & 1.587E-05 & 437   & 48.97 & 223   & 105   & 117   & 2.738 & 0.015 & 0.13473 & 28    & 0.196 & 0.163 & 49 \\
    6     & 0.0002527 & 458   & 26.64 & 336   & 75    & 260   & 4.295 & 0.11  & 0.04621 & 6     & 7.995 & 0     & 15 \\
    7     & 0.0001122 & 118   & 14.41 & 101   & 33    & 67    & 1.071 & 0.03  & 0.06833 & 3     & 0     & 0     & 0 \\
    8     & 0.0003365 & 1028  & 24.42 & 777   & 73    & 703   & 18.732 & 1.076 & 0.00693 & 16    & 0.027472 & 0.027465 & 46 \\
    9     & 3.334E-06 & 657   & 34.70 & 429   & 36    & 392   & 9.414 & 0.712 & 0.00136 & 12    & 0.005 & 0.003 & 12 \\
    10    & 2.918E-05 & 22741 & 81.43 & 4222  & 88    & 4133  & 369.968 & 18.958 & 0.00544 & 7     & 0     & 0     & 0 \\
    11    & 6.576E-06 & 9530  & 51.29 & 4642  & 180   & 4461  & 201.156 & 14.651 & 0.00238 & 7     & 2.485 & 0     & 4 \\
    12    & 8.237E-06 & 1801  & 64.02 & 648   & 73    & 574   & 21.416 & 1.587 & 0.00059 & 9     & 0     & 0     & 0 \\
    \bottomrule
    \end{tabular}%
  \label{tab:5}%
\end{table}%
We note that a huge negative error occurred in problem 4 in Table \ref{tab:5} because the problem HS78 is actually unbounded below and its optimal value in Table \ref{tl:1} is a local minimum. From Table \ref{tab:4} and \ref{tab:5} we see that the proportion of function evaluations in backtrack process can be very high. Problem 10, which has the biggest number of function evaluations also yields the highest possibility to backtrack. For all the problems, solving LP subproblems took a small amount of time. 
objective function. This special case appeared in all the tested Fuse errier Polynomials and Active Faces. We ran LPBNC on these problems with all choices of dimension $n\in\{2,10,10^2,10^3,10^4,10^5,10^6\}$. The starting point is $\vec{x^0}=[1,1,\dots,1]$, initial trust region radius $\Delta^0_0=1$. All theses instances terminated with stopping criterion reached, $f_{val}=0$, and $nf=3$. 
\section{Concluding remarks}
We present a version of bundle method with the unusual feature of using only an LP solver as its algorithmic engine.
We study the properties of the linear model and expressed its model reduction. 
The optimal solution of our linear subproblem is not unique in contrast to the case of quadratic subproblem. 
However, no significant information is lost in order to ensure convergence of the algorithm. We use a local convexificaton with the deletion of some cutting planes at the end of a major iteration. Preliminary numerical experiments show that the algorithm is reliable and efficient for solving convex problems. For functions that are locally Lipschitz continuous and prox-regular we show that upon successful convexification the algorithm can converge to a fixed point of the proximal mapping. Numerical results of nonconvex problems suggest that with insignificant time spent on solving LP subproblems, a big portion of function evaluations can be consumed in the backtrack process. Improvements such as incorporation of a line search can be further studied in the future in order to increase efficiency to enable the solution of large-scale problems. 
\bibliographystyle{spmpsci}
\bibliography{bibs}
\appendix
\section{Appendix}
\subsection{Proof for Theorem \ref{conv:th}}
In order to prove Theorem \ref{conv:th} we need the following preliminaries. Let $X$ denote a space with $\Vert \cdot \Vert $ its norm. For every $A\subseteq X$, $\alpha >0$ and $x^{\ast }\in X^{\ast }$ we
define 
\begin{equation*}
SL(x^{\ast },A,\alpha )=\left\{ x\in A\mid \langle x,x^{\ast }\rangle
>S(A,x^{\ast })-\alpha \right\} \mbox{\rm ,}
\end{equation*}%
where $S(A,x^{\ast }):=\sup \left\{ \langle x^{\ast },x\rangle \mid x\in
A\right\} $. We will denote a linear function on $X$ by $x^{\ast }$ or $\langle x^{\ast
},\cdot \rangle $ and their action an element $x$ by $x^{\ast }(x)$ or $%
\langle x^{\ast },x\rangle $. Denote the set of all strongly exposed points
of a set $C$ by $\exp \,C$ and the set of all extremal points by $\func{ext}%
\,C$. We have for any closed, proper convex function $h$ 
\begin{equation*}
\func{epi}h=\overline{\func{co}}\left( \func{ext}\func{epi}h\right) +\mathbf{%
R}_{+}=\overline{\func{co}}\left( \exp \func{epi}h\right) +\mathbf{R}_{+}.
\end{equation*} Suppose $(x,h(x))\in \func{ext}\,\func{epi}\,h$ then $h(x)=g(x)$.%
  In this section we use the following definition of proximal mapping 
$
P_{\lambda}\left( f\right) (x)\mathrel{\mathop:}=\arg\min\left\{ f(\cdot)+\frac{1}{2\lambda }%
\Vert x-\cdot\Vert^{2}\right\}
$ and $\overline{r}(f)$ is the associated prox-threshold. It is known that if $f:\mathbf{R}^{n}\rightarrow \mathbf{R}_{+\infty }$
is quadratically minorized by $\alpha -r\Vert \cdot \Vert ^{2}$ and $x\in 
\func{dom}f$ with $0<\lambda <\overline{r}(f,x)$ then for each $\beta
>f\left( x\right) $ we have $\ \exists \delta >0$ such that $\ $ 
\begin{equation}\label{lem:bound}
\Vert z-y\Vert \leq 2\lambda \frac{\left( \beta -\alpha +\frac{r}{2}\right)
\left( 1-2\delta \right) }{\left( 1-2\left( \delta +r\lambda \right) \right) 
}:=M_{\lambda } \ \forall\  y\in P_{\lambda }(f)(z)\ \forall\ z\in B_{\delta }(x).
\end{equation}%

\begin{lemma}
\label{lem:4}Suppose $h:D\rightarrow \mathbf{R}$ is an lower bounded,
real--valued lower semi--continuous convex function defined on a bounded
closed convex set $D\subseteq \mathbf{R}^{n},$ with interior. Then there
exists a $(y^{\ast },-1)$ strongly exposing $\text{epi }\,h$ at $(\bar{y},h(%
\bar{y}))$ if and only if $h-y^{\ast }$ achieves a strict minimum on $D$ at $%
\bar{y}$. That is, $\arg \min \left[ h-y^{\ast }\right] =\left\{ y\right\} $
or $0\in \partial \left[ h-y^{\ast }\right] \left( y\right) $ has a unique
solution $y$.
\end{lemma}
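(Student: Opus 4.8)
The plan is to reduce this statement about strong exposure of $\func{epi}h$ to a minimization statement for the convex function $\phi := h-y^{\ast }$, i.e. $\phi(y):=h(y)-\langle y^{\ast },y\rangle$, on the compact set $D$, and then to extract the \emph{strong}-convergence conclusion from the compactness of $D$ together with the lower semicontinuity of $h$. First I would record the key reduction: maximizing the linear functional $(y,r)\mapsto \langle y^{\ast },y\rangle-r$ over $\func{epi}h=\{(y,r)\mid y\in D,\ r\geq h(y)\}$ is the same as minimizing $\phi$ over $D$, since for fixed $y$ the term $-r$ is largest at $r=h(y)$. Thus
\begin{equation*}
S(\func{epi}h,(y^{\ast },-1))=\sup_{y\in D}\bigl(\langle y^{\ast },y\rangle-h(y)\bigr)=-\min_{y\in D}\phi(y)=:-m,
\end{equation*}
and every maximizer has the form $(y,h(y))$ with $y\in\arg\min_D\phi$. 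In particular $(\bar y,h(\bar y))$ is the unique maximizer of $(y^{\ast },-1)$ over $\func{epi}h$ exactly when $\bar y$ is the unique minimizer of $\phi$.

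For the direction ``$\Rightarrow$'', strong exposure implies in particular that $(y^{\ast },-1)$ attains its supremum at the single point $(\bar y,h(\bar y))$: if another point also attained it, the constant sequence at that point would have objective value equal to the supremum and so, by the strong-exposure property, would have to converge to $(\bar y,h(\bar y))$, a contradiction. By the reduction this forces $\arg\min_D\phi=\{\bar y\}$, which is the strict minimum stated in the lemma; since $\phi$ is convex this is equivalent to the optimality condition $0\in\partial(h-y^{\ast })(\bar y)$ having $\bar y$ as its unique solution.

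For the direction ``$\Leftarrow$'', assume $\arg\min_D\phi=\{\bar y\}$ and take any sequence $(y_n,r_n)\in\func{epi}h$ with $\langle y^{\ast },y_n\rangle-r_n\to-m$. From $r_n\geq h(y_n)$ one gets $\phi(y_n)=h(y_n)-\langle y^{\ast },y_n\rangle\leq r_n-\langle y^{\ast },y_n\rangle\to m$, and since always $\phi(y_n)\geq m$, it follows that $\phi(y_n)\to m$; that is, $\{y_n\}$ is a minimizing sequence. By compactness of $D$ and lower semicontinuity of $\phi$ (inherited from $h$), every cluster point of $\{y_n\}$ minimizes $\phi$ and hence equals $\bar y$, so $y_n\to\bar y$. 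Then, using $\langle y^{\ast },y_n\rangle\to\langle y^{\ast },\bar y\rangle$ and $\langle y^{\ast },y_n\rangle-r_n\to-m$, I obtain $r_n\to\langle y^{\ast },\bar y\rangle+m=h(\bar y)$. Hence $(y_n,r_n)\to(\bar y,h(\bar y))$, which is precisely the assertion that $(y^{\ast },-1)$ strongly exposes $\func{epi}h$ at $(\bar y,h(\bar y))$.

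The main obstacle is exactly the \emph{strong} part of strong exposure in the converse direction: one must show that \emph{every} minimizing sequence converges to $\bar y$, not merely that the minimizer is unique. This is where compactness of $D$ and lower semicontinuity of $h$ are indispensable, since on a noncompact domain a unique minimizer need not attract every minimizing sequence. Everything else is bookkeeping around the reduction; in particular, once $y_n\to\bar y$ and the objective values converge, the convergence $r_n\to h(\bar y)$ of the epigraph heights is immediate from the displayed identity $\langle y^{\ast },\bar y\rangle+m=h(\bar y)$.
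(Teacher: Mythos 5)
Your proof is correct and takes essentially the same route as the paper's: both reduce strong exposure of $\text{epi}\,h$ by $(y^{\ast},-1)$ to uniqueness of the minimizer of $h-y^{\ast}$ on $D$, and both rely on compactness of $D$ plus lower semicontinuity of $h$ to force the required convergence. The only cosmetic difference is that you verify strong exposure through its sequential characterization (every maximizing sequence in $\text{epi}\,h$ converges to the exposed point), whereas the paper verifies the equivalent slice-diameter characterization ($\text{diam}\,SL((-y^{\ast},1),\text{epi}\,h,\delta)\rightarrow 0$ via shrinking level sets $L(\bar{\beta}+\delta)$); the underlying argument is the same.
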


\begin{proof}
Indeed if $h-y^{\ast }$ achieves a strict minimum $\bar{\beta}:=\min_{y\in
D}\left\{ h(y)-\langle y^{\ast },y\rangle \right\} $ then we must have the
lower level sets 
\begin{equation*}
L(\beta ):=\left\{ y\in D\mid \beta >h(y)-\langle y^{\ast },y\rangle
\right\} 
\end{equation*}%
satisfying $\cap _{\beta <\bar{\beta}}L(\beta )=\{\bar{y}\}$. Otherwise
there would exists $y_{m}\in L(\beta _{m})$ for $\beta _{m}\downarrow \bar{%
\beta}$ converging to $y\neq \bar{y}$ with $y\in \cap _{\beta <\bar{\beta}%
}L(\beta )$ and by the lower continuity of $h$ we have 
\begin{equation*}
\liminf_{m}\beta _{m}=\bar{\beta}\geq \liminf_{m}\left( h(y_{m})-\langle
y^{\ast },y_{m}\rangle \right) \geq h(y)-\langle y^{\ast },y\rangle 
\end{equation*}%
implying $y\in \arg \min \left( h-y^{\ast }\right) .$ This contradicts the
uniqueness of the minimizer. Thus the slices 
\begin{equation*}
L(\bar{\beta}-\delta )=\left\{ y\in D\mid h(\bar{y})-\langle y^{\ast },\bar{y%
}\rangle +\delta >h(y)-\langle y^{\ast },y\rangle \right\} \rightarrow
\left\{ \bar{y}\right\} 
\end{equation*}%
as $\delta \downarrow 0$. 
Observe that $S(\text{epi }\,h,(-y^{\ast },1))=\langle
(\bar{y},h(\bar{y})),(-y^{\ast },1)\rangle $ and so 
\begin{equation*}
L(\bar{\beta}+\delta )=\left\{ y\in D\mid \langle (y,h(y)),(-y^{\ast
},1)\rangle >S(\text{epi }\,h,(-y^{\ast },1))-\delta \right\} .
\end{equation*}%
Finally note that on $\text{epi }\,h$ we have 
\begin{eqnarray}
SL((-y^{\ast },1),\text{epi }\,h,\delta ) &=&\{(y,\alpha )\in \text{epi }%
\,h\mid \langle (y,\alpha ),(-y^{\ast },1)\rangle >S(\text{epi }\,h,(-y^{\ast
},1))-\delta \}  \notag \\
&&\qquad \qquad \qquad \qquad   \label{eqn:6}
\end{eqnarray}%
and $\text{diam}\,SL((-y^{\ast },1),\text{epi }\,h,\delta )\leq \,\text{diam}%
L(\bar{\beta}+\delta )\times \delta \rightarrow 0$ as $\delta \downarrow 0$.

If $(y^{\ast },-1)$ strongly exposing $\text{epi }\,h$ at $(\bar{y},h(\bar{y}%
))$ then (\ref{eqn:6}) defines a slice whose diameter tends to zero. It then
holds that the projection of this onto $D$ also has a diameter tending to
zero from which it follows that $\text{diam}\,L(\bar{\beta}+\delta
)\rightarrow 0$ as $\delta \downarrow 0$. Clearly we have then $\cap _{\beta
<\bar{\beta}}L(\beta )=\{\bar{y}\}$ and so $h-y^{\ast }$ achieves a strict
mimimum $\bar{\beta}$ at $\bar{y}$.
\end{proof}
\begin{lemma}
\label{lem:5} Suppose $h:D\rightarrow \mathbf{R}$ is an lower bounded,
real--valued lower semi--continuous, convex function defined on a bounded
closed convex set $D\subseteq \mathbf{R}^{n},$ with interior. Suppose $%
(x,h(x))$ is an extremal point of $\func{epi }\,h$ and suppose $%
\{(x_{m},\alpha _{m})\}_{m=0}^{\infty }\subseteq \func{epi }\,h$ with $%
x_{m}=\sum_{i=0}^{n}\lambda _{i}^{m}x_{i}^{m}\rightarrow x$ and $\alpha
_{m}=\sum_{i=0}^{n}\lambda _{i}^{m}h(x_{i}^{m})\rightarrow h(x)$. Then
either $\lambda _{i}^{m}\rightarrow 0$ or $x_{i}^{m}\rightarrow $ $x$ and
hence for some $i$ we have $x_{i}^{m}\rightarrow $ $x$.
\end{lemma}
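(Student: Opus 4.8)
The plan is to combine a compactness-based subsequence extraction with the defining property of an extreme point of $\func{epi} h$. Since $D$ is compact and the multipliers $\lambda_i^m$ lie in the compact unit simplex, I would first pass to a subsequence along which, simultaneously, $\lambda_i^m \to \lambda_i$ and $x_i^m \to \bar{x}_i \in D$ for every $i = 0, \dots, n$; here $\sum_i \lambda_i = 1$ and, because the points $x_i^m$ are bounded, $x = \lim_m x_m = \sum_i \lambda_i \bar{x}_i$. I then split the index set into $A = \{ i : \lambda_i > 0 \}$ and $B = \{ i : \lambda_i = 0 \}$, noting $A \neq \emptyset$.

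The first technical step is to control the heights $h(x_i^m)$. For $i \in A$ the multiplier stays bounded away from $0$, so from $\alpha_m \to h(x)$ and the lower bound $h \geq c > -\infty$ one sees that $h(x_i^m)$ is bounded; after a further extraction $h(x_i^m) \to \beta_i$ with $\beta_i \geq h(\bar{x}_i)$ by lower semicontinuity, whence $(\bar{x}_i, \beta_i) \in \func{epi} h$. For $i \in B$ I would not try to control $h(x_i^m)$ pointwise but only in aggregate: since $\lambda_i^m h(x_i^m) \geq \lambda_i^m c$ and $\sum_{i\in B}\lambda_i^m \to 0$, the tail $\sum_{i\in B}\lambda_i^m h(x_i^m)$ has nonnegative $\liminf$. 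Because the $A$-sum converges and $\alpha_m \to h(x)$, this tail in fact converges, to a limit $\geq 0$, which yields $\sum_{i\in A}\lambda_i \beta_i \leq h(x)$.

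Now I would invoke convexity and extremality. The point $\bigl(x, \sum_{i\in A}\lambda_i\beta_i\bigr) = \sum_{i\in A}\lambda_i(\bar{x}_i,\beta_i)$ is a convex combination of points of $\func{epi} h$, hence lies in $\func{epi} h$, giving $\sum_{i\in A}\lambda_i\beta_i \geq h(x)$; combined with the previous inequality this forces equality, so $(x,h(x)) = \sum_{i\in A}\lambda_i(\bar{x}_i,\beta_i)$ is a genuine convex combination (positive weights summing to $1$) of epigraph points. Since $(x,h(x))$ is an extreme point of $\func{epi} h$, every $(\bar{x}_i,\beta_i)$ with $i\in A$ must coincide with $(x,h(x))$; in particular $\bar{x}_i = x$, i.e. $x_i^m \to x$ for each $i\in A$. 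As $A\neq\emptyset$, this already gives the final assertion that $x_i^m \to x$ for some $i$. To obtain the stated dichotomy for every $i$ along the original sequence, I would argue by contradiction: if some $i$ had $\lambda_i^m \nrightarrow 0$ and $x_i^m\nrightarrow x$, I could select a subsequence on which $\lambda_i^m$ is bounded below and $\|x_i^m-x\|$ is bounded away from $0$, and applying the above (with this $i\in A$) along a further convergent subsequence would force $\bar{x}_i = x$, a contradiction.

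The main obstacle is the careful handling of the $i \in B$ terms together with the lower semicontinuity: one must avoid assuming $h$ is bounded above or continuous on $\partial D$ (a real-valued convex $h$ can fail both there), so the heights $h(x_i^m)$ for $i\in B$ may be individually uncontrolled, and only the lower bound of $h$ combined with $\lambda_i^m\to 0$ prevents their weighted contribution from corrupting the limit. Driving the inequality $\sum_{i\in A}\lambda_i\beta_i \leq h(x)$ into equality against the reverse epigraph inequality is the crux that activates extremality.
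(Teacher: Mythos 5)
Your proof is correct, and although it rests on the same basic ingredients as the paper's argument — compactness of $D$ and of the weight simplex, lower semicontinuity, the lower bound on $h$, convexity of $\func{epi}h$, and extremality — it is organized around a genuinely different decomposition. The paper isolates one offending index $i$, lumps the remaining terms into a single renormalized point with weights $\lambda_j^m/(1-\lambda_i^m)$, and contradicts extremality via a two-point representation $(x,h(x))=\lambda(x',\alpha')+(1-\lambda)(x'',\alpha'')$; this forces it to rule out $\lambda=1$ separately and to apply lower semicontinuity to the aggregated block as well as to $x_i^m$. You instead keep the full $(n+1)$-point combination, extract one subsequence along which all weights and points converge, split the indices into $A=\{i:\lambda_i>0\}$ and $B=\{i:\lambda_i=0\}$, neutralize the $B$-block only in aggregate via $h\geq c$ and $\sum_{i\in B}\lambda_i^m\to 0$, and then squeeze $\sum_{i\in A}\lambda_i\beta_i$ to equal $h(x)$ from both sides (the tail estimate gives $\leq$, convexity of the epigraph gives $\geq$), so that $(x,h(x))$ is exhibited as a positive convex combination of epigraph points and the extreme-point property yields $\bar{x}_i=x$ for every $i\in A$ directly. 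Your route buys a cleaner, forward-running core argument: no renormalization, no special case $\lambda=1$, and lower semicontinuity is needed only on the $A$-indices. One caveat, which you share with the paper rather than introduce: in transferring the dichotomy back to the original sequence, both you and the paper pass from ``$\lambda_i^m\nrightarrow 0$ and $x_i^m\nrightarrow x$'' to a single subsequence on which $\lambda_i^m$ is bounded below \emph{and} $\Vert x_i^m-x\Vert$ is bounded away from zero; strictly speaking these two negations only provide two possibly disjoint subsequences (the sequence could alternate between the two failure modes), so what is really proved is the subsequential dichotomy — every simultaneous cluster point $(\lambda_i,\bar{x}_i)$ has $\lambda_i=0$ or $\bar{x}_i=x$ — which is exactly the form used later in Corollary \ref{cor:convex}, so nothing downstream is affected.
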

\begin{proof}
We now claim that either $\lambda _{i}^{m}\rightarrow 0$ or $%
x_{i}^{m}\rightarrow $ $x$ for otherwise by a compactness argument (on $%
[0,1]\times D$) we could extract a convergent pair of sub-sequences such
that after renumbering we would have $\lambda _{i}^{m}\rightarrow \lambda
\neq 0$ and $x_{i}^{m}\rightarrow x^{\prime }\neq x$ (note that $\lambda \in
\lbrack 0,1]$ and $x^{\prime }\in D$). Then as $\lambda
_{i}^{m}x_{i}^{m}\rightarrow \lambda x^{\prime }$ converges as does $%
\sum_{i=0}^{n}\lambda _{i}^{m}x_{i}^{m}\rightarrow x$ we have convergence of 
$\sum_{j\neq i}\left( \frac{\lambda _{j}^{m}}{1-\lambda _{i}^{m}}\right)
x_{j}^{m}$ to $x^{\prime \prime }\in D$ via 
\begin{equation*}
\lambda _{i}^{m}x_{i}^{m}+\left( 1-\lambda _{i}^{m}\right) \sum_{j\neq
i}\left( \frac{\lambda _{j}^{m}}{1-\lambda _{i}^{m}}\right)
x_{j}^{m}\rightarrow \lambda x^{\prime }+\left( 1-\lambda \right) x^{\prime
\prime }=x.
\end{equation*}%
It is not possible for $\lambda =1$ because this implies $x^{\prime }=x$.
Next note that 
\begin{equation}
\lambda _{i}^{m}h(x_{i}^{m})+\left( 1-\lambda _{i}^{m}\right) \sum_{j\neq
i}\left( \frac{\lambda _{j}^{m}}{1-\lambda _{i}^{m}}\right)
h(x_{j}^{m})\rightarrow h(x)  \label{eqn:4}
\end{equation}%
and by lower semi--continuity of $h$ we have $\liminf_{m}h(x_{i}^{m})\geq
h(x^{\prime })$. If we take a subsequence along which the limit infimum $%
\liminf_{m}h(x_{i}^{m})$ is achieved then (\ref{eqn:4}) implies $\left\{
\lim_{m_{k}}\sum_{j\neq i}\left( \frac{\lambda _{j}^{m_{k}}}{1-\lambda
_{i}^{m_{k}}}\right) h(x_{j}^{m_{k}})\right\} $ converges along this
subsequence as well. Let 
\begin{equation*}
\liminf_{m_{k}}\sum_{j\neq i}\left( \frac{\lambda _{j}^{m_{k}}}{1-\lambda
_{i}^{m_{k}}}\right) (x_{j}^{m_{k}},h(x_{j}^{m_{k}}))=(x^{\prime \prime
},\alpha ^{\prime \prime })\in \func{epi }\,h,
\end{equation*}%
with $h(x^{\prime \prime })\leq \alpha ^{\prime \prime }$ by the definition
of $\func{epi }\,h$. Then from (\ref{eqn:4}) and the lower semi--continuity
of $h$ again it follows that 
\begin{equation*}
\lambda h(x^{\prime })+\left( 1-\lambda \right) h(x^{\prime \prime })\leq
h(x)
\end{equation*}%
Thus there exists $(x^{\prime },\alpha ^{\prime })$, $(x^{\prime \prime
},\alpha ^{\prime \prime })\in \func{epi }\,h$ such that $\lambda (x^{\prime
},\alpha ^{\prime })+\left( 1-\lambda \right) (x^{\prime \prime },\alpha
^{\prime \prime })=(x,h(x))$ with $0<\lambda <1$ contradicting the
assumption that $(x,h(x))$ is an extremal point of $\func{epi }\,h$.

As either $\lambda _{i}^{m}\rightarrow 0$ or $x_{i}^{m}\rightarrow $ $x$ it
follows that for some $i$ we have $x_{i}^{m}\rightarrow $ $x$ since $%
\sum_{i=0}^{n}\lambda _{i}^{m}=1$ precludes all $\lambda _{i}^{m}$ from
tending to zero.
\end{proof}
\begin{proposition}
\label{prop:noconv}Suppose $h:D\rightarrow \mathbf{R}$ is an lower bounded,
real--valued lower semi--continuous, convex function defined on a bounded
closed convex set $D\subseteq \mathbf{R}^{n},$ with interior. In addition
suppose the strongly exposed points on $h$ are dense on the boundary of $%
\text{epi }h|_{C}$ where $C\subseteq D$ and $C$ is closed with $\text{int}%
C\neq \emptyset $. Define $g$ via $\text{epi }g:=\overline{\exp \text{epi }%
h|_{C}}$ then 
\begin{equation*}
h\left( x\right) =g\left( x\right) \quad \text{for }x\in C\text{. }
\end{equation*}
\end{proposition}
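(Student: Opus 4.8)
The plan is to prove the two inequalities $g\ge h$ and $g\le h$ separately on $C$, the key observation throughout being that every strongly exposed point of $\text{epi}\,h|_{C}$ is a graph point of the form $(x,h(x))$ with $x\in C$. (Indeed a strongly exposed point must be extremal and lie on the lower boundary, so nothing strictly above the graph can be strongly exposed; by Lemma \ref{lem:4} each such point corresponds to $h-y^{\ast}$ attaining a strict minimum.)

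First I would establish $g\ge h$ by a closedness/inclusion argument. Since each point of $\exp\text{epi}\,h|_{C}$ has the form $(x,h(x))$ with $x\in C\subseteq D$, it lies in $\text{epi}\,h$, which is closed and convex because $h$ is lower semicontinuous and convex on the closed convex set $D$. Hence $\overline{\exp\text{epi}\,h|_{C}}\subseteq\text{epi}\,h$, so $\text{epi}\,g\subseteq\text{epi}\,h$ and therefore $g(x)\ge h(x)$ for every $x$, in particular on $C$. Here lower semicontinuity of $h$ guarantees that any limit $(x,\alpha)$ of graph points satisfies $\alpha\ge h(x)$, so the inclusion is genuine.

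Second, and this is the substantive direction, I would show $(x,h(x))\in\text{epi}\,g$ for each $x\in C$, which gives $g(x)\le h(x)$. Any such point is the lowest point of $\text{epi}\,h|_{C}$ over $x$ and hence lies on $\partial\text{epi}\,h|_{C}$. By the density hypothesis it is a limit of strongly exposed points $(x_{k},h(x_{k}))\in\exp\text{epi}\,h|_{C}\subseteq\text{epi}\,g$, and since $\text{epi}\,g$ is closed the limit $(x,h(x))$ belongs to $\text{epi}\,g$. Combined with the first step this yields $g(x)=h(x)$ on $C$.

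The delicate point, and the main obstacle, is controlling the convex-hull/closure structure used to form $\text{epi}\,g$: a priori the convexification could create boundary points of $\text{epi}\,g$ lying strictly below the graph of $h$, so one must be sure no ``interior'' convex combination drags $g$ below $h$. The inclusion $\text{epi}\,g\subseteq\text{epi}\,h$ rules this out globally, but to match the lower boundary of $\text{epi}\,g$ with the graph of $h$ at points that are not themselves obvious limits of exposed points I would invoke Lemma \ref{lem:5}. Writing an extreme point $(x,g(x))$ of $\text{epi}\,g$ as a limit of convex combinations $\sum_{i}\lambda_{i}^{m}(x_{i}^{m},h(x_{i}^{m}))$ of strongly exposed points, Lemma \ref{lem:5} forces $x_{i}^{m}\to x$ for some $i$, so by continuity of $h$ on $\text{int}\,C$ the contributing graph point converges to $(x,h(x))$ and hence $g(x)=h(x)$ there; the equality then propagates from these extreme/exposed points to all of $\text{int}\,C$ by density of the exposed points together with the continuity of $h$ and $g$ on $\text{int}\,C$, and finally to $C$ by lower semicontinuity. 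Thus the hard work is the bookkeeping reconciling the representation $\text{epi}\,g=\overline{\exp\text{epi}\,h|_{C}}$ with the density hypothesis, rather than either individual inequality.
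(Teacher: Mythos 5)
Your proof is correct, and it takes a genuinely different, more elementary route than the paper's. The paper's own argument goes through the representation $\text{epi }h|_{C}=\text{co }\exp \text{epi }h|_{C}$ as an intersection of supporting half-spaces at the strongly exposed points, and then derives a contradiction: if some $(y,\alpha)$ in that set lay outside $\text{epi }g$, the density hypothesis (used through the formula $g(y)=\liminf h(x)$ along exposed points $x\rightarrow y$) together with the supporting-hyperplane inequalities would force $g(y)\leq \alpha$. You dispense with the half-space representation and the hyperplane bookkeeping altogether: first, $\text{epi }g=\overline{\exp \text{epi }h|_{C}}\subseteq \text{epi }h$ because $\text{epi }h$ (indeed $\text{epi }h|_{C}$, as $C$ is closed and $h$ is l.s.c.) is closed, which gives $g\geq h$; second, every graph point $(x,h(x))$ with $x\in C$ lies on the boundary of $\text{epi }h|_{C}$, hence by the density hypothesis is a limit of strongly exposed points, hence lies in the closed set $\text{epi }g$, which gives $g\leq h$ on $C$. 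Your version is shorter, uses the density hypothesis exactly once (pointwise, at graph points), and avoids the paper's unproved representation claim as well as the tacit boundedness of the exposing functionals needed in its liminf step; what the paper's heavier machinery buys is that it is the pattern reused in Corollary \ref{cor:convex}, where closed convex hulls genuinely enter. This also explains why your closing paragraph is superfluous and slightly off target: $\text{epi }g$ is defined here as a plain closure of the set of strongly exposed points, not a closed convex hull, so no convex combination can drag $g$ below $h$, and Lemma \ref{lem:5} is not needed for this proposition (it is needed for Corollary \ref{cor:convex}); your first two paragraphs already constitute a complete proof. Finally, note that both your argument and the paper's rest on the same two tacit readings of the statement: ``dense on the boundary'' must mean dense in the graph portion of the boundary (strongly exposed points are graph points, so they cannot be dense on the vertical side walls of $\text{epi }h|_{C}$), and $g$ must be understood as the lower envelope of $\overline{\exp \text{epi }h|_{C}}$, since that set is not literally an epigraph.
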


\begin{proof}
On $C$ take a $\left( x,h\left( x\right) \right) \in \exp \text{epi }h$ and
any supporting hyperplane generated by $\left( -y^{\ast },1\right) $. By
definition of exposedness we have 
\begin{equation*}
\text{epi }h\cap \left\{ \left( y,\alpha \right) \mid \langle (-y^{\ast
},1),(x,h(x))\rangle \geq \langle (-y^{\ast },1),(y,\alpha )\rangle \right\}
=\left\{ \left( x,h\left( x\right) \right) \right\} .
\end{equation*}%
Now 
\begin{eqnarray*}
\text{epi }h|_{C}\  &=&\text{co }\exp \text{epi }h|_{C}=\cap _{\substack{ %
\left( x,h\left( x\right) \right) \in \exp \text{epi }h \\ x\in C}}\left\{
\left( y,\alpha \right) \mid \langle (-y^{\ast },1),(x,h(x))\rangle \leq
\langle (-y^{\ast },1),(y,\alpha )\rangle \right\} \cap \left[ C\times 
\mathbf{R}\right]  \\
&\subseteq &\text{epi }h.
\end{eqnarray*}%
Now suppose there exists $\left( y,\alpha \right) \in \text{co }\exp \text{epi%
}h|_{C}\cap \left( \text{epi }g\right) ^{c}$. Using the density of $\exp 
\text{epi }h|_{C}$ we have 
\begin{equation*}
g\left( y\right) =\liminf_{\substack{ \left( x,h\left( x\right) \right) \in
\exp \text{epi }h \\ x\in C\text{, }x\rightarrow y}}h\left( x\right) .
\end{equation*}%
As $\left( x,h\left( x\right) \right) \in \exp \text{epi }h\subseteq \text{epi%
}h$ with $x\in C$ and $\left( y,\alpha \right) \in \text{co }\exp \text{epi }%
h|_{C}\ $implies\\ $\langle (-y^{\ast },1),(x,h(x))\rangle \leq \langle
(-y^{\ast },1),(y,\alpha )\rangle $ we have 
\begin{equation*}
\liminf_{\substack{ \left( x,h\left( x\right) \right) \in \exp \text{epi }h
\\ x\in C\text{, }x\rightarrow y}}\langle (-y^{\ast },1),(x,h(x))\rangle
=\langle (-y^{\ast },1),(y,g(y))\rangle \leq \langle (-y^{\ast
},1),(y,\alpha )\rangle .
\end{equation*}%
Thus $g(y)\leq \alpha $ or $\left( y,\alpha \right) \in \text{epi }g,$ a
contradiction. Hence $\text{co }\exp \text{epi }h|_{C}\cap \left[ C\times 
\mathbf{R}\right] \subseteq \text{epi }g$ giving the equality $\text{epi }%
h|_{C}\ =\text{co }\exp \text{epi }h|_{C}\cap \left[ C\times \mathbf{R}\right]
=\text{epi }g.$
\end{proof}

\begin{corollary}
\label{cor:convex}Suppose $g:D\rightarrow \mathbf{R}$ is an lower bounded,
real--valued  continuous function defined on a bounded closed set $%
D\subseteq \mathbf{R}^{n},$ with interior. If the strongly exposed points of 
$\text{epi }g$ are dense in the boundary of $\text{epi }g$ then $\,g\left(
x\right) =\text{co }\,\,g\left( x\right) $ for all $x\in D$ and hence $g$ is
the restriction to $D$ of a convex function. In particular this is true when 
$0\in \partial \left[ g-y^{\ast }\right] \left( y\right) $ has a unique
solution for all $y^{\ast }\in B_{1}\left( 0\right) $ such that $\left(
-y^{\ast },1\right) $ supports $\text{epi }g$.
\end{corollary}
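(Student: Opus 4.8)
The plan is to compare $g$ with its closed convex hull $\bar g:=\func{co}\,g$, the largest lower semicontinuous convex minorant of $g$, whose epigraph is $\func{epi}\bar g=\overline{\func{co}}(\func{epi}g)$. Because $g$ is real--valued and lower bounded on the bounded set $D$, the function $\bar g$ is proper, closed and convex, finite and continuous on $\func{int}D$, and satisfies $\bar g\le g$ on $D$. The entire task is to upgrade this inequality to an equality on $D$: once $\bar g=g$ on $D$, the function $g$ is exhibited as the restriction to $D$ of the globally convex function $\bar g$, which is exactly the assertion $g=\func{co}\,g$.

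The decisive step is to show that $\bar g$ and $g$ agree at every strongly exposed point of $\func{epi}g$. Suppose $(x_0,g(x_0))$ is strongly exposed by a functional $(y^*,-1)$. Since the supremum of a linear functional over a set equals its supremum over the closed convex hull, $S(\func{epi}\bar g,(y^*,-1))=S(\func{epi}g,(y^*,-1))=\langle y^*,x_0\rangle-g(x_0)$. Evaluating the first supremum at the feasible point $(x_0,\bar g(x_0))\in\func{epi}\bar g$ gives $\langle y^*,x_0\rangle-\bar g(x_0)\le\langle y^*,x_0\rangle-g(x_0)$, i.e. $\bar g(x_0)\ge g(x_0)$; together with $\bar g(x_0)\le g(x_0)$ this forces $\bar g(x_0)=g(x_0)$. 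This is precisely the extreme--point property recorded just before Lemma \ref{lem:4}, since strongly exposed points are extreme.

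Next I would propagate this equality by density and continuity. By hypothesis the strongly exposed points are dense in the boundary of $\func{epi}g$, which over $\func{int}D$ coincides with the graph of $g$; hence each $x\in\func{int}D$ is a limit of points $x_m$ with $(x_m,g(x_m))$ strongly exposed. By the previous step $\bar g(x_m)=g(x_m)$, and letting $m\to\infty$, the continuity of $g$ on $D$ together with the continuity of the convex function $\bar g$ on $\func{int}D$ yields $\bar g(x)=g(x)$ for all $x\in\func{int}D$. Approximating a boundary point of $D$ from inside $\func{int}D$ and again invoking continuity of $g$ then extends the identity to all of $D$, giving $g=\func{co}\,g$ on $D$.

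For the final ``in particular'' assertion, a supporting functional $(-y^*,1)$ of $\func{epi}g$ touches the boundary where $g-y^*$ is minimized, so the stated condition that $0\in\partial[g-y^*](y)$ has a unique solution for every such $y^*$ says exactly that each supporting hyperplane meets $\func{epi}g$ at a single (exposed) point. Combining the characterization of strong exposedness in Lemma \ref{lem:4} with Straszewicz's theorem --- strongly exposed points are dense in the extreme points of a closed bounded convex set --- this uniqueness is enough to force the strongly exposed points to be dense in the boundary, verifying the hypothesis. The step I expect to be the main obstacle is precisely this density argument together with the behaviour at $\partial D$: while the supremum of the exposing functional transfers to $\func{epi}\bar g$ immediately, strong exposedness itself need \emph{not} transfer between $\func{epi}g$ and its convex hull (the associated slices may fail to shrink), so the density must be argued directly on $\func{epi}g$, and extra care is required near $\partial D$, where the convex envelope $\bar g$ can fail to be continuous.
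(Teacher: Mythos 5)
Your first two steps are sound, and in fact cleaner than the paper's corresponding steps: since the supremum of a linear functional over a set equals its supremum over the closed convex hull, $\bar g=g$ at every strongly exposed point of $\mathrm{epi}\,g$, and density plus continuity of $g$ and of the convex function $\bar g$ on $\mathrm{int}\,D$ settles the identity on $\mathrm{int}\,D$. The genuine gap is the sentence ``approximating a boundary point of $D$ from inside $\mathrm{int}\,D$ and again invoking continuity of $g$ then extends the identity to all of $D$.'' For $x\in\partial D\cap D$ and $x_m\in\mathrm{int}\,D$ with $x_m\to x$ you get $\bar g(x_m)=g(x_m)\to g(x)$, and what you then need is $\bar g(x)\geq\lim_m\bar g(x_m)$, i.e.\ \emph{upper} semicontinuity of $\bar g$ at $x$ along that sequence; lower semicontinuity only re-derives $\bar g(x)\leq g(x)$, which you already have. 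Upper semicontinuity is exactly what a finite convex function can lose at boundary points of its domain: $\phi(x,y)=x^2/y$ on $\{(x,y)\mid y\geq x^2,\ x^2+y^2\leq 1\}$ with $\phi(0,0)=0$ is convex, bounded and lsc, yet $\limsup\phi=1$ along $y=x^2$. Radial upper semicontinuity along a segment ending at $x$ would suffice, but $D$ is not assumed convex, so a segment from $\mathrm{int}\,D$ to $x$ need not stay in $D$ and you cannot assert $\bar g=g$ along it. Applying the density hypothesis directly at $(x,g(x))$ does not help either: it produces points where $\bar g=g$ converging to $(x,g(x))$, and lsc of $\bar g$ again yields only the inequality you already have. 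You flagged this obstacle yourself (``extra care is required near $\partial D$''), but the care is never supplied, and it is precisely the hard part of the statement.

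This boundary difficulty is why the paper takes the route you set aside. The transfer of strong exposedness from $\mathrm{epi}\,g$ to $\mathrm{epi}\,h$ with $h=\mathrm{co}\,g$ --- which you assert ``need not'' hold --- is exactly what the paper proves in the body of this corollary: it takes a non-shrinking slice of $\mathrm{epi}\,h$, writes its points as Caratheodory combinations of points of $\mathrm{epi}\,g$, and uses the fact that $(x,g(x))$ is the unique maximizer of the exposing functional over $\mathrm{epi}\,g$ to force all non-vanishing components back to $x$, a contradiction. Once strong exposedness lives on the \emph{convex} set $\mathrm{epi}\,h$, the convex-representation machinery becomes available --- Lemma \ref{lem:5} (extreme points are limits of strongly exposed ones) and Proposition \ref{prop:noconv} ($\mathrm{epi}\,h|_{D}$ is recovered as $\overline{\exp\,\mathrm{epi}\,h|_{D}}$) --- so that for every $y\in D$, boundary points included, $h(y)$ is a liminf of values $g(x)$ over exposed $x\to y$, whence $h(y)\geq g(y)$ by continuity of $g$. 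The transfer is thus not a dispensable technicality; it is what buys the boundary case. Finally, in the ``in particular'' part you invoke Straszewicz's theorem, but that theorem concerns closed convex sets, and $\mathrm{epi}\,g$ is not convex here (that is the whole point of the corollary); the paper instead extracts strong exposedness from the uniqueness hypothesis via Lemma \ref{lem:4}, and in the actual application (Theorem \ref{conv:th1}) it verifies directly that every graph point is strongly exposed, which is how the density hypothesis is really met.
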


\begin{proof}
Let $h:=\text{co }\text{epi }g=\text{epi }\text{co }g$ which has a convex domain 
$\text{co }D$ with $\text{int}\text{co }D\supseteq \text{int}D\neq \emptyset .$
Clearly a strongly exposed point of $h$ must be a strongly exposed point of $%
g$. We claim the converse is true. Let $\left( x,g\left( x\right) \right) $
be strongly exposed in $\text{epi }g$. Suppose $\left( x,h\left( x\right)
\right) $ is not strongly exposed in $\text{epi }h$ then there exists $%
(-y^{\ast },1$ $)$ supporting $\text{epi }\,h$ at $(x,h(x))$ such that for
some $\delta >0$ and all $\varepsilon >0$ there exists $(y^{\varepsilon
},h(y^{\varepsilon }))\in \text{epi }\,h$ with $\left\Vert (y^{\varepsilon
},h(y^{\varepsilon }))-\left( x,h\left( x\right) \right) \right\Vert \geq
\delta >0$ and 
\begin{equation*}
\langle (-y^{\ast },1),(x,h(x))\rangle +\varepsilon \geq \langle (-y^{\ast
},1),(y,h\left( y^{\varepsilon }\right) )\rangle .
\end{equation*}%
As $g\left( x\right) \geq h\left( x\right) $ and $h\left( y^{\varepsilon
}\right) =\sum_{i=0}^{n}\lambda _{i}^{\varepsilon }g\left(
x_{i}^{\varepsilon }\right) $ for some $\sum_{i=0}^{n}\lambda
_{i}^{\varepsilon }=1$, $\lambda _{i}^{\varepsilon }\geq 0$ and $%
y^{\varepsilon }=\sum_{i=0}^{n}\lambda _{i}^{\varepsilon }x_{i}^{\varepsilon
}$ we have 
\begin{eqnarray*}
\langle (-y^{\ast },1),(x,g(x))\rangle +\varepsilon \geq
\sum_{i=0}^{n}\lambda _{i}^{\varepsilon }\langle (-y^{\ast
},1),(x_{i}^{\varepsilon },g\left( x_{i}^{\varepsilon }\right) )\rangle  &&
\\
\text{or \qquad }\sum_{i=0}^{n}\lambda _{i}^{\varepsilon }\langle (-y^{\ast
},1),(x,g(x))-(x_{i}^{\varepsilon },g\left( x_{i}^{\varepsilon }\right)
)\rangle  &\geq &-\varepsilon .
\end{eqnarray*}%
As $(-y^{\ast },1$ $)$ supporting $\text{epi }\,h$ we have $\langle (-y^{\ast
},1),(x,g(x))-(x_{i}^{\varepsilon },g\left( x_{i}^{\varepsilon }\right)
)\rangle \leq 0$ for all $i$. Thus for all $i$ with $\lambda
_{i}^{\varepsilon }>0$ we have 
\begin{eqnarray*}
&&\lambda _{i}^{\varepsilon }\langle (-y^{\ast
},1),(x,g(x))-(x_{i}^{\varepsilon },g\left( x_{i}\right) )\rangle  \\
&\geq &\sum_{i=0}^{n}\lambda _{i}^{\varepsilon }\langle (-y^{\ast
},1),(x,g(x))-(x_{i}^{\varepsilon },g\left( x_{i}\right) )\rangle \geq
-\varepsilon .
\end{eqnarray*}%
We have $\langle (-y^{\ast },1),(x,g(x))-(x_{i}^{\varepsilon },g\left(
x_{i}^{\varepsilon }\right) )\rangle \geq -\varepsilon /\lambda
_{i}^{\varepsilon }$ for all $\varepsilon >0$ and $i$. Taking a convergent
subsequence for $\varepsilon \downarrow 0$ we may consider $\lambda
_{i}^{\varepsilon }\rightarrow \lambda _{i}>0$ with $x_{i}^{\varepsilon
}\rightarrow x$ and note that $\langle (-y^{\ast
},1),(x,g(x))-(x_{i},g\left( x_{i}\right) )\rangle \geq 0$ implies $x_{i}=x$
as $\left( x,g\left( x\right) \right) $ is strongly exposed in $\text{epi }g$%
. As we only require to consider convex combination of length $n+1$
(Caratheodory's theorem) we have (using $x_{i}^{\varepsilon }\in D$ a
bounded set and $\lambda _{i}^{\varepsilon }\rightarrow 0$ if $%
x_{i}^{\varepsilon }\not\rightarrow x$) we obtain 
\begin{equation*}
y^{\varepsilon }=\sum_{i=0}^{n}\lambda _{i}^{\varepsilon }x_{i}^{\varepsilon
}\rightarrow \sum_{i=0}^{n}\lambda _{i}x=x\left( \sum_{i=0}^{n}\lambda
_{i}\right) =x
\end{equation*}%
contradicting $\left\Vert (y^{\varepsilon },h(y^{\varepsilon }))-\left(
x,h\left( x\right) \right) \right\Vert \geq \delta >0$ for all $\varepsilon
>0.$

Hence the strongly exposed points of $h$ are dense in $\text{epi }h$. Now the
extremal points of $h$ are contained in the convex closure of the strongly
exposed points i.e. $\overline{\text{co }}\left( \text{ext }\text{epi }h\right)
=\overline{\text{co }}\left( \exp \text{epi }g\right) $. Using Lemma \ref%
{lem:5} the extremal points are actually limits of strongly exposed points.
As strongly exposed points are also extremal points we find that the
extremal points of $h$ are dense in $\text{epi }h.$ For
all 
$
\left( x,h(x)\right) \in \text{ext }\,\text{epi }h=\text{ext }\text{co }\left(
g\right)
$ we have $h\left( x\right) =g\left( x\right) $. Thus $h$ and $g$ coincide on the
dense set of exposed points of $\text{epi }h$ (and $\text{epi }g$). Now apply
Proposition \ref{prop:noconv} to deduce that $\text{epi }h|_{D}:=\overline{%
\exp \text{epi }g|_{D}}.$ That is for $y\in D$ we have 
\begin{equation*}
h\left( y\right) =\liminf_{\substack{ \left( x,h\left( x\right) \right) \in
\exp \text{epi }h  \\ x\in D\text{, }x\rightarrow y}}g\left( x\right) \geq
g\left( y\right) .
\end{equation*}%
But by construction $g\left( y\right) \geq h\left( y\right) $ so $g\left(
y\right) =h\left( y\right) =\text{co }g\left( y\right) $ for $y\in D$. Apply
Lemma \ref{lem:4} for the last observation.
\end{proof}
Let $\bar{\eta}>0$ be the threshold value for which Proposition \ref%
{convexific} holds. Take $\eta >\bar{\eta}$. We wish to the Minty
parametrization of a maximal monotone operator. Consider $y^{\ast }\left(
x\right) =\langle x,z\rangle $ and the problem of minimizing $g\left(
y\right) -\langle y,z\rangle $. The optimality conditions are 
\begin{eqnarray}
0 &\in &\partial f\left( y\right) +\eta \left( y-x^{0}\right) -z  \notag \\
&=&\left[ \partial f+\eta I\right] \left( y\right) -\left( \eta
x^{0}+z\right)  \notag \\
\text{or\qquad }y &\in &\left[ \partial f+\eta I\right] ^{-1}\left( \eta %
\left[ x^{0}+\frac{1}{\eta }z\right] \right) .  \label{neqn:123}
\end{eqnarray}%
As $z$ is arbitrary we may choose $x^{0}+\frac{1}{\eta }z_{x}=x\in \text{lev}%
_{x^{0}}f$ or $z_{x}=\eta \left( x-x^{0}\right) $. The optimality conditions
then become 
\begin{equation*}
y\in \left[ \partial f+\eta I\right] ^{-1}\left( \eta x\right) .
\end{equation*}%
On the other hand consider the problem $y\in P_{\lambda }\left( x\right) \ $%
which has optimality conditions 
\begin{eqnarray}
0 &\in &\partial f\left( y\right) +\frac{1}{\lambda }\left( y-x\right) 
\notag \\
\text{or\qquad }y &\in &\left[ \partial f+\frac{1}{\lambda }I\right]
^{-1}\left( \frac{1}{\lambda }x\right) .  \label{neqn:567}
\end{eqnarray}%
We may then see the correspondence under the identification $\eta =\frac{1}{%
\lambda }$. We may rewrite (\ref{neqn:567}) as 
\begin{eqnarray}
y &\in &\left[ \partial g+\left( \eta -\bar{\eta}\right) I\right]
^{-1}\left( \eta \left( x-x^{0}\right) \right)  \label{neqn:234} \\
&=&\left[ \partial g+\left( \eta -\bar{\eta}\right) I\right] ^{-1}\left(
z_{x}\right)  \notag
\end{eqnarray}

By \eqref{lem:bound} there we may take $\eta $ larger if needed to
ensure for all $x\in B_{\delta _{\eta }}\left( \bar{x}\right) $ and any $%
\bar{x}$ $\in \text{lev}_{x^{0}}f$ we have $y\in P_{\frac{1}{\eta }}\left(
x\right) $ contained in $B_{\varepsilon _{\eta }}\left( \bar{x}\right)
\supseteq B_{\delta _{\eta }}\left( \bar{x}\right) $ where $g\left( x\right)
:=f\left( x\right) +\frac{\eta }{2}\left\Vert x-x^{0}\right\Vert ^{2}$ is
convex on $B_{\delta }\left( \bar{x}\right) \supseteq B_{\varepsilon _{\eta
}}\left( x_{i}\right) .$ Assuming $\text{lev}_{x^{0}}f$ is bounded we may
extract a finite sub-cover of the open cover 
\begin{equation*}
\left\{ B_{\delta _{\eta }}\left( \bar{x}\right) \mid \bar{x}\in \text{lev}%
_{x^{0}}f\right\} .
\end{equation*}%
Let 
\begin{equation*}
\left\{ B_{\delta _{i}}\left( x_{i}\right) \mid i=1,\dots ,k\right\} 
\end{equation*}%
be this finite cover. Then for any $x\in \text{lev}_{x^{0}}f$ we have $x\in
B_{\delta _{i}}\left( x_{i}\right) $ and hence $y\in P_{\frac{1}{\eta }%
}\left( x\right) $ is contained in some $B_{\varepsilon _{i}}\left(
x_{i}\right) \supseteq B_{\delta _{i}}\left( x_{i}\right) $ on which $g$ is
convex. Then (\ref{neqn:234}) applies so via Minty's resolvant theorem 
\begin{equation}
y\in T\left( \eta \left( x-x^{0}\right) \right) :=\left[ \partial g+\left(
\eta -\bar{\eta}\right) I\right] ^{-1}\left( \eta \left( x-x^{0}\right)
\right)   \label{neqn:345}
\end{equation}%
where $T\left( x\right) :=\left[ \partial g+\left( \eta -\bar{\eta}\right) I%
\right] ^{-1}\left( \eta \left( x-x^{0}\right) \right) $ is a single valued
maximal monotone and nonexpansive when $\partial g$ is maximal monotone, see 
\cite[Theorem 12.15]{Rockafellar1998}.
\begin{theorem}\label{conv:th1}
Suppose $f$ is prox-regular and locally Lipschitz on a bounded level set $%
\text{lev}_{ x^0}f$ with $\text{int}$ $\text{lev}_{ x^0}f\neq \emptyset $.
Let $g\left( y;x,a\right)$ be defined in \eqref{g} with $a \geq 0$. There exists an $a^{th}$ and a globally convex function $H(y;x,a)$ satisfying $g(y;x,a)\geq H(y;x,a)$ for all $y\in\mathbbm{R}^n$, $a\geq a^{th}$ and $x\in \text{lev}_{x^0}f$, such that $%
g(y;x,a)$ is the restriction to $\text{lev}_{ x^0}f$ of $H(y;x,a)$.
\end{theorem}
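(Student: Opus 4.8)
The plan is to reduce the theorem to the sufficient condition built into Corollary \ref{cor:convex}---uniqueness of the minimizer of a supported linear perturbation---applied to a single fixed convexification, and then to transfer the resulting convex minorant to the general $g(y;x,a)$ by a linear shift. First I would work with the function anchored at $x^0$, namely $g(\cdot;x^0,\eta)$ with $\eta$ chosen at least as large as the local-convexity threshold $\bar\eta$ furnished by Lemma \ref{convexific}. On the compact set $D:=\text{lev}_{x^0}f$ the function $g(\cdot;x^0,\eta)$ is continuous, real-valued and bounded below (by compactness together with local Lipschitz continuity of $f$), and $\operatorname{int} D\neq\emptyset$ by hypothesis, so the structural assumptions of Corollary \ref{cor:convex} hold.

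The crux is to verify the sufficient condition of Corollary \ref{cor:convex}: for every $y^*\in B_1(0)$ for which $(-y^*,1)$ supports $\operatorname{epi} g(\cdot;x^0,\eta)$, the inclusion $0\in\partial[\,g(\cdot;x^0,\eta)-y^*\,](y)$ has a unique solution (by Lemma \ref{lem:4} this is the same as strong exposedness of the corresponding boundary point). Here I would invoke the Minty parametrization developed immediately above: the optimality condition for minimizing $g(y;x^0,\eta)-\langle y,z\rangle$ identifies its solution with the resolvent value $[\partial f+\eta I]^{-1}(\eta x^0+z)$, i.e. with a proximal point of $f$, and by the a priori bound \eqref{lem:bound} one may enlarge $\eta$ so that every such proximal point lies in one of the finitely many balls of the subcover on which $g(\cdot;x^0,\eta)$ is convex. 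On each such ball $\partial g(\cdot;x^0,\eta)$ is maximal monotone, so by Minty's resolvent theorem the map $T$ of \eqref{neqn:345} is single-valued; this single-valuedness is exactly the required uniqueness. Corollary \ref{cor:convex} then gives $g(x;x^0,\eta)=\operatorname{co} g(x;x^0,\eta)$ for all $x\in D$, so $H_0:=\operatorname{co} g(\cdot;x^0,\eta)$ is globally convex, coincides with $g(\cdot;x^0,\eta)$ on $D$, and satisfies $H_0\le g(\cdot;x^0,\eta)$ on all of $\mathbbm{R}^n$.

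Finally I would pass to the general parameters as in the proof of Lemma \ref{convexific}. Since
\[
g(y;x,a)-g(y;x^0,\eta)=\tfrac{a}{2}\|y-x\|^2-\tfrac{\eta}{2}\|y-x^0\|^2
\]
has Hessian $(a-\eta)I$ in $y$, it is convex as soon as $a\geq\eta$. Setting $a^{th}:=\eta$ and
\[
H(y;x,a):=H_0(y)+\tfrac{a}{2}\|y-x\|^2-\tfrac{\eta}{2}\|y-x^0\|^2,
\]
the function $H(\cdot;x,a)$ is then globally convex, satisfies $H(\cdot;x,a)\le g(\cdot;x,a)$ everywhere (because $H_0\le g(\cdot;x^0,\eta)$) and agrees with $g(\cdot;x,a)$ on $D$ (because $H_0=g(\cdot;x^0,\eta)$ there), uniformly for $x\in\text{lev}_{x^0}f$ and $a\geq a^{th}$.

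I expect the crux step to be the main obstacle, because $g(\cdot;x^0,\eta)$ is convex only in a neighborhood of $D$ rather than globally, so $\partial g(\cdot;x^0,\eta)$ cannot be taken to be monotone on all of $\mathbbm{R}^n$; the bound \eqref{lem:bound} is exactly what confines the resolvent to the convex region so that Minty's theorem applies. The remaining delicate point is to match the normalization $y^*\in B_1(0)$ of the supporting directions required by Corollary \ref{cor:convex} with the range of $z$ over which the single-valuedness of the resolvent has been established.
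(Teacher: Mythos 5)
Your proposal follows essentially the same route as the paper's own proof: apply Corollary \ref{cor:convex} to the anchored function $g(\cdot;x^0,\eta)$ on $D=\text{lev}_{x^0}f$, verifying its uniqueness/exposedness hypothesis (via Lemma \ref{lem:4}) through the Minty parametrization, the a priori proximal bound \eqref{lem:bound}, and the finite subcover on which $g$ is locally convex, so that the resolvent $T$ of \eqref{neqn:345} is single-valued by maximal monotonicity. The only difference is that you make explicit the final transfer from $g(\cdot;x^0,\eta)$ to general $g(\cdot;x,a)$ with $a\geq a^{th}:=\eta$ via the convex quadratic shift $H(y;x,a)=H_0(y)+\frac{a}{2}\|y-x\|^2-\frac{\eta}{2}\|y-x^0\|^2$ --- a step the paper leaves implicit (it is the same shift argument used in Lemma \ref{convexific}) --- so your write-up is, if anything, slightly more complete than the paper's.
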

\begin{proof}
We wish to apply Corollary \ref{cor:convex}. We take $D=\text{lev}_{x^{0}}f$
and $g:D\rightarrow \mathbf{R}$ is Lipschitz and prox-regular on the bounded
domain $D$. We need to show that $0\in \partial \left[ g-\langle z,\cdot
\rangle \right] \left( y\right) $ has a unique solution for all $y^{\ast
}\in B_{1}\left( 0\right) $ such that $\left( -z,1\right) $ supports $\text{%
epi }g$ at some $x\in D.$ To this end take $x\in D$ and let $z_{x}=\eta
\left( x-x^{0}\right) $ so that $g\left( y\right) -\langle y,z\rangle $
attains its minimum at $y$ when (\ref{neqn:345}) holds. But by construction
we have $x\in B_{\delta _{i}}\left( x_{i}\right) $ and hence $y\in P_{\frac{1%
}{\eta }}\left( x\right) $ is contained in some $B_{\varepsilon _{i}}\left(
x_{i}\right) \supseteq B_{\delta _{i}}\left( x_{i}\right) $ on which $g$ is
convex. Hence the operator $\ T\left( x\right) :=\left[ \partial g+\left(
\eta -\bar{\eta}\right) I\right] ^{-1}\left( \eta \left( x-x^{0}\right)
\right) $ will have $B_{\delta _{i}}\left( x_{i}\right) $ in its domain and $%
B_{\varepsilon _{i}}\left( x_{i}\right) $ in its range, all contained in a
region on which $\partial g$ is locally a maximal monotone operator (as $g$
is locally convex). Thus $y=T\left( x\right) $ is unique by \cite[Theorem
12.15]{Rockafellar1998}. Thus $\left( x,g\left( x\right) \right) $ is exposed by $%
\left( -z_{x},1\right) $ and Corollary \ref{cor:convex} applies.
\end{proof}
\end{document}